\theoremstyle{plain}
\newtheorem{theorem}{Theorem}[section]
\newtheorem{lemma}[theorem]{Lemma}
\newtheorem{proposition}[theorem]{Proposition}
\newtheorem{corollary}[theorem]{Corollary}
\newtheorem{conjecture}[theorem]{Conjecture}
\theoremstyle{definition}
\newtheorem{definition}[theorem]{Definition}
\newtheorem{claim}{Claim}[theorem]
\newtheorem{subclaim}{}[claim]
\newtheorem{remark}[theorem]{Remark}
\newcommand{\mcal}[1]{\ensuremath{\mathcal{#1}}}
\newcommand{\mbb}[1]{\ensuremath{\mathbb{#1}}}
\newcommand{\closure}[1]{\ensuremath{\langle #1 \rangle}}
\newcommand{\formula}[1]{\text{\fontfamily{cmss}\selectfont \textup{#1}}}
\newcommand{\dash}{\nobreakdash-\hspace{0pt}}
\newcommand{\cl}{\operatorname{cl}}
\newcommand{\bw}{\operatorname{bw}}
\newcommand{\dw}{\operatorname{dw}}
\newcommand{\ind}{\formula{Ind}}
\newcommand{\sing}{\formula{Sing}}
\newcommand{\vertex}{\operatorname{Vert}}
\newcommand{\edge}{\operatorname{Edge}}
\newcommand{\enc}{\operatorname{enc}}
\newcommand{\cmso}{\ensuremath{\mathit{CMS}_{0}}}
\newcommand{\cmstwo}{\ensuremath{\mathit{CMS}_{2}}}
\newcommand{\hgg}{$H$\dash gain-graphic}
\newcommand{\ba}{\backslash}
\title[Tree automata and matroids]
{Tree automata and pigeonhole classes of matroids: II}
\author[Funk]{Daryl Funk}
\address{Department of Mathematics,
Douglas College,
British Columbia,
Canada}
\email{funkd@douglascollege.ca}
\author[Mayhew]{Dillon Mayhew}
\address{School of Mathematics and Statistics,
Te Herenga Waka,
Victoria University of Wellington,
New Zealand}
\email{dillon.mayhew@vuw.ac.nz}
\author[Newman]{Mike Newman}
\address{Department of Mathematics and Statistics,
University of Ottawa,
Ontario,
Canada}
\email{mnewman@uottawa.ca}
\date{\today}
\begin{document}

\begin{abstract}
Let $\psi$ be a sentence in the counting monadic second-order logic
of matroids and let \mbb{F} be a finite field.
Hlin\v{e}n\'{y}'s Theorem says that we can test whether
\mbb{F}-representable matroids satisfy $\psi$ using an algorithm that is
fixed-parameter tractable with respect to branch-width.
In a previous paper we proved there is a similar
fixed-parameter tractable algorithm that can test the members of any
efficiently pigeonhole class.
In this sequel we apply results from the first paper and
thereby extend Hlin\v{e}n\'{y}'s Theorem to the classes of
fundamental transversal matroids, lattice path matroids,
bicircular matroids, and \hgg\ matroids, when $H$ is a finite group.
As a consequence, we can obtain a new proof of
Courcelle's Theorem.
\end{abstract}

\maketitle

\section{Introduction}

In the first paper of the series \cite{FMN-I}, we proved an extension of
Hlin\v{e}n\'{y}'s Theorem \cite{Hli06c}.
This theorem concerns the \emph{counting monadic second-order logic} for matroids, \cmso.
In this language we have variables $X_{1}, X_{2}, X_{3},\ldots$ representing subsets
of the ground set of a matroid.
We have a binary predicate $X_{i}\subseteq X_{j}$, which allows us to say
when one subset is contained in another.
We also have a unary independence predicate $\ind(X_{i})$, which
returns the value true when the input is an independent subset.
We further have predicates that allow us to say that a
set has cardinality $p$ modulo $q$, where $p$ and $q$ are positive integers.
Let $\psi$ be a sentence in \cmso.
Hlin\v{e}n\'{y}'s Theorem says that there is a fixed-parameter tractable
algorithm for testing whether matroids satisfy $\psi$, as long
as the input class consists of matroids representable over a finite field \mbb{F}.
A \emph{fixed-parameter tractable} algorithm
typically includes a numerical parameter, $\lambda$, and the
running time is bounded by $O(f(\lambda)n^{c})$, where $n$ is the size of the input,
$c$ is a constant, and $f(\lambda)$ is a value that depends only on $\lambda$.
In the case of Hlin\v{e}n\'{y}'s Theorem, the parameter is the
branch-width of the input matroid.
Thus the theorem provides us with a polynomial-time algorithm when
the input class is restricted to \mbb{F}\dash representable matroids
of bounded branch-width.
The main result of \cite{FMN-I} extends Hlin\v{e}n\'{y}'s Theorem.

\begin{theorem}
\label{lounge-II}
Let \mcal{M} be an efficiently pigeonhole class of matroids.
Let $\psi$ be a sentence in \cmso.
We can test whether matroids in \mcal{M} satisfy $\psi$ using an algorithm that is fixed-parameter tractable with respect to branch-width.
\end{theorem}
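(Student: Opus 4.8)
The plan is to reduce the question to running a fixed finite bottom-up tree automaton on a labelled tree that encodes $M$ together with a branch-decomposition of it, following the route by which one obtains Courcelle's theorem for graphs and Hlin\v{e}n\'{y}'s theorem for $\mbb{F}$-representable matroids. The only facts about \mcal{M} that enter the argument are those packaged in the pigeonhole hypothesis: pigeonhole-ness will ensure that the encoding lives over a \emph{finite} alphabet whose size is bounded in terms of the branch-width parameter alone, and efficient pigeonhole-ness will ensure that the encoding can be produced in polynomial time.

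\emph{The encoding.} Let $M\in\mcal{M}$ have branch-width at most $\lambda$. We may assume, as part of the input or by a preprocessing step, that a branch-decomposition of $M$ of width at most $g(\lambda)$ is available, for some fixed function $g$; root it and subdivide it to obtain a rooted binary tree $T$ whose leaves are in bijection with $E(M)$. For a node $v$ write $E_{v}$ for the set of ground-set elements appearing at the leaves below $v$, so that the displayed separation $(E_{v},\,E(M)\setminus E_{v})$ has order at most $g(\lambda)$. Label $v$ by the \emph{boundary type} of $E_{v}$ --- an equivalence class, under a suitable notion of interchangeability, of the ``boundaried'' matroid obtained by cutting $M$ along this separation, whose boundary has ``size'' at most $g(\lambda)$ --- together with the bounded positional data relating the boundary at $v$ to the boundaries at its two children. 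By the pigeonhole hypothesis the number of boundary types arising at separations of order at most $g(\lambda)$, over matroids in \mcal{M}, is bounded by a function $N(\lambda)$, so all labels lie in a fixed finite alphabet $\Sigma_{\lambda}$; by efficient pigeonhole-ness they can all be computed in time polynomial in $n:=|E(M)|$.

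\emph{From matroid logic to tree logic to a tree automaton.} Next I would construct, from $\psi$, a sentence $\widehat{\psi}$ in the counting monadic second-order logic of $\Sigma_{\lambda}$-labelled trees such that $M\models\psi$ exactly when $(T,\text{labels})\models\widehat{\psi}$. The bijection between $E(M)$ and the leaves of $T$ turns quantification over subsets of $E(M)$ into quantification over subsets of the leaf set; the containment predicate transfers literally; and the modular-cardinality predicates transfer literally, since a set of leaves has the same cardinality as the corresponding set of ground-set elements and \cmso\ on trees carries cardinality-mod predicates. The one predicate requiring work is the independence predicate $\indep$: for $S\subseteq E(M)$, I would show that whether $S$ is independent in $M$ is decided by a single frontier-to-root pass over $T$ that maintains at each node $v$ only (a) the \emph{trace} of $S\cap E_{v}$ on the boundary of $E_{v}$, a bounded amount of data because the boundary has ``size'' at most $g(\lambda)$, and (b) one bit recording whether $S\cap E_{v}$ is itself independent in $M$. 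The update at an internal node $u$ with children $v_{1},v_{2}$ reads off, from the label at $u$ and the two traces, the local connectivity of $S\cap E_{v_{1}}$ and $S\cap E_{v_{2}}$ --- a bounded non-negative integer, since $S\cap E_{v_{1}}\subseteq E_{v_{1}}$ and $S\cap E_{v_{2}}\subseteq E_{v_{2}}$ and local connectivity is monotone --- and then sets the new trace and the new bit, the latter being the conjunction of the bits at $v_{1}$ and $v_{2}$ with the condition that this local connectivity vanishes; at the root the bit is exactly the answer to ``is $S$ independent?''. A property decided by a bounded frontier-to-root procedure of this kind is definable in \cmso\ over labelled trees --- equivalently, recognised by a finite deterministic bottom-up automaton reading the product of $\Sigma_{\lambda}$ with the marking of $S$ --- so one obtains a \cmso\ formula to substitute for $\indep$, completing $\widehat{\psi}$. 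Finally, by the Doner--Thatcher--Wright theorem (for finite trees, definability in counting monadic second-order logic coincides with recognisability by a finite tree automaton) there is a finite bottom-up tree automaton $\mcal{A}$ recognising exactly the $\Sigma_{\lambda}$-labelled trees that satisfy $\widehat{\psi}$, with $|\mcal{A}|$ depending only on $\psi$ and $\lambda$. The algorithm is then: build $T$ with its labelling in time $O(f_{1}(\lambda)\,n^{c})$; run $\mcal{A}$ on it in time $O(|\mcal{A}|\cdot|T|)=O(f_{2}(\psi,\lambda)\,n)$; accept exactly when $\mcal{A}$ accepts. This is fixed-parameter tractable with branch-width as the parameter.

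\emph{The main obstacle.} The technical heart is to pin down ``boundary type'', and the ``trace'' derived from it, so that all of the following hold at once: it takes only finitely many values at separations of bounded order over matroids in \mcal{M} --- this is precisely where the pigeonhole hypothesis is used, and for the specific classes treated later one must of course verify separately that they are efficiently pigeonhole, but that verification belongs to the applications; the type of a node is determined by the types of its children together with a bounded amount of gluing data, i.e.\ a composition lemma holds; and the type is fine enough that the trace updates above are well defined and, with the running independence bit, genuinely compute independence. Proving this composition lemma, and exhibiting a representation of boundary types concrete enough that the decomposition can be decorated in polynomial time, is the substantive part of the work; granting it, the passage to tree logic and the appeal to tree automata are routine.
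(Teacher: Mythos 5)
Your proposal follows essentially the same route that the paper (and its companion \cite{FMN-I}, where this theorem is actually proved) takes: encode $M$ as a $\Sigma$\dash labelled subcubic tree --- the paper's \emph{parse tree} of Section~\ref{automatic-II} --- with finiteness of $\Sigma$ guaranteed by the pigeonhole hypothesis and polynomial-time computability of the labels by the efficiently pigeonhole hypothesis, then run a finite tree automaton derived from $\psi$. Three remarks on matters of emphasis rather than correctness. First, where you gesture at a ``boundary type'' and a ``trace on the boundary,'' the paper's precise substitute is the class of $S\cap E_{v}$ under $\sim_{E_{v}}$ (in practice under the computable refinement $\approx_{E_{v}}$); for an abstract pigeonhole class there is no geometric ``boundary'' to trace, and $\sim_{U}$ is exactly the right invariant, as your own fallback description of the automaton makes clear. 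Second, the composition lemma you single out as the technical heart --- that the $\sim_{E_{v}}$\dash class of $X$ is determined by the $\sim$\dash classes of $X\cap E_{v_{1}}$ and $X\cap E_{v_{2}}$ --- is in fact immediate from Definition~\ref{fracas-II}: if $X_{1}\sim_{U_{1}}X_{1}'$ and $X_{2}\sim_{U_{2}}X_{2}'$ then, taking $Z$ in the definition to include the other half, one gets $X_{1}\cup X_{2}\sim_{U_{1}\cup U_{2}}X_{1}'\cup X_{2}'$. The genuine delicacy lies one layer deeper: the computable relation $\approx_{U}$ is only a refinement of $\sim_{U}$ and need not itself compose, so the automaton's run does not literally track $\approx$\dash classes; instead one fixes representatives and shows by induction that the representative recorded at $v$ stays $\sim_{E_{v}}$\dash equivalent to $S\cap E_{v}$, which is all that matters at the root. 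Third, your detour through \cmso\ on labelled trees and a Doner--Thatcher--Wright-type recognisability theorem is an equivalent repackaging of the direct automaton construction the paper uses; both yield the same fixed-parameter algorithm.
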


This sequel paper exploits \Cref{lounge-II} and related ideas to show that
there are fixed-parameter tractable algorithms for testing \cmso\ sentences
in other natural classes of matroids, beyond finite-field representable matroids.
In particular, our main theorem shows that Hlin\v{e}n\'{y}'s Theorem can be extended as
follows.

\begin{theorem}
\label{hammer}
Let \mcal{M} be any of the following:
\begin{enumerate}[label=\textup{(\roman*)}]
\item the class of fundamental transversal matroids,
\item the class of lattice path matroids,
\item the class of bicircular matroids, or
\item the class of \hgg\ matroids, where $H$ is a finite group.
\end{enumerate}
Let $\psi$ be a sentence in \cmso.
We can test whether matroids in \mcal{M} satisfy $\psi$ using an algorithm that is fixed-parameter tractable with respect to branch-width.
\end{theorem}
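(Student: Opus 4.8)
The plan is to derive \Cref{hammer} directly from \Cref{lounge-II}: it suffices to show that each of the four classes listed in \textup{(i)}--\textup{(iv)} is an efficiently pigeonhole class of matroids, and then invoke \Cref{lounge-II}. So the work splits into four separate verifications, all following the same template. For a class \mcal{M} and a positive integer $t$ we must bound, in terms of $t$ alone, the number of $t$\dash boundaried matroids drawn from \mcal{M} up to the congruence used in the tree-automaton construction of \cite{FMN-I}, and we must supply polynomial-time procedures for the associated bookkeeping: recognising membership in the class, computing the congruence class of a given boundaried matroid, and composing boundaried matroids. The finiteness statement is the heart of each case; efficiency will follow in every case because the matroids in question are presented by small combinatorial gadgets --- a bipartite graph, a pair of lattice paths, a graph, or an $H$\dash gain graph --- on which all the required operations are routine. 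Conceptually this is the same phenomenon that drives Hlin\v{e}n\'{y}'s Theorem, where an $\mbb{F}$\dash representable matroid of bounded branch-width is pinned down at each node of a branch-decomposition by a subspace of a bounded-dimensional space over the finite field $\mbb{F}$, of which there are only finitely many; the four classes above each admit an analogous ``finite-alphabet'' description.

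For the frame-matroid cases \textup{(iii)} and \textup{(iv)} I would argue as follows. Bicircular matroids are the frame matroids of graphs, and \hgg\ matroids are the frame matroids of $H$\dash gain graphs; in both cases a branch-decomposition of the matroid of width at most $k$ can be turned into a decomposition of the underlying (gain) graph in which every displayed separation has order $O(k)$, using the known comparisons between the connectivity function of a frame matroid and that of its biased or gain graph. Hence a $t$\dash boundaried matroid from the class corresponds to a (gain) graph whose boundary is carried by $O(t)$ vertices. For bicircular matroids the number of resulting boundary traces is finite simply because a graph on a bounded vertex set, together with a marking of which of its vertices are boundary vertices, has boundedly many isomorphism types; for \hgg\ matroids the same conclusion holds because the edge gains lie in the \emph{finite} group $H$, so again only finitely many traces can occur. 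Recognition and the congruence computation reduce to standard graph and gain-graph algorithms, so both classes are efficiently pigeonhole.

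For the transversal cases \textup{(i)} and \textup{(ii)} the same template applies, but the combinatorial control is more delicate. A fundamental transversal matroid is determined by a bipartite graph between a distinguished basis $B$ and its complement $E\setminus B$ --- equivalently, by the family of fundamental circuits with respect to $B$ --- while a lattice path matroid is determined by a pair of monotone lattice paths. In each case I would show that a width-$k$ branch-decomposition of the matroid induces a decomposition of this presentation in which the portion of the bipartite graph, respectively of the lattice-path data, crossing any displayed separation is controlled by a function of $k$, so that only finitely many boundary traces arise, and that the bookkeeping is polynomial-time. Lattice path matroids are the easier of the two: their rigid interval structure means a bounded-order separation interacts with only boundedly many steps of the two paths, and finiteness is almost immediate. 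Fundamental transversal matroids are where I expect the main obstacle to lie, since transversal matroids in general behave badly under minors and under branch-decompositions; the key technical input will be a structural description of how a (fundamental) presentation restricts across a low-order separation --- in essence, that only boundedly many sets of the presentation can straddle the separation --- after which both the finiteness of boundary traces and the polynomial-time bookkeeping fall out. Once all four classes have been shown to be efficiently pigeonhole, \Cref{lounge-II} yields \Cref{hammer}.
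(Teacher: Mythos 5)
Your plan is to show that each of the four classes is efficiently pigeonhole and then invoke \Cref{lounge-II}, but this is not what the paper does, and in fact the paper leaves this strategy open as a conjecture for three of the four cases. Only case \textup{(i)}, fundamental transversal matroids, is handled by proving the class efficiently pigeonhole (\Cref{yuppie}) and applying \Cref{lounge-II}. For lattice path matroids, the paper proves only the pigeonhole property (\Cref{zodiac}) and then bypasses \Cref{lounge-II} entirely by constructing a parse tree directly (\Cref{copeck}) and invoking a different result from \cite{FMN-I}; whether lattice path matroids are efficiently pigeonhole is explicitly stated as \Cref{ulster}, an open conjecture. For bicircular and \hgg\ matroids the paper proves only that the \emph{$3$\dash connected} members form an efficiently pigeonhole class (\Cref{nibble}) and then uses a version of the main theorem from \cite{FMN-I} that requires only $3$\dash connected efficiently pigeonhole plus a minor-compatible succinct representation (\Cref{lichen}); whether the full classes are efficiently pigeonhole is again left open as \Cref{hubbub}.

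There is also a concrete false step in your argument for cases \textup{(iii)} and \textup{(iv)}. You assert that a width\dash $k$ branch decomposition of a bicircular or \hgg\ matroid induces a decomposition of the underlying (gain) graph in which every displayed separation has order $O(k)$ in the graph. This fails without a $3$\dash connectivity hypothesis: \Cref{adagio} exhibits a $2$\dash connected biased graph whose frame matroid has a $1$\dash separation $(U,V)$ for which there is no bound at all on the number of vertices lying in both $G[U]$ and $G[V]$. The bound you need is precisely \Cref{rouble}, and its proof uses $3$\dash connectivity in an essential way (it appeals repeatedly to the absence of coloops, $1$\dash separations, $2$\dash separations and series pairs). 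Consequently your finiteness claim for boundary traces, and hence your reduction of \textup{(iii)} and \textup{(iv)} to \Cref{lounge-II}, does not go through. A correct proof must either restrict attention to $3$\dash connected members and use the $3$\dash connected variant of \Cref{lounge-II} from \cite{FMN-I} together with minor-compatibility of the graph or gain-graph representation, as the paper does, or else prove \Cref{hubbub}, which is not known.

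Finally, you treat case \textup{(i)} as ``where I expect the main obstacle to lie'' and cases \textup{(iii)}/\textup{(iv)} as straightforward, which has the difficulty inverted: fundamental transversal matroids are the one case where the direct efficiently-pigeonhole argument actually works (via a matching/vertex-cover analysis using K\H{o}nig's Theorem), while the frame-matroid cases are exactly where the direct approach breaks down because of low-connectivity separations.
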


We now explain efficiently pigeonhole matroid classes,
along with some other associated concepts.
Formal definitions are reserved for \Cref{pigeonhole-II}.
Imagine that $M$ is a matroid, and that $U$ is a subset of $E(M)$.
We define an equivalence relation on the subsets of $U$.
Let $X$ and $X'$ be subsets of $U$.
Assume that $X\cup Z$ is independent if and only if
$X'\cup Z$ is independent, for any subset $Z\subseteq E(M)-U$.
We think of this as indicating that no subset of $E(M)-U$ can
distinguish between $X$ and $X'$.
In this case we write $X\sim_{U} X'$.
We put the elements of $E(M)$ into correspondence with the
leaves of an appropriately chosen subcubic tree.
If there are at most $q$ equivalence classes under $\sim_{U}$
whenever $U$ is a set displayed by an edge of the tree, then
the \emph{decomposition-width} of $M$ is at most $q$.
This notion of decomposition-width is equivalent to that used by
Kr\'{a}l \cite{Kra12} and by Strozecki \cite{Str10,Str11}, in the sense that the decomposition-width of a class is bounded under one definition if and only if it is bounded under the other.

A class of matroids with bounded decomposition-width must
have bounded branch-width \cite[Corollary 2.8]{FMN-I}.
%whaler
The converse does not hold (\Cref{apogee}).
Let \mcal{M} be a class of matroids, and assume that every
subclass of \mcal{M} with bounded branch-width also has
bounded decomposition-width.
Then we say that \mcal{M} is a \emph{pigeonhole} class of matroids.
This is the case if and only if the dual class is pigeonhole (\cite[Corollary 5.3]{FMN-I}).
%dynamo
The class of \mbb{F}\dash representable matroids forms a pigeonhole
class if and only if \mbb{F} is finite (\Cref{rhesus} and \Cref{yippie}).
Fundamental transversal matroids (\Cref{yuppie}) and lattice path matroids
also form pigeonhole classes (\Cref{dinner}).

A stronger property arises quite naturally.
Imagine that \mcal{M} is a class of matroids and that $M$ is an arbitrary matroid
in \mcal{M}.
Assume that whenever $U\subseteq E(M)$ and $\lambda_{M}(U) \leq k$, then there
are at most $\pi(k)$ equivalence classes under $\sim_{U}$, where $\pi(k)$ is a value depending only on $k$.
(Recall that $\lambda_{M}(U)$ is the connectivity value of $U$ and is defined to be
$r_{M}(U)+r_{M}(V)-r(M)$.)
In this case we say that \mcal{M} is \emph{strongly pigeonhole} (\Cref{fizzer-II}).
Every strongly pigeonhole class is pigeonhole \cite[Proposition 2.11]{FMN-I},
%fedora
but the converse does not hold \cite[Remark 2.12]{FMN-I}.
%effect
The class of fundamental transversal matroids is strongly
pigeonhole, and so is the class of \mbb{F}\dash representable matroids
when \mbb{F} is finite (\Cref{rhesus}).
We do not know if any of the other classes in \Cref{hammer}
are strongly pigeonhole, but we certainly believe this to be the case
(Conjectures \ref{ulster} and \ref{hubbub}).
In fact, we make the broad conjecture that
the class of matroids that are transversal and cotransversal is
a strongly pigeonhole class (\Cref{iguana}).

\Cref{lounge-II} relies on tree automata to test the sentence $\psi$,
as does Hlin\v{e}n\'{y}'s Theorem.
These machines are described in \Cref{automatic-II}.
In order to construct a parse tree for the machine to process,
we require a further strengthening of the pigeonhole property.
It is not enough that there is a bound on the number of classes
under $\sim_{U}$:
we must be able to compute this equivalence relation efficiently.
In fact, we are happy to compute a refinement of
$\sim_{U}$, as long as this refinement does not have too many
classes.
If we are able to do this, then we say that the class is
\emph{efficiently pigeonhole} (see \Cref{yakuza-II} for the formal statement).
Any efficiently pigeonhole class is also strongly pigeonhole.
The class of matroids representable over a finite field
(\Cref{rhesus}) is efficiently pigeonhole,
and this leads to a proof of Hlin\v{e}n\'{y}'s Theorem.
The class of fundamental transversal matroids
is also efficiently pigeonhole (\Cref{yuppie}).

In \cite[Theorem 6.11]{FMN-I} we also proved that there is a
fixed-parameter-tractable algorithm for testing $\psi$ in members of
\mcal{M} as long as the $3$\dash connected members of \mcal{M}
form an efficiently pigeonhole class and we can efficiently construct
descriptions of minors.
This result was motivated by the fact that we do not know if
bicircular matroids or \hgg\ matroids ($H$ finite) form
efficiently pigeonhole classes.
(We conjecture this is the case in \Cref{hubbub}.)
We have been able to show that the $3$\dash connected bicircular matroids
and the $3$\dash connected \hgg\ matroids form efficiently pigeonhole classes
(\Cref{nibble}).
This is then enough to establish cases (iii) and (iv) in \Cref{hammer}.

Knowing that we have efficient model-checking for bicircular
matroids gives us a new, and quite simple, proof of
Courcelle's Theorem (\Cref{fodder}), which states that there is
a fixed-parameter tractable algorithm for testing monadic second-order
sentences in graphs, relative to the parameter of tree-width.

As well as proving positive results, we establish some
negative propositions.
Any class of matroids that contains the rank\dash $3$ uniform matroids
and is closed under principal extensions is not pigeonhole
(\Cref{nubbin}).
Thus matroids representable over a given infinite field form
a non-pigeonhole class (\Cref{yippie}).
The class of transversal matroids is not pigeonhole,
(\Cref{hiccup}) and nor is the class of gammoids (\Cref{yellow}).
A different argument shows that
the class of \hgg\ matroids is not pigeonhole when $H$ is infinite
(\Cref{object}).

For an introduction to monadic second-order logic and its application to finite
structures, see  \cite{CE12} or \cite[Chapter 13]{DF13}.
Oxley provides our reference for the
basic concepts of matroid theory \cite{Oxl11}.
We have noted that if $M$ is a matroid and $(U,V)$ is a partition of $E(M)$, then
the \emph{connectivity value} of $U$ is
$\lambda_{M}(U)=r_{M}(U)+r_{M}(V)-r(M)$.
A \emph{$k$\dash separation} is a partition, $(U,V)$, of the
ground set such that $|U|,|V|\geq k$, and $\lambda_{M}(U)<k$.
A matroid is \emph{$n$\dash connected} if it has no
$k$\dash separations with $k<n$.
A \emph{cyclic flat} of the matroid $M$ is a flat $F$ such that the
restriction $M|F$ has no coloops.
When $E$ is a finite set and \mcal{I} is a collection of subsets of $E$, we refer to the
pair $(E,\mcal{I})$ as a \emph{set-system}.
We say that the members of \mcal{I} are \emph{independent sets}.

\section{Tree automata}
\label{automatic-II}

\begin{definition}
\label{sitcom-II}
Let $T$ be a tree with a distinguished \emph{root} vertex, $t$.
Assume that every vertex of $T$ other than $t$ has degree one or three,
and that if $T$ has more than one vertex, then $t$ has degree two (so that $T$ cannot have exactly two vertices).
The \emph{leaves} of $T$ are the degree-one vertices.
In the case that $t$ is the only vertex, we also consider $t$ to be a leaf.
Let $L(T)$ be the set of leaves of $T$.
If $T$ has more than one vertex, and $v$ is a non-leaf, then $v$ is adjacent
with two vertices that are not contained in the path from $v$ to $t$.
These two vertices are the \emph{children} of $v$.
We distinguish the \emph{left} child and the \emph{right} child of $v$.
Now let $\Sigma$ be a finite \emph{alphabet} of \emph{characters}.
Let $\sigma$ be a function from $V(T)$ to $\Sigma$.
Under these circumstances we say that $(T,\sigma)$ is a \emph{$\Sigma$\dash tree}.
\end{definition}

\begin{definition}
\label{nuance-II}
A \emph{tree automaton} is a tuple  $(\Sigma, Q, F, \delta_{0},\delta_{2})$,
where $\Sigma$ is a finite alphabet, and $Q$ is a finite set of \emph{states}.
The set of \emph{accepting states} is a subset $F\subseteq Q$.
The  \emph{transition rules}, $\delta_{0}$ and $\delta_{2}$, are partial functions
from $\Sigma$ and $\Sigma\times Q\times Q$ respectively, into $2^{Q}$,
the power set of $Q$.
\end{definition}

Let $A=(\Sigma, Q, F, \delta_{0},\delta_{2})$ be an automaton and
let $(T,\sigma)$ be a $\Sigma$\dash tree with root $t$.
We let $r\colon V(T)\to 2^{Q}$ be the function recursively defined
as follows:
\begin{enumerate}[label=\textup{(\roman*)}]
\item if $v$ is a leaf of $T$, then $r(v)$ is $\delta_{0}(\sigma(v))$
if this is defined, and is otherwise the empty set.
\item if $v$ has left child $v_{L}$ and right child $v_{R}$, then
\[
r(v)=\bigcup_{(q_{L},q_{R})\in r(v_{L})\times r(v_{R})} \delta_{2}(\sigma(v),q_{L},q_{R}),
\]
as long as the images in this union are all defined: if they are
not then we set $r(v)$ to be empty.
\end{enumerate}
We say that $r$ is the \emph{run} of the automaton $A$ on $(T,\sigma)$.
Note that we define a union over an empty collection to be the empty set.
We say that $A$ \emph{accepts} $(T,\sigma)$ if $r(t)$ contains
an accepting state.

Let $\Sigma$ be a finite alphabet, and let
$(T,\sigma)$ be a $\Sigma$\dash tree.
Let $\varphi$ be a bijection from the finite set $E$ into $L(T)$,
and let $Y$ be a subset of $E$.
We construct a
$\Sigma \cup (\Sigma\times \{0,1\})$\dash tree which we denote $\enc(T,\sigma,\varphi,Y)$.
The characters applied to the leaves of this tree will encode the
subset $Y$.
If $v$ is a non-leaf vertex of $T$, then it receives the label
$\sigma(v)$ in $\enc(T,\sigma,\varphi,Y)$.
However, if $v$ is a leaf, then it receives a label
$(\sigma(v), s)$, where $s=1$ if $\varphi^{-1}(v)$ is in $Y$ and otherwise $s=0$.

\begin{definition}
\label{option-II}
Let $\Sigma$ be a finite set and let $A$ be a tree automaton
with $\Sigma\cup (\Sigma\times \{0,1\})$ as its alphabet.
Let $(T,\sigma)$ be a $\Sigma$\dash tree, and let
$\varphi$ be a bijection from the finite set $E$ into $L(T)$.
We define the set-system $M(A,T,\sigma,\varphi)$
as follows:
\[
M(A,T,\sigma,\varphi)
=
(E,\{Y\subseteq E\colon A\ \text{accepts}\
\enc(T,\sigma,\varphi,Y)\}).
\]
\end{definition}

So $M(A,T,\sigma,\varphi)$ is the set-system consisting of all
subsets of the leaves that are accepted by the automaton.

Now let $\Sigma$ be a finite set, and let
$A$ be a tree automaton
with alphabet $\Sigma\cup (\Sigma\times \{0,1\})$.
Let $M=(E,\mcal{I})$ be a set-system.
Assume there is a $\Sigma$\dash tree $(T_{M},\sigma_{M})$, and a bijection
$\varphi_{M}\colon E\to L(T_{M})$ having the property that
$M=M(A,T_{M},\sigma_{M},\varphi_{M})$.
In this case we say that $(T_{M},\sigma_{M},\varphi_{M})$ is a \emph{parse tree} for $M$ (relative to the automaton $A$).

Note that if $(T_{M},\sigma_{M},\varphi_{M})$ is a parse tree for $M$, then
we can simulate an independence oracle for $M$ by running $A$.
We simply label each leaf $v$ with $(\sigma_{M}(v),1)$ if $\varphi_{M}^{-1}(v)$ is in $Y$ and
with $(\sigma_{M}(v),0)$ otherwise.
By then running $A$ on the resulting tree, and testing to see if it
accepts, we can check whether or not $Y$ is in \mcal{I}.
This idea is central to the proofs of Hlin\v{e}n\'{y}'s Theorem and
of \Cref{lounge-II}.

\section{Pigeonhole classes}
\label{pigeonhole-II}

This section states our central definitions: decomposition-width,
pigeonhole classes, strongly pigeonhole classes, and
efficiently pigeonhole classes.

\begin{definition}
\label{fracas-II}
Let $(E,\mcal{I})$ be a set-system, and let $U$ be a subset of $E$.
Let $X$ and $X'$ be subsets of $U$.
We say $X$ and $X'$ are \emph{equivalent} (relative to $U$),
written $X\sim_{U} X'$, if for every subset $Z\subseteq E-U$,
the set $X\cup Z$ is in \mcal{I} if and only if $X'\cup Z$ is in \mcal{I}.
\end{definition}

Clearly $\sim_{U}$ is an equivalence relation on the subsets of $U$.
No member of \mcal{I} is equivalent to a subset not in \mcal{I}.
When \mcal{I} is the set of independent sets of a matroid
(more generally, when \mcal{I} is closed under subset containment),
all dependent subsets of $U$ are equivalent.

A \emph{subcubic tree} is one in which every vertex has degree three or one.
A degree-one vertex is a \emph{leaf}.
Let $M=(E,\mcal{I})$ be a set-system.
A \emph{decomposition} of $M$ is a pair $(T,\varphi)$, where $T$ is a
subcubic tree, and $\varphi$ is a bijection from $E$ into the set of leaves of $T$.
Let $e$ be an edge joining vertices $u$ and $v$ in $T$.
Then $e$ partitions $E$ into sets $(U_{e}, V_{e})$ in the following
way: an element $x\in E$ belongs to $U_{e}$ if and only if
the path in $T$ from $\varphi(x)$ to $u$ does not contain $v$.
We say that the partition $(U_{e},V_{e})$ and the sets $U_{e}$ and
$V_{e}$ are \emph{displayed} by the edge $e$.
Define $\dw(M;T,\varphi)$ to be the maximum
number of equivalence classes in $\sim_{U}$, where the maximum is taken
over all subsets, $U$, displayed by an edge in $T$.
Define $\dw(M)$ to be the minimum value of
$\dw(M;T,\varphi)$, where the minimum is taken over all
decompositions $(T,\varphi)$ of $M$.
This value is then said to be the \emph{decomposition-width} of $M$.
If $M$ is a matroid, then $\dw(M)$ is defined to be
$\dw(E(M),\mcal{I})$.
Kr\'{a}l \cite{Kra12} and Strozecki \cite{Str10,Str11} used an
equivalent notion of decomposition-width.

Let $M$ be a matroid.
If $(T,\varphi)$ is a decomposition of $M=(E(M),\mcal{I}(M))$, then
$\bw(M;T,\varphi)$ is the maximum value of
\[\lambda_{M}(U_{e})+1=r_{M}(U_{e})+r_{M}(V_{e})-r(M)+1,\]
where the maximum is taken over all partitions $(U_{e},V_{e})$
displayed by edges of $T$.
Now the \emph{branch-width} of $M$ (written $\bw(M)$) is the
minimum value of $\bw(M;T,\varphi)$,
where the minimum is taken over all decompositions of $M$.
A class of matroids with bounded rank also has bounded branch-width.
In \cite[Corollary 2.8]{FMN-I} we show that a class of matroids with
%whaler
bounded decomposition-width also has bounded branch-width.
The converse is not true (see \Cref{apogee}).
This motivates the following definition.

\begin{definition}
\label{juicer-II}
Let \mcal{M} be a class of matroids.
Then \mcal{M} is \emph{pigeonhole} if, for every positive
integer, $\lambda$, there is an integer $\rho(\lambda)$
such that $\bw(M)\leq \lambda$ implies
$\dw(M)\leq \rho(\lambda)$, for every $M\in\mcal{M}$.
\end{definition}

So a class of matroids is pigeonhole if every subclass with bounded
branch-width also has bounded decomposition-width.
The next result is \cite[Corollary 5.3]{FMN-I}.
%dynamo

\begin{proposition}
\label{dynamo-II}
Let \mcal{M} be a class of matroids.
Then \mcal{M} is pigeonhole if and only if
$\{M^{*}\colon M\in\mcal{M}\}$ is pigeonhole.
\end{proposition}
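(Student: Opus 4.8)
The plan is to show that a decomposition $(T,\varphi)$ is simultaneously a good decomposition for $M$ and for $M^{*}$, by relating the equivalence relation $\sim_{U}$ computed in $M$ to the one computed in $M^{*}$ across the same displayed partition. Fix a matroid $M$ with a decomposition $(T,\varphi)$, and let $e$ be an edge displaying the partition $(U,V)$ of $E(M)=E(M^{*})$. The key observation is a duality-symmetric statement: the number of $\sim_{U}$-classes in $M$ and the number of $\sim_{U}$-classes in $M^{*}$ are each bounded by a function of the other, with the bound depending only on $\lambda_{M}(U)=\lambda_{M^{*}}(U)$. Granting such a statement, if $M$ is pigeonhole we take a decomposition witnessing $\dw(M)\le\rho(\bw(M))$; since $\bw(M^{*})=\bw(M)$ and every displayed $U$ has $\lambda_{M^{*}}(U)=\lambda_{M}(U)\le\bw(M)-1$, the same decomposition bounds $\dw(M^{*})$ by a function of $\bw(M^{*})$, giving that $\{M^{*}:M\in\mathcal{M}\}$ is pigeonhole. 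The converse is identical since $M\mapsto M^{*}$ is an involution.

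First I would make precise how $\sim_{U}$ in $M^{*}$ relates to data in $M$. Independence in $M^{*}$ of a set $X\cup Z$ (with $X\subseteq U$, $Z\subseteq V$) is governed by the rank function $r_{M^{*}}$, which expands via $r_{M^{*}}(S)=|S|+r_{M}(E\setminus S)-r(M)$. So the $M^{*}$-independence of $X\cup Z$ is determined by $r_{M}$ evaluated on complements, i.e.\ on sets of the form $(U\setminus X)\cup(V\setminus Z)$. The point is that the behaviour of $r_{M}$ on $U\setminus X$, as seen through arbitrary completions inside $V$, is exactly the kind of local data that is coarsened by an equivalence relation of the same flavour as $\sim_{U}$; and the number of distinct such local behaviours is controlled once $\lambda_{M}(U)$ is bounded. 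Concretely, I would use that across a partition of connectivity $\lambda$ the restriction $M|U$, together with the ``trace'' of how $U$ interacts with $V$, is captured by a bounded amount of information — this is precisely the content behind the bound relating $\dw$ and $\bw$ from \cite[Corollary 2.8]{FMN-I} and its proof, and I expect to reuse that machinery rather than re-derive it. From such a bounded description of the $U$-side of $M$ one reads off both the $\sim_{U}$-classes of $M$ and, via the rank formula above, the $\sim_{U}$-classes of $M^{*}$.

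The main obstacle is the middle step: establishing cleanly that passing to the dual does not blow up the number of equivalence classes beyond a function of $\lambda$, ideally with an explicit and symmetric bound. The subtlety is that $\sim_{U}$ for $M^{*}$ tests completions $Z\subseteq V$ against $M^{*}$-independence, which through the rank formula becomes a statement about $r_{M}$ on $V\setminus Z$ ranging over \emph{all} subsets of $V$ — so one must check that the complementation does not secretly depend on global information about $M$. The right way to handle this is to work with a canonical bounded encoding of the partition (a ``boundary'' description of size depending only on $\lambda$) in which duality acts transparently; once $M|U$ is replaced by such an encoding, $\sim_{U}$-classes on either side of the duality are a function of the encoding alone, and one gets a bound $\dw(M^{*};T,\varphi)\le g(\bw(M;T,\varphi))$ for an explicit $g$. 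I would then also remark that $\bw(M^{*})=\bw(M)$, which is standard since $\lambda_{M^{*}}(U)=\lambda_{M}(U)$ for every $U\subseteq E$, so the branch-width hypotheses transfer verbatim and the argument closes symmetrically.
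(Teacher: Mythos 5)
The paper does not prove this proposition: it is quoted directly from \cite[Corollary 5.3]{FMN-I}. Judging your sketch on its own merits: the overall shape — transport a single decomposition $(T,\varphi)$ between $M$ and $M^{*}$, use $\bw(M)=\bw(M^{*})$, and compare the number of $\sim_{U}$-classes across the duality — is the right strategy, but as written there is a genuine gap and a small error.

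The error: you claim that for the decomposition witnessing $\dw(M)\le\rho(\bw(M))$, every displayed $U$ has $\lambda_{M}(U)\le\bw(M)-1$. That bound is guaranteed only for a decomposition witnessing $\bw(M)$, which need not be the one witnessing $\dw(M)$. (One can salvage a bound $\lambda_{M}(U)\le(\text{number of }\sim_{U}\text{-classes})-1$ via the argument behind \cite[Corollary 2.8]{FMN-I}, but then the dependence is on $\rho(\lambda)$, not $\lambda$.) The gap is more serious: the entire proof hinges on your ``key observation'' that the number of $\sim_{U}$-classes in $M$ and $M^{*}$ are bounded by functions of each other, and you explicitly defer this as ``the main obstacle,'' offering only a heuristic about a ``canonical bounded encoding.'' Without a proof of this lemma the argument does not close. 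In fact the correct statement is cleaner than you anticipate and needs no $\lambda$ at all: writing $\approx_{U}$ for the relation ``$Y\approx_{U}Y'$ iff for every $W\subseteq V$, $Y\cup W$ spans $M$ exactly when $Y'\cup W$ does,'' complementation $X\mapsto U\setminus X$ gives a bijection between $\sim_{U}$-classes of $M^{*}$ and $\approx_{U}$-classes of $M$. For an independent $X\subseteq U$ the $\sim_{U}$-class is determined by the minor $M/X|V$, while the $\approx_{U}$-class of any $Y$ is determined either by ``$Y\cup V$ is nonspanning'' or by $M/Y|V$ (with $Y$ replaced by a maximal independent subset, which does not change $M/Y|V$). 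Comparing the two descriptions gives that the numbers of classes under $\sim_{U}$ and $\approx_{U}$ differ by at most one, hence the numbers of $\sim_{U}$-classes in $M$ and $M^{*}$ differ by at most one, and the proposition follows directly by taking $\rho'=\rho+1$. This is the missing lemma your proposal needs; your $\lambda$-dependent detour is unnecessary once it is in place.
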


We often find that natural classes of matroids with the pigeonhole
property also possess a stronger property.

\begin{definition}
\label{fizzer-II}
Let \mcal{M} be a class of matroids.
Assume that for every positive integer $\lambda$, there is
a positive integer $\pi(\lambda)$, such that whenever $M\in\mcal{M}$
and $U\subseteq E(M)$ satisfies $\lambda_{M}(U)\leq \lambda$,
there are at most $\pi(\lambda)$ equivalence classes under $\sim_{U}$.
In this case we say that \mcal{M} is \emph{strongly pigeonhole}.
\end{definition}

In \cite[Proposition 2.11]{FMN-I}, we give the easy proof that any class with the
%fedora
strong pigeonhole property also has the pigeonhole property.
On the other hand, the class of rank\dash $2$ matroids is pigeonhole
without being strongly pigeonhole (see \cite[Remark 2.12]{FMN-I}.)
% Explanation

\begin{proposition}
\label{tabard}
The class of uniform matroids is strongly pigeonhole.
\end{proposition}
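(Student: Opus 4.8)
The plan is to show directly that for a uniform matroid $M = U_{r,n}$ and any subset $U \subseteq E(M)$, the number of equivalence classes under $\sim_{U}$ is bounded by a function of $\lambda_{M}(U)$ alone. The key observation is that in a uniform matroid, whether $X \cup Z$ is independent depends only on the pair $(|X|, |Z|)$ and the rank $r$: the set $X \cup Z$ is independent precisely when $|X| + |Z| \leq r$ (assuming $X$ and $Z$ are disjoint, which holds since $X \subseteq U$ and $Z \subseteq E - U$). So the behaviour of $X$ under $\sim_{U}$ is completely determined by $|X|$, together with the relevant cut-off value $r - |X|$ compared against the available sizes of $Z \subseteq E - U$.

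First I would compute $\lambda_{M}(U)$ for $M = U_{r,n}$ explicitly. Writing $u = |U|$ and $v = |E - U| = n - u$, we have $r_{M}(U) = \min(u, r)$ and $r_{M}(E - U) = \min(v, r)$, so $\lambda_{M}(U) = \min(u,r) + \min(v,r) - r$. The case analysis on whether $u \geq r$ and whether $v \geq r$ shows that $\lambda_{M}(U)$ is small exactly when $U$ or its complement is ``small'' relative to $r$, or when $r$ is large relative to $n$; in all cases one extracts that $\min(u, r, n - r)$ and $\min(v, r, n - r)$ are both at most $\lambda_{M}(U)$, or something of this flavour.

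Next, for the counting: a dependent subset of $U$ (i.e.\ one of size exceeding $r$, which can only occur if $u > r$) is equivalent to every other dependent subset of $U$, contributing at most one class. For an independent $X \subseteq U$, I claim $X \sim_{U} X'$ whenever $|X| = |X'|$, and moreover whenever $|X|$ and $|X'|$ are both large enough that no $Z \subseteq E - U$ can ``reach'' rank $r$ — precisely, whenever $|X|, |X'| > r - \min(v+1, \text{something})$. Counting the genuinely distinct values of $|X|$ that can produce distinct classes then shows the number of classes is at most roughly $\lambda_{M}(U) + 2$ (one for the dependent class, and the independent sizes that matter range over an interval of length controlled by $\min(u, v, r, n-r)$, hence by $\lambda_{M}(U)$). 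Setting $\pi(\lambda) = \lambda + 2$, or whatever clean bound the case analysis yields, then witnesses the strong pigeonhole property.

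The main obstacle is the bookkeeping in the case analysis: one must carefully handle the degenerate regimes (when $U$ or $E - U$ is smaller than $r$, when $r = 0$ or $r = n$, when $U = E$) and verify in each that the number of independent-set sizes yielding distinct $\sim_{U}$-classes is genuinely bounded by a function of $\lambda_{M}(U)$ and not secretly of $r$ or $n$. The conceptual content is entirely in the remark already made in the excerpt — that all dependent subsets of $U$ are equivalent, and for uniform matroids independence is governed by cardinality — so beyond the case check there should be no real difficulty.
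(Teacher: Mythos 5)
Your approach is essentially the paper's: in a uniform matroid, whether $X\cup Z$ is independent depends only on $|X|+|Z|$ versus $r$, so the $\sim_U$\dash classes are indexed by $|X|$, with the dependent sizes collapsing to one class, the very small sizes collapsing to another, and roughly $\lambda_M(U)$ intermediate sizes each forming its own class.

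The one substantive slip is that your collapsing condition is stated backwards. You say the independent sets collapse "whenever $|X|, |X'|$ are both \emph{large} enough that no $Z$ can reach rank $r$" and write the inequality as $|X|, |X'| > r - \min(v+1, \dots)$. In fact it is the \emph{small} sets that cannot be separated: no $Z\subseteq E(M)-U$ can make $X\cup Z$ dependent precisely when $|X| + |E(M)-U| \le r$, i.e.\ $|X|\le r-|E(M)-U|$, which the paper's proof records as the threshold $|X| \le r_M(U)-\lambda$. (Also, your intermediate claim that $\min(u,r,n-r)\le\lambda_M(U)$ is false — take $u=100$, $v=1$, $r=2$ — though the identity $\min(u,v,r,n-r)=\lambda_M(U)$, which you also gesture at, does hold.) The paper sidesteps the case bookkeeping you anticipate by simply \emph{defining} the refinement with classes (i) both sizes $>r_M(U)$, (ii) a single size in $(r_M(U)-\lambda,\,r_M(U)]$, (iii) both sizes $\le r_M(U)-\lambda$, and verifying refinement of $\sim_U$ with one short contradiction: if $X\cup Z$ is independent and $X'\cup Z$ dependent with $|X'|\le r_M(U)-\lambda$, then $|Z|\le r_M(E(M)-U)$ and $|X'|+|Z|>r$ force $\lambda_M(U)>\lambda$. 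Once you flip the direction of your threshold, your argument lands in the same place.
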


\begin{proof}
Let $M$ be a rank\dash $r$ uniform matroid, and
let $U$ be a subset of $E(M)$ such that $\lambda_{M}(U)\leq \lambda$,
for some positive integer $\lambda$.
Declare subsets $X,X'\subseteq U$ to be equivalent if:
\begin{enumerate}[label=\textup{(\roman*)}]
\item $|X|, |X'| > r_{M}(U)$,
\item $r_{M}(U)-\lambda< |X|=|X'|\leq r_{M}(U)$, or
\item $|X|,|X'| \leq r_{M}(U)-\lambda$.
\end{enumerate}
Thus there are at most $\lambda+2$ equivalence classes, and
we will be done if we can show that this equivalence
relation refines $\sim_{U}$.
If $|X|,|X'|> r_{M}(U)$ then both $X$ and $X'$ are 
dependent, and hence they are equivalent under $\sim_{U}$.
Since $M$ is uniform, any subsets of $U$ with the same
cardinality will be equivalent under $\sim_{U}$.
Therefore we now need only consider the case that
$|X|,|X'|\leq r_{M}(U)-\lambda$.
Assume that $Z\subseteq E(M)-U$, and
$X\cup Z$ is independent while $X'\cup Z$ is
dependent.
Since $X'\cup Z$ is dependent, it follows that
$|X'\cup Z|>r(M)$.
As $X\cup Z$ is independent, we see that
$|Z|\leq r_{M}(E(M)-U)$.
Therefore
\[r(M)<|X'\cup Z|=|X'|+|Z|\leq r_{M}(U)-\lambda+r_{M}(E(M)-U).\]
Hence $r_{M}(U)+r_{M}(E(M)-U)-r(M)>\lambda$,
and we have a contradiction to $\lambda_{M}(U)\leq \lambda$.
\end{proof}

\Cref{lounge-II} is concerned with matroid algorithms.
For the purposes of measuring the efficiency of these algorithms,
we restrict our attention to matroid classes where there is a
succinct representation, such as graphic matroids or
finite-field-representable matroids.

\begin{definition}
\label{umpire-II}
Let \mcal{M} be a class of matroids.
A \emph{succinct representation} of \mcal{M} is a
relation, $\Delta$, from \mcal{M} into the set of finite binary strings, along
with a polynomial-time Turing Machine.
We write $\Delta(M)$ to indicate any string in the image of $M\in \mcal{M}$.
The Turing machine must, when given any input $(\Delta(M),X)$ where
$M\in\mcal{M}$ and $X$ is a subset of $E(M)$, correctly answer the question
``Is $X$ independent in $M$?".
\end{definition}

If the Turing Machine operates in time bounded by at most $p(n)$ on any input
of length $n$, where $p$ is some polynomial, then it follows that
the length of the string $\Delta(M)$ is no more than $p(|E(M)|)$.
A graph provides a succinct representation of a graphic matroid, and
a matrix provides a succinct representation of a finite-field-representable matroid.

\begin{proposition}
\label{nature}
Let $\Delta$ be a succinct representation of \mcal{M}, a class of matroids.
For each positive integer $\lambda$ let $\mcal{M}_{\lambda}$ be
$\{M\in\mcal{M}\colon \bw(M)\leq \lambda\}$.
Let $f$ be a function from positive integers to positive integers.
Assume there is an algorithm which when given a positive integer $\lambda$
will produce a tree automaton $A_{\lambda}$ in time bounded by $f(\lambda)$.
Assume also there is an algorithm which when given $(\Delta(M),\lambda)$
for $M\in\mcal{M}_{\lambda}$  will produce a parse tree for 
$M$ relative to $A_{\lambda}$ in time bounded by $f(\lambda)|\Delta(M)|^{c}$,
where $c$ is a constant.
Let $\psi$ be a sentence in \cmso.
We can test whether matroids in \mcal{M} satisfy $\psi$ using an algorithm that is fixed-parameter tractable with respect to branch-width.
Furthermore, \mcal{M} is pigeonhole.
\end{proposition}

\begin{proof}
Assume we are given the input $(\Delta(M),\lambda)$, where $M$ is in
$\mcal{M}_{\lambda}$.
We first construct $A_{\lambda}$ in time bounded by $f(\lambda)$.
Assume that $(T_{M},\sigma_{M},\varphi_{M})$ is the parse tree that we construct for
$M\in\mcal{M}_{\lambda}$ relative to $A_{\lambda}$.
This takes time bounded by $f(\lambda)|\Delta(M)|^{c}$.
Now \cite[Lemma 4.7]{FMN-I} tells us that there is a tree automaton
$A_{\lambda,\psi}$ such that $A_{\lambda,\psi}$ accepts $(T_{M},\sigma_{M},\varphi_{M})$
if and only $M$ satisfies $\psi$.
Moreover, the proof of this lemma is constructive, and it tells us how
to build $A_{\lambda,\psi}$, given $A_{\lambda}$.
The time taken to build $A_{\lambda,\psi}$ depends only $\lambda$ and $\psi$
(which we regard as fixed).
Let this time be equal to $g_{\psi}(\lambda)$.

Applying $A_{\lambda,\psi}$ to $(T_{M},\sigma_{M},\varphi_{M})$ takes time bounded
by $h_{\psi}(\lambda)|\Delta(M)|$, for some value $h_{\psi}(\lambda)$ that
depends only on $\psi$ and $\lambda$.
Now the total process of testing $M$ to see if it satisfies $\psi$ takes time
bounded by
\[f(\lambda)(|\Delta(M)|^{c}+1)+g_{\psi}(\lambda)+h_{\psi}(\lambda)|\Delta(M)|.\]
This establishes the existence of the fixed-parameter tractable algorithm.

The existence of $A_{\lambda}$ and $(T_{M},\sigma_{M},\varphi_{M})$ for any $M\in\mcal{M}_{\lambda}$
means that $\mcal{M}_{\lambda}$ is \emph{automatic} (using the language of \cite{FMN-I}).
Because it is automatic, it has bounded decomposition-width
(\cite[Lemma 5.1]{FMN-I}).
So for any positive integer $\lambda$, the class $\mcal{M}_{\lambda}$ has bounded decomposition-width.
This is exactly what it means for \mcal{M} to be pigeonhole.
\end{proof}

\begin{definition}
\label{toffee-II}
Let $\Delta$ be a succinct representation of \mcal{M}, a
class of matroids.
We say that $\Delta$ is \emph{minor-compatible}
if there is a polynomial-time algorithm which will accept
any tuple $(\Delta(M),X,Y)$ when $M\in\mcal{M}$ and $X$ and
$Y$ are disjoint subsets of $E(M)$, and return
a string of the form $\Delta(M/X\ba Y)$.
\end{definition}

In order to construct parse trees for automata to process,
we need to be able to efficiently compute the equivalence classes of $\sim_{U}$.
In fact, we are happy to compute an equivalence relation that refines $\sim_{U}$,
as long as it does not have too many classes.

\begin{definition}
\label{yakuza-II}
Let \mcal{M} be a class of matroids with a succinct representation $\Delta$.
Assume there is a Turing Machine, a constant $c$, and a function $\pi$ from positive integers to positive integers such that the machine takes as input any tuple $(\Delta(M),U,X,X',\lambda)$, where $M$ is in \mcal{M}, $U\subseteq E(M)$
satisfies $\lambda_{M}(U)\leq \lambda$, and $X$ and $X'$ are subsets of $U$.
Assume also that in time bounded by $O(\pi(\lambda)|E(M)|^{c})$, the machine computes an
equivalence relation, $\approx_{U}$, on the subsets of $U$, so that it
accepts $(\Delta(M),U,X,X',\lambda)$ if and only if $X\approx_{U} X'$.
Furthermore we assume that
\begin{enumerate}[label=\textup{(\roman*)}]
\item $X\approx_{U} X'$ implies $X\sim_{U} X'$, and
\item the number of equivalence classes under $\approx_{U}$ is at most $\pi(\lambda)$.
\end{enumerate}
Under these circumstances, we say that \mcal{M} is
\emph{efficiently pigeonhole} (relative to $\Delta$).
\end{definition}

\begin{remark}
The proof of \Cref{lounge-II} essentially follows from \Cref{nature} and the
observation that given $\Delta(M)$ we can construct a branch-decomposition
for $M$ in polynomial time (\cite[Proposition 6.3]{FMN-I}).
We can then convert this decomposition tree into a parse tree.
The automaton which processes this tree can also be constructed in
time bounded by $O(\lambda^{c})$.
\Cref{lounge-II} follows\footnote{Note that the proof of \Cref{lounge-II} requires
a definition of efficiently pigeonhole classes that is very slightly stronger than the one
presented in \cite{FMN-I}.}.
\end{remark}

Clearly an efficiently pigeonhole class of matroids is
also strongly pigeonhole.
In \Cref{yuppie}, we will prove that the class of fundamental transversal matroids
is efficiently pigeonhole.
Statement (i) of \Cref{hammer} will then immediately follow, by an application of
\Cref{lounge-II}.

\section{Non-pigeonhole classes}
\label{labour}

Next we develop some tools for proving negative results.
We want to certify that certain classes are not pigeonhole.
Recall that a matroid with rank $r$ is \emph{sparse paving} if
every circuit has cardinality either $r$ or $r+1$ and when
$C$ and $C'$ are distinct circuits of size $r$ then $|C\cap C'|<r-1$.
Let $G$ be a simple graph with edge set $\{e_{1},\ldots, e_{m}\}$
and vertex set $\{v_{1},\ldots, v_{n}\}$, where $n\geq 3$.
We define $m(G)$ to be the rank\dash $3$ sparse
paving matroid with ground set
$\{v_{1},\ldots, v_{n}\}\cup\{e_{1},\ldots, e_{m}\}$.
The only non-spanning circuits of $m(G)$ are the
sets $\{v_{i},e_{k},v_{j}\}$, where $e_{k}$ is an
edge of $G$ joining the vertices $v_{i}$ and $v_{j}$.

\begin{lemma}
\label{apogee}
Let \mcal{M} be a class of matroids.
Assume there are arbitrarily large integers, $N$, such that \mcal{M}
contains a matroid isomorphic to $m(K_{N})$.
Then \mcal{M} contains rank\dash $3$ matroids with arbitrarily high
decomposition-width.
Hence \mcal{M} is not pigeonhole.
\end{lemma}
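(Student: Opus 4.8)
The plan is to reduce everything to a lower bound of the form $\dw(m(K_N))\ge cN$ for some fixed $c>0$. Indeed, $m(K_N)$ has rank $3$, and any rank\dash $r$ matroid $M$ satisfies $\bw(M)\le r+1$, since every displayed set $U$ has $\lambda_{M}(U)=r_{M}(U)+r_{M}(E(M)-U)-r(M)\le r_{M}(U)\le r$. So such a bound exhibits rank\dash $3$ members of $\mcal{M}$ (recall $\bw$ and $\dw$ are isomorphism invariants, so it is harmless that $\mcal{M}$ contains only matroids \emph{isomorphic} to $m(K_N)$) with branch\dash width at most $4$ and unbounded decomposition\dash width, contradicting the pigeonhole condition of \Cref{juicer-II} taken with $\lambda=4$. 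To prepare for the lower bound, I would record the dependent sets of $m(K_N)$: every subset of size at least $4$ is dependent, the only dependent $3$\dash subsets are the triangles $\{v_i,e_{ij},v_j\}$, and every subset of size at most $2$ is independent.

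Now fix an arbitrary decomposition $(T,\varphi)$ of $m(K_N)$; I want an edge of $T$ displaying a set $U$ with many $\sim_{U}$\dash classes. First I would weight the leaf $\varphi(v_i)$ by $1$ and each leaf $\varphi(e_{ij})$ by $0$, so the total weight is $N$ and (as $N$ is large) no leaf carries more than half of it. A balanced\dash separator argument for subcubic trees then yields an edge $e$ of $T$ such that each side of the displayed partition $(U_e,V_e)$ contains at least $N/3$ of the vertex elements: if no such edge existed, orienting every edge toward its side of weight $>N/2$ would give every internal vertex an out\dash edge, and following out\dash edges would reach a leaf of weight exceeding $2N/3$, which is impossible. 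Write $V,V'$ for the sets of vertex elements on the two sides, so $N/3\le |V|,|V'|\le 2N/3$.

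Next I would count crossing edges. There are $|V|\,|V'|\ge N^{2}/9$ edges of $K_N$ with one end in $V$ and the other in $V'$, and each such edge\dash element lies in $U_e$ or in $V_e$; after possibly swapping the two sides of $e$, assume at least $N^{2}/18$ of them lie on the side not containing $V$ --- say $V\subseteq U_e$ and at least $N^{2}/18$ crossing edge\dash elements lie in $V_e$. Since each $v_i\in V$ is the $V$\dash end of at most $|V'|\le 2N/3$ crossing edges, at least $(N^{2}/18)/(2N/3)=N/12$ vertices $v_i\in V$ admit a crossing neighbour $v_{j(i)}\in V'$ with $e_{i,j(i)}\in V_e$. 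For each such $i$ put $Z_i=\{e_{i,j(i)},v_{j(i)}\}\subseteq V_e=E(m(K_N))-U_e$. Then $\{v_i\}\cup Z_i=\{v_i,e_{i,j(i)},v_{j(i)}\}$ is a triangle, hence dependent, while for $k\ne i$ with $v_k\in V$ the set $\{v_k\}\cup Z_i=\{v_k,e_{i,j(i)},v_{j(i)}\}$ has three distinct elements and is not a triangle (the only triangle containing $e_{i,j(i)}$ is $\{v_i,e_{i,j(i)},v_{j(i)}\}$), hence independent. Thus $Z_i$ witnesses $\{v_i\}\not\sim_{U_e}\{v_k\}$, so the singletons $\{v_i\}$ over these at least $N/12$ indices lie in pairwise distinct classes of $\sim_{U_e}$. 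Hence $\dw(m(K_N);T,\varphi)\ge N/12$, and minimising over decompositions gives $\dw(m(K_N))\ge N/12$.

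The one genuinely delicate step is the balanced\dash separator step: we must split the \emph{vertex} leaves roughly evenly (balancing all leaves at once would be useless), so we really need to know that the vertex leaves cannot all accumulate on one side of every edge, which is exactly where the hypothesis that $N$ is large enters. Everything else is a routine count together with a direct inspection of which triangles of $m(K_N)$ survive in a given separation. (One might hope to shortcut via the observation that $m(K_{N'})$ is a restriction of $m(K_N)$ when $N'\le N$, but this still leaves the same statement $\dw(m(K_N))\to\infty$ to be proved.)
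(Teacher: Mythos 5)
Your proof is correct and takes a genuinely different, and in my view cleaner, route than the paper. The paper invokes [Oxl11, Lemma~14.2.2] to split \emph{all} $N+\binom{N}{2}$ ground-set elements into a $1/3$--$2/3$ displayed partition, colours elements red or blue accordingly, and then uses the Erd\H{o}s--Rado and Abbott--Hanson--Sauer intersection theorems to extract, through a case analysis on how many red vertices there are, either a large matching or a high-degree vertex of the right colour, yielding many inequivalent pairs. You instead weight only the $N$ vertex elements and find a displayed partition splitting \emph{that} weight $1/3$--$2/3$; this produces $\Theta(N^{2})$ crossing edges at once, and a single pigeonhole count hands you linearly many $v_{i}$ on one side, each paired with a witness $Z_{i}=\{e_{i,j(i)},v_{j(i)}\}$ on the other that separates $\{v_{i}\}$ from every other such singleton under $\sim_{U_{e}}$. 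This buys a direct quantitative bound $\dw(m(K_{N}))\geq N/12$ with no sunflower machinery and no case analysis; the price is that you need a weighted form of the subcubic-tree separator lemma rather than the unweighted count of [Oxl11, Lemma~14.2.2], but that is standard and your orientation sketch delivers it (one small inaccuracy there: for $N\geq 2$ no edge of $T$ is oriented \emph{toward} a leaf, since the leaf side carries weight at most $1$ and so is never the heavy side, so the contradiction is that the out-edge walk can never terminate in a finite acyclic tree, not that it reaches a heavy leaf). Both proofs rest on the same two observations --- rank-$3$ matroids have branch-width at most $4$, and the triangles $\{v_{i},e_{ij},v_{j}\}$ are the only non-spanning circuits of $m(K_{N})$ --- and both are valid.
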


\begin{proof}
Observe that rank\dash $3$ matroids have branch-width at most four,
so if $\{M\in\mcal{M}\colon r(M)= 3\}$ has unbounded decomposition-width,
then \mcal{M} is certainly not pigeonhole.
Assume for a contradiction that every rank\dash $3$ matroid in \mcal{M}
has decomposition-width at most $K$.

Let $k$ be an arbitrary integer greater than $K$, and 
let $N$ be an integer such that
\[
\frac{1}{3}\left(N+\binom{N}{2}\right) \geq 7k^{2}+2k.
\]
and \mcal{M} contains a matroid, $M$, isomorphic to $m(K_{N})$.
By relabelling, we assume that
the ground set of $M$ is
$
\{v_{1},\ldots, v_{N}\}\cup\{e_{ij}\colon 1\leq i<j\leq N\}
$
and the only non-spanning circuits are
of the form $\{v_{i},e_{ij},v_{j}\}$.
Let $(T,\varphi)$ be a decomposition of $M$ with the
property that if $U$ is any displayed set, then
$\sim_{U}$ has at most $K$ classes.
Using \cite[Lemma 14.2.2]{Oxl11}, we choose
an edge $e$ in $T$ such that each of the
displayed sets, $U_{e}$ and $V_{e}$, contains at least
\[
\frac{1}{3}|E(M)|=\frac{1}{3}\left(N+\binom{N}{2}\right)\geq 7k^{2}+2k
\]
elements.
Let $G$ be a complete graph with vertex set
$\{v_{1},\ldots, v_{N}\}$ and edge set
$\{e_{ij}\colon 1\leq i<j\leq N\}$, where $e_{ij}$
joins $v_{i}$ to $v_{j}$.
We colour a vertex or edge red if it belongs to
$U_{e}$, and blue if it belongs to $V_{e}$.

Assume that there are at least $2k$ red vertices
and at least $2k$ blue vertices.
Then we can assume without loss of generality that there
is a matching in $G$ consisting of $k$ red
edges, each of which joins a red vertex to a blue vertex.
Thus we can find elements
$v_{i_{1}},\ldots, v_{i_{k}}$ in $U_{e}$ and elements
$v_{j_{1}},\ldots, v_{j_{k}}$ in $V_{e}$ such that
$e_{i_{p}j_{p}}$ is in $U_{e}$ for each $p$.
If $p$ and $q$ are distinct, then
$\{v_{i_{p}},e_{i_{p}j_{p}},v_{j_{p}}\}$ is a circuit of $M$ while
$\{v_{i_{q}},e_{i_{q}j_{q}},v_{j_{p}}\}$ is a basis.
Hence $\{v_{i_{p}},e_{i_{p}j_{p}}\}$ and
$\{v_{i_{q}},e_{i_{q}j_{q}}\}$ are inequivalent under $\sim_{U_{e}}$.
This means that $\sim_{U_{e}}$ has at least
$k$ equivalence classes.
As $k>K$, this is a contradiction, so
we assume without loss of generality that
there are fewer than $2k$ red vertices.

Assume some red vertex is joined to at least $k$ blue vertices
by red edges.
Then there is an element $v_{i}\in U_{e}$ and
elements $v_{j_{1}},\ldots, v_{j_{k}}\in V_{e}$ such that
$e_{ij_{p}}$ is in $U_{e}$ for each $p$.
For distinct $p$ and $q$, we see that
$\{v_{i},e_{ij_{p}},v_{j_{p}}\}$ is a circuit while
$\{v_{i},e_{ij_{q}},v_{j_{p}}\}$ is a basis.
Therefore $\{v_{i},e_{ij_{p}}\}$ and
$\{v_{i},e_{ij_{q}}\}$ are inequivalent under $\sim_{U_{e}}$.
We again reach the contradiction that there are at least $k$
equivalence classes under $\sim_{U_{e}}$.
Now we can deduce that there are fewer than
$2k^{2}$ red edges that join a red vertex to a blue vertex.

There are fewer than $2k$ red vertices and fewer than
\[
\binom{2k}{2}<4k^{2}
\]
red edges that join two red vertices.
Since the number of red edges and vertices is at least
one third of $N+\binom{N}{2}$, we see that the number of
red edges joining two blue vertices is at least
\[
\frac{1}{3}\left(N+\binom{N}{2}\right)-
(2k+2k^{2}+4k^{2})\geq k^{2}.
\]
A result of Abbott, Hanson, and Sauer \cite{AHS72}
implies that the subgraph induced by red edges that join two blue
vertices contains either a vertex of degree at least $k$,
or a matching containing at least $k$ edges.
In the former case, there are elements
$v_{i},v_{j_{1}},\ldots, v_{j_{k}}\in V_{e}$
such that $e_{ij_{p}}$ is in $U_{e}$ for each $p$.
Then $\{v_{i},e_{ij_{p}},v_{j_{p}}\}$ is a circuit,
and $\{v_{i},e_{ij_{p}},v_{j_{q}}\}$ is a basis
for distinct $p$ and $q$, so
$\{v_{i},v_{j_{p}}\}$ and
$\{v_{i},v_{j_{q}}\}$ are inequivalent under $\sim_{V_{e}}$.
This leads to a contradiction, so there is a matching
of at least $k$ edges.
Therefore we can find elements
$v_{i_{1}},\ldots, v_{i_{k}},v_{j_{1}},\ldots, v_{j_{k}}$ in
$V_{e}$ such that each $e_{i_{p}j_{p}}$ is in $U_{e}$.
For distinct $p$ and $q$, we see that
$\{v_{i_{p}},e_{i_{p}j_{p}},v_{j_{p}}\}$ is a circuit and
$\{v_{i_{q}},e_{i_{p}j_{p}},v_{j_{q}}\}$ is a basis.
Therefore $\{v_{i_{p}},v_{j_{p}}\}$ and
$\{v_{i_{q}},v_{j_{q}}\}$ are inequivalent under
$\sim_{V_{e}}$, so we reach a final contradiction that
completes the proof.
\end{proof}

Let $F$ be a flat of the matroid $M$.
Let $M'$ be a single-element extension of $M$,
and let $e$ be the element in $E(M')-E(M)$.
We say that $M'$ is a \emph{principal extension} of $M$
(relative to $F$) if $F\cup e$ is a flat of $M'$ and whenever
$X\subseteq E(M)$ spans $e$ in $M'$, it spans $F\cup e$.

\begin{corollary}
\label{nubbin}
Let \mcal{M} be a class of matroids.
If \mcal{M} contains all rank\dash $3$ uniform matroids,
and is closed under principal extensions, then it is
not pigeonhole.
\end{corollary}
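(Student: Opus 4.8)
The plan is to reduce Corollary~\ref{nubbin} to Lemma~\ref{apogee} by showing that a class satisfying the hypotheses must contain a matroid isomorphic to $m(K_{N})$ for every $N$. Since $m(K_{N})$ is a rank-$3$ sparse paving matroid, the natural strategy is to build it inside $\mcal{M}$ by starting from a rank-$3$ uniform matroid and performing a controlled sequence of principal extensions, each of which introduces one new "edge element" lying on a line through two existing "vertex elements."

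First I would fix $N$ and take the rank-$3$ uniform matroid $M_{0}$ on the ground set $\{v_{1},\dots,v_{N}\}$; this is in $\mcal{M}$ by hypothesis, and in $M_{0}$ every pair $\{v_{i},v_{j}\}$ is a rank-$2$ flat (a line) containing no other elements. Now process the pairs $\{i,j\}$ with $1\le i<j\le N$ one at a time. At the stage where we treat $\{i,j\}$, let $M$ be the matroid built so far, and let $F$ be the rank-$2$ flat of $M$ spanned by $\{v_{i},v_{j}\}$. The key point to verify is that $F=\{v_{i},v_{j}\}$ still, i.e. that no previously added edge element $e_{k\ell}$ lies on this line: this holds because $e_{k\ell}$ was added on the flat through $v_{k},v_{\ell}$, which is a distinct line (two distinct lines in a rank-$3$ matroid meet in at most a rank-$1$ flat, and all the $v$'s are in general position), so $e_{k\ell}\notin \cl(\{v_{i},v_{j}\})$ unless $\{k,\ell\}=\{i,j\}$. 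Then let $M'$ be the principal extension of $M$ by $F$, adding a new element $e_{ij}$; by definition $F\cup e_{ij}=\{v_{i},v_{j},e_{ij}\}$ is a rank-$2$ flat of $M'$, so $\{v_{i},v_{j},e_{ij}\}$ is a circuit, and the principality condition guarantees that $e_{ij}$ is not in the closure of any set not already spanning $\{v_{i},v_{j}\}$, which keeps the rest of the matroid in "general position." Since $\mcal{M}$ is closed under principal extensions, $M'\in\mcal{M}$.

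After all $\binom{N}{2}$ steps we obtain a rank-$3$ matroid $M_{N}\in\mcal{M}$ on $\{v_{1},\dots,v_{N}\}\cup\{e_{ij}\colon i<j\}$ whose only non-spanning circuits are the triples $\{v_{i},e_{ij},v_{j}\}$. I would check this circuit claim directly: each $\{v_{i},e_{ij},v_{j}\}$ is a circuit by construction; any other rank-$2$ set of size $3$ would have to consist of three collinear elements, but the lines of $M_{N}$ are exactly the $\{v_{i},v_{j},e_{ij}\}$ together with the various two-point lines, so no other triple is dependent; and no pair or singleton is dependent since all elements are rank-$1$ and distinct (the principality condition prevents any two added elements from coinciding or from landing on a two-point $v$-line). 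Hence $M_{N}\cong m(K_{N})$. Since $N$ was arbitrary, $\mcal{M}$ contains matroids isomorphic to $m(K_{N})$ for arbitrarily large $N$, and Lemma~\ref{apogee} gives that $\mcal{M}$ is not pigeonhole.

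The main obstacle I anticipate is the bookkeeping in the induction: one must be careful that after adding several edge elements, the flat through $\{v_{i},v_{j}\}$ has not accidentally acquired extra points and that no pair of edge elements $e_{ij}, e_{k\ell}$ has become parallel or collinear in a way that creates a non-spanning circuit other than the intended triples. This is exactly what the principality hypothesis is designed to control—principal extension by $F$ adds the new element "as generically as possible subject to being on $F$"—so the work is to phrase this genericity precisely enough to push the induction through. I expect this to be a short, essentially routine argument once the invariant ("the restriction to the $v$'s is uniform of rank $3$, and the only non-spanning circuits present are the triples for pairs already processed") is stated and maintained at each step.
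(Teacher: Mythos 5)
Your proposal is correct and follows exactly the same approach as the paper: start with the rank-$3$ uniform matroid on $N$ vertex elements, add each edge element $e_{ij}$ via a principal extension by the rank-$2$ flat $\{v_{i},v_{j}\}$, and invoke \Cref{apogee}. The paper's proof is a two-sentence sketch; your version simply makes the inductive bookkeeping (that $\{v_{i},v_{j}\}$ remains a two-point flat throughout and that no unintended non-spanning circuits appear) explicit.
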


\begin{proof}
We note that $m(K_{N})$ can be constructed by starting with a
rank\dash $3$ uniform matroid, the elements
of which represent the vertices of $K_{N}$.
The elements representing edges are then added via
principal extensions.
The result now follows from \Cref{apogee}.
\end{proof}

\section{Representable matroids}

The next result is not surprising, and is implicitly utilised in the work of
Hlin\v{e}n\'{y} \cite{Hli06c} and Kr\'{a}l \cite{Kra12}.

\begin{theorem}
\label{rhesus}
Let \mbb{F} be a finite field.
The class of \mbb{F}\dash representable matroids
is efficiently pigeonhole.
\end{theorem}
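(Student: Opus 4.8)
The plan is to establish the three requirements of \Cref{yakuza-II} for the class of $\mbb{F}$-representable matroids, using a matrix as the succinct representation $\Delta$. The key linear-algebraic fact is the standard one underlying Hlin\v{e}n\'{y}'s approach: if $M$ is represented over $\mbb{F}$ by a matrix with columns indexed by $E(M)$, and $U\subseteq E(M)$ with $\lambda_M(U)\le\lambda$, then after row operations the representing matrix can be put in a block form in which the ``interaction'' between $U$ and $E(M)-U$ is mediated by a subspace of dimension at most $\lambda$. Concretely, let $V_U$ be the column space of the submatrix on $U$; then the image of $V_U$ under projection modulo the span of the columns on $E(M)-U$ has dimension $\lambda_M(U)\le\lambda$. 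For a subset $X\subseteq U$, whether $X\cup Z$ is independent, for $Z\subseteq E(M)-U$, depends only on (the rank of $X$ within the part of $V_U$ internal to $U$, together with) the coset/subspace data that $X$ contributes to this $\lambda$-dimensional ``boundary space.'' So I would define $X\approx_U X'$ to hold when $X$ and $X'$ determine the same pair (rank inside $U$, boundary subspace spanned in the $\le\lambda$-dimensional quotient). One checks this refines $\sim_U$: if the pairs agree, then for any $Z$ the rank computation $r_M(X\cup Z)$ comes out the same, hence $X\cup Z$ is independent iff $X'\cup Z$ is.

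Next I would bound the number of $\approx_U$-classes. The rank of $X$ inside $U$ ranges over $\{0,1,\dots,r_M(U)\}$, but more usefully it is determined together with the boundary data; the boundary data is a subspace of an $\mbb{F}$-vector space of dimension at most $\lambda$, and the number of subspaces of such a space is at most $\prod_{i=0}^{\lambda}(|\mbb{F}|^{\lambda}+1)$ or, more crudely, at most $2^{|\mbb{F}|^{\lambda}}$ — in any case a function $\pi(\lambda)$ of $\lambda$ and $\mbb{F}$ alone, independent of $|E(M)|$. (Some care is needed because the internal rank is genuinely unbounded; the point is that once the internal rank exceeds $r_M(U)-\lambda$ it no longer affects $\sim_U$, mirroring the argument in \Cref{tabard}, so one truncates it to a bounded range, exactly as in the uniform case.) Combining the bounded internal-rank range with the bounded number of boundary subspaces gives the required $\pi(\lambda)$.

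Finally, for the algorithm $M_\lambda$: given $(\Delta(M),U,X,X')$, one performs Gaussian elimination over $\mbb{F}$ on the matrix $\Delta(M)$ to extract a representation of the column space on $E(M)-U$, projects the columns of $X$ and of $X'$ modulo it, and compares the resulting subspaces (and truncated internal ranks). All of this is polynomial in $|E(M)|$ and the bit-length of $\Delta(M)$ — hence polynomial in $|E(M)|$ by the bound built into \Cref{umpire-II} — so condition (iii) of \Cref{yakuza-II} holds with a constant $c$ coming from the cost of Gaussian elimination. I would also remark that a matrix is a minor-compatible succinct representation (deleting columns and contracting via pivots), though that is not needed for this theorem itself.

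The main obstacle, and the step deserving the most care, is pinning down precisely what ``boundary data'' a subset $X\subseteq U$ must carry so that it is both (a) a genuine refinement of $\sim_U$ — i.e. the independence of $X\cup Z$ really is a function only of this data — and (b) drawn from a set whose size depends on $\lambda$ and $|\mbb{F}|$ only. The subtlety is that the naive invariant ``the subspace spanned by the columns of $X$'' lives in too large a space; one must quotient by the span of $E(M)-U$ (which brings dimension down to $\lambda_M(U)$) while separately tracking just enough rank information internal to $U$, and verify via the submodularity/rank identity $r_M(X\cup Z)=r_M(X)+r_M(Z)-(\text{overlap in the }\lambda\text{-dimensional boundary})$ that nothing else can be detected by any $Z$. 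Once that invariant is correctly isolated, conditions (i)–(iii) follow routinely, and the bound on the number of classes is just counting subspaces of a bounded-dimension space over a finite field.
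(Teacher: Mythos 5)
Your overall plan — take a matrix representation, attach to each $X\subseteq U$ a bounded ``boundary'' invariant, bound the number of classes, and verify via linear algebra that the invariant refines $\sim_U$ — is exactly the structure of the paper's proof. But the concrete linear-algebraic setup you give is wrong at the key step, and this is precisely the step you flagged as ``deserving the most care.''

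You claim that the image of $V_U$ (the column space on $U$) under projection modulo the column space $\closure{V}$ on $E(M)-U$ has dimension $\lambda_M(U)\le\lambda$. That is false. By the second isomorphism theorem the image is isomorphic to $\closure{U}/(\closure{U}\cap\closure{V})$, and by Grassmann's identity $\dim(\closure{U}\cap\closure{V})=r_M(U)+r_M(V)-r(M)=\lambda_M(U)$, so the image has dimension $r_M(U)-\lambda_M(U)$, which is not bounded by any function of $\lambda$. So ``the boundary subspace spanned by $X$ in the $\le\lambda$-dimensional quotient'' lives in a space whose dimension grows with $r_M(U)$, and the number of resulting classes is not bounded by $\pi(\lambda)$. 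The correct bounded-dimension object is the \emph{intersection} $\closure{U}\cap\closure{V}$ (dimension exactly $\lambda_M(U)$), not the image modulo $\closure{V}$, and the correct invariant attached to an independent $X\subseteq U$ is $\closure{X}\cap\closure{V}$, a subspace of $\closure{U}\cap\closure{V}$. This is what the paper uses.

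Once you adopt the right invariant, your worry about unbounded internal rank — and the truncation borrowed from the uniform-matroid argument — evaporates: internal rank need not be tracked at all. For independent $X\subseteq U$ and $Z\subseteq V$, $X\cup Z$ is dependent iff $\closure{X}\cap\closure{Z}\ne\emptyset$, and since $\closure{Z}\subseteq\closure{V}$ this is equivalent to $(\closure{X}\cap\closure{V})\cap\closure{Z}\ne\emptyset$; hence the pair $(\text{dependence status of }X,\ \closure{X}\cap\closure{V})$ already determines the $\sim_U$-class. (The paper establishes the refinement via a circuit-elimination argument rather than this rank identity, but the conclusion is the same.) The class-count bound then comes, as you intended, from counting subsets of the at most $(|\mbb{F}|^{\lambda}-1)/(|\mbb{F}|-1)$ points of $\closure{U}\cap\closure{V}$, and the polynomial-time computability is routine Gaussian elimination, as you say. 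So the gap is the image/intersection confusion; fix that and the rest of your outline goes through, and in fact simplifies.
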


\begin{proof}
Assume that $|\mbb{F}|=q$.
Let \mcal{M} be the class of \mbb{F}\dash representable matroids.
We consider the succinct representation $\Delta$ that
sends each matroid in \mcal{M} to an \mbb{F}\dash matrix representing it.
Let $M$ be a rank\dash $r$ matroid in \mcal{M}, and let $U$ be a subset of $M$.
We use $V$ to denote $E(M)-U$.
We identify $M$ with a multiset of points in the projective
geometry $P=\mathrm{PG}(r-1,q)$
(we lose no generality in assuming that $M$ is loopless).
If $X$ is a subset of $E(M)$, then \closure{X} will denote
its closure in $P$.

Assume that $\lambda_{M}(U)\leq \lambda$.
Grassman's identity tells us that
the rank of $\closure{U} \cap \closure{V}$ is equal to
$r(U)+r(V)-r(M)\leq \lambda$.
We define the equivalence relation $\approx_{U}$ so that if
$X$ and $X'$ are subsets of $U$, then
$X\approx_{U} X'$ if both $X$ and $X'$ are dependent, or
both are independent and
$\closure{X}\cap\closure{V}=\closure{X'}\cap\closure{V}$.
Deciding whether $X\approx_{U} X'$ is true requires only
elementary linear algebra, and it can certainly be accomplished
in time bounded by $O(|E(M)|^{c})$ for some constant $c$.
Since $\closure{U} \cap \closure{V}$ is a subspace of $P$ with
affine dimension at most $\lambda-1$,
it contains at most $(q^{\lambda}-1)/(q-1)$ points.
Therefore  $2^{q^{\lambda-1}+q^{\lambda-2}+\cdots+1}+1$
is a crude upper bound on the number of
$(\approx_{U})$\dash classes.
It remains only to show that $\approx_{U}$ refines $\sim_{U}$.

Assume that $X\approx_{U} X'$, and yet
$X\cup Z$ is independent while $X'\cup Z$ is dependent,
where $Z$ is a subset of $V$.
Then $X$ is independent, so $X'$ is independent also.
Let $C$ be a circuit contained in $X'\cup Z$.
As both $X'$ and $Z$ are independent, neither
$X'\cap C$ nor $Z\cap C$ is empty.
Now the rank of $\closure{X'\cap C} \cap \closure{Z\cap C}$
is
\[
r(X'\cap C)+r(Z\cap C)-r(C)=|X'\cap C|+|Z\cap C|-(|C|-1)=1.
\]
Let $c$ be the point of $P$ that is in
$\closure{X'\cap C} \cap \closure{Z\cap C}$.
Since $c$ is in $\closure{X'}\cap \closure{V}$, our assumption tells us it is
also in $\closure{X}\cap \closure{V}$.

Assume $c$ is not in $X$.
Since it is in $\closure{X}$, we can let $C_{X}$ be a circuit
contained in $X\cup c$ that contains $c$.
If $c$ is in $Z$, then $X\cup Z$ contains $C_{X}$, and we have
a contradiction, so $c$ is not in $Z$.
We let $C_{Z}$ be a circuit contained in $Z\cup c$ that contains $c$.
Circuit elimination between $C_{X}$ and $C_{Z}$ shows that
$X\cup Z$ contains a circuit, and again we have a contradiction.
Therefore $c$ is in $X$.
If $c$ is not in $Z$, then $Z\cup c\subseteq X\cup Z$ contains a
circuit.
Therefore $c$ is in $Z$.
As $X$ and $Z$ are disjoint subsets of $E(M)$,
but $c$ is identified with elements of both, we conclude that
$M$ contains a parallel pair, with one element in $X$, and
the other in $Z$.
Again $X\cup Z$ is dependent, and we have a final contradiction.
\end{proof}

Hlin\v{e}n\'{y}'s Theorem \cite{Hli06c} follows immediately from
\Cref{lounge-II,rhesus}.
We note that proofs of Hlin\v{e}n\'{y}'s Theorem can also be derived from the
works by Kr\'{a}l \cite{Kra12} and Strozecki \cite{Str11}.

\begin{proposition}
\label{yippie}
Let \mbb{K} be an infinite field.
Then the class of \mbb{K}\dash representable matroids
is not pigeonhole.
\end{proposition}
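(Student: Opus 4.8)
The plan is to verify that the class of \mbb{K}\dash representable matroids satisfies the hypotheses of \Cref{nubbin}: that it contains every rank\dash $3$ uniform matroid, and that it is closed under principal extensions. The conclusion is then immediate. That $U_{3,n}$ is \mbb{K}\dash representable for every $n$ is where infiniteness is first used: choose distinct scalars $a_{1},\dots,a_{n}\in\mbb{K}$ and represent the $i$-th element by the point $(1,a_{i},a_{i}^{2})$ of $\mathrm{PG}(2,\mbb{K})$; any three of these vectors have nonzero (Vandermonde) determinant, so every triple is independent. (The cases $n\le 2$ are trivial.)

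The substantive step is closure under principal extensions. Let $M$ be \mbb{K}\dash representable of rank $r$, with $F$ a flat of $M$; we may assume $M$ is loopless, and the case $r(F)=0$ (which produces a loop) is trivial, so assume $r(F)\ge 1$. Fix a representation of $M$ as a multiset of points of $P=\mathrm{PG}(r-1,\mbb{K})$, and for $X\subseteq E(M)$ write $\closure{X}$ for its projective span. For every $X\subseteq E(M)$ with $F\not\subseteq\closure{X}$, the set $\closure{X}\cap\closure{F}$ is a proper projective subspace of $\closure{F}$, and there are only finitely many such subsets. Since a projective space over an infinite field is not a union of finitely many proper subspaces, I would choose a point $e\in\closure{F}$ lying outside every $\closure{X}\cap\closure{F}$ of this kind, and let $M'$ be the single-element extension of $M$ obtained by placing a new ground-set element at $e$.

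It then remains to check that $M'$ is the principal extension of $M$ by $F$. First, $\closure{F\cup e}=\closure{F}$ because $e\in\closure{F}$, and the only elements of $E(M)$ lying in $\closure{F}$ are those of $F$, so $F\cup e$ is a flat of $M'$. Second, if $X\subseteq E(M)$ spans $e$ in $M'$, then $e\in\closure{X}\cap\closure{F}$, which by the choice of $e$ forces $F\subseteq\closure{X}$; hence $X$ spans $F$, and, since also $e\in\closure{X}$, it spans $F\cup e$. Thus $M'$ is a principal extension of $M$ by $F$, and it is manifestly still \mbb{K}\dash representable, so the class is closed under principal extensions. \Cref{nubbin} now finishes the proof. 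The only point needing genuine care is this last verification, and in particular the degenerate case $r(F)=1$, where $\closure{F}$ is a single point and $e$ is forced to be parallel to the elements of $F$; but there the defining conditions of a principal extension again hold by inspection. (Alternatively one could bypass \Cref{nubbin} and show directly that $m(K_{N})$ is \mbb{K}\dash representable for all $N$ — via essentially the same generic-point construction — and then invoke \Cref{apogee}.)
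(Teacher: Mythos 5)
Your argument follows the same route as the paper: apply \Cref{nubbin} after verifying that the class of \mbb{K}\dash representable matroids contains all rank\dash $3$ uniform matroids and is closed under principal extensions. The only difference is that the paper cites \cite[Lemma 2.1]{MNW09} for closure under principal extensions, whereas you reprove that fact directly (correctly) via the standard observation that a projective space over an infinite field is not a finite union of proper subspaces.
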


\begin{proof}
This follows almost immediately from \Cref{nubbin}
and \cite[Lemma 2.1]{MNW09}.
\end{proof}

\section{Fundamental transversal matroids}

Transversal matroids can be thought of geometrically
as those obtained by placing points freely on the faces of a simplex.
A transversal matroid is \emph{fundamental} if there is a point
placed on each vertex of that simplex.
More formally, a transversal matroid is fundamental if it has a basis,
$B$, such that $r(B\cap Z)=r(Z)$, for every cyclic flat $Z$ (see \cite{BKM11}).
In this case $B$ is a basis consisting of points located at the vertices of the simplex.
From this characterisation it is easy to see that the dual of a fundamental
transversal matroid is also fundamental.

In this, and subsequent sections, we will show that three subclasses
of transversal matroids are pigeonhole: fundamental transversal matroids (\Cref{yuppie}),
lattice path matroids (\Cref{copeck}), and bicircular matroids (\Cref{nibble}).
To start with, we prove that we cannot extend these results to the entire
class of transversal matroids.

\begin{proposition}
\label{hiccup}
The class of transversal matroids is not pigeonhole.
\end{proposition}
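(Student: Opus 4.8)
The plan is to exhibit transversal matroids of bounded branch-width with unbounded decomposition-width, and the natural source of such examples is the family $m(K_N)$ already used in \Cref{apogee}: by that lemma it suffices to show that each $m(K_N)$ — or a closely related rank-$3$ sparse paving matroid built from a large graph — is transversal. I would begin by recalling the standard criterion for a rank-$r$ matroid to be transversal: $M$ is transversal if and only if it has a presentation by $r$ sets, equivalently (for the purpose of constructing one) it suffices to cover the ground set by $r$ cocircuits, or to find $r$ ``cyclic flats'' whose complements are independent and which together present $M$. For a rank-$3$ sparse paving matroid the hyperplanes are exactly the non-spanning circuits (the lines with three or more points) together with the pairs of points lying on no long line; so what I really need is to choose three hyperplanes $H_1,H_2,H_3$ of $m(K_N)$ whose union is the whole ground set, since then the sets $A_i = E(M)\setminus H_i$ form a presentation (a transversal presentation of a rank-$r$ matroid is dual to an antichain of $r$ hyperplanes covering the ground set).

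The main step, then, is the combinatorial task of covering the ground set $\{v_1,\dots,v_N\}\cup\{e_{ij}\}$ of $m(K_N)$ by three hyperplanes. The long lines are precisely the triples $\{v_i,e_{ij},v_j\}$. A hyperplane either is one such triple, or is a flat containing no long line — i.e.\ a set meeting each triple $\{v_i,e_{ij},v_j\}$ in at most one point, closed in the matroid. I would look for three such ``independent-ish'' flats: for instance, partition the vertex set as $V_1\cup V_2\cup V_3$ into three nearly equal parts, and let $H_k$ consist of the vertices \emph{not} in $V_k$ together with all edges $e_{ij}$ whose endpoints both lie outside… — one must check closure and the covering condition carefully, and it may be that three hyperplanes do not suffice, in which case I would instead work with $m(G)$ for a suitably chosen dense graph $G$ (still containing large $K_N$ as a subgraph on a vertex subset, so that the argument of \Cref{apogee} still forces unbounded decomposition-width) whose matroid manifestly admits a small cocircuit cover. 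Concretely, if $G$ is bipartite with parts $P,Q$ then the long lines of $m(G)$ all have the shape $\{p,e,q\}$ with $p\in P$, $q\in Q$, and the three sets $P$, $Q$, and $\{e_{pq}\}\cup P$ (or similar) have a good chance of being hyperplanes covering everything; and a bipartite graph on $N+N$ vertices can still contain arbitrarily large complete \emph{bipartite} subgraphs, which is enough to rerun the degree/matching dichotomy in the proof of \Cref{apogee}.

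The hard part will be exactly this: verifying that the chosen sets are flats of the sparse paving matroid and that their complements genuinely present the matroid (equivalently, that every basis is a partial transversal of the $A_i$ and every non-basis is not). Once a transversal presentation of an appropriate $m(G)$ with $G$ containing unboundedly large complete (bipartite) subgraphs is in hand, the conclusion is immediate: \Cref{apogee} (or its bipartite analogue, proved by the same Erd\H{o}s–Rado/Abbott–Hanson–Sauer argument) shows these rank-$3$ transversal matroids have unbounded decomposition-width while branch-width stays at most four, so the class of transversal matroids is not pigeonhole. I would close by remarking that this also recovers \Cref{yellow}, since transversal matroids are gammoids.
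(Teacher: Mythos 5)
Your approach cannot work, because $m(K_N)$ is not a transversal matroid for $N\geq 4$, and neither is $m(G)$ for any graph $G$ with more than three edges. If $(A_1,A_2,A_3)$ presents a rank-$3$ transversal matroid, then by Hall's condition every three-element circuit is disjoint from some $A_k$ and therefore lies inside the flat $E\setminus A_k$, which has rank at most $2$; since any two distinct long lines of $m(G)$ together span the whole rank-$3$ matroid, no two of them can lie in the same $E\setminus A_k$, so $m(G)$ has at most three long lines if it is transversal. But $m(K_N)$ has $\binom{N}{2}$ long lines and $m(K_{N,N})$ has $N^2$. The Erd\H{o}s--Rado dichotomy behind \Cref{apogee} needs $G$ to be dense, which is precisely what forces $m(G)$ out of the transversal class. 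Your stated criterion is also false as a sufficient condition: covering $E(M)$ by $r$ hyperplanes $H_1,\ldots,H_r$ does not make $(E\setminus H_1,\ldots,E\setminus H_r)$ a presentation --- in $M(K_4)$, three of the four triangles cover all six edges, yet $M(K_4)$ is not transversal.

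The missing idea is duality. The matroid $m(K_N)$ \emph{is} cotransversal --- it is a strict gammoid, being a sequence of principal extensions of $U_{3,N}$ --- but it is not transversal. The paper's proof applies \Cref{dynamo-II} to reduce the proposition to the class of cotransversal matroids (strict gammoids), then observes that strict gammoids contain all rank-$3$ uniform matroids and are closed under principal extensions (add a new source vertex with arcs into the chosen flat). This places the class under \Cref{nubbin}, which is where \Cref{apogee} and the matroids $m(K_N)$ finally enter. Your instinct to use $m(K_N)$ was sound, but it has to come in on the dual side of the argument.
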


\begin{proof}
By \Cref{dynamo-II}, we can prove that the class of transversal matroids
is not pigeonhole by proving the same statement for the class of
cotransversal matroids.
Certainly this class contains all rank\dash $3$ uniform matroids.
Recall that the matroid $M$ is cotransversal if and only if it is a
\emph{strict gammoid} \cite{IP73}.
This means that there is a directed graph $G$ with vertex set $E(M)$,
and a distinguished set, $T$, of vertices, where
$X\subseteq E(M)$ is independent in $M$ if and only if
there are $|X|$ vertex-disjoint directed paths, each of them starting
with a vertex in $X$ and terminating with a vertex in $T$.
Assume that $G$ is such a directed graph, and that $F$ is a flat of $M$.
Create the graph $G'$ by adding the new vertex $e$, and
arcs directed from $e$ to each of the vertices in $F$.
It is an easy exercise to verify that if $M'$ is the strict
gammoid corresponding to $G'$, then $M'$ is a principal
extension of $M$ by $F$.
This demonstrates that the class of cotransversal matroids is
closed under principal extensions, so the \namecref{hiccup}
follows by \Cref{nubbin}.
\end{proof}

\begin{remark}
\label{yellow}
From \Cref{hiccup} we see that any class of matroids
containing transversal matroids is not pigeonhole.
In particular, the class of gammoids is not pigeonhole.
\end{remark}

Let $G$ be a bipartite graph, with bipartition $A\cup B$.
There is a fundamental transversal matroid, $M[G]$, with
$A\cup B$ as its ground set, where $X\subseteq A\cup B$ is
independent if and only if there is a matching, $W$, of $G$ such that
$|W|=|X\cap A|$ and each edge in $W$ joins a vertex in $X\cap A$
to a vertex in $B-X$.
In this case we say that $W$ \emph{certifies} $X$ to be independent.
This definition implies that $B$ is a basis of $M[G]$,
and $r(B\cap Z)=r(Z)$ for any cyclic flat $Z$.
Moreover, any fundamental transversal matroid can be represented in this way.
Note that we can represent $M[G]$ with a standard bipartite presentation
by adding an auxiliary vertex, $b'$, for each vertex $b\in B$, and
making $b'$ adjacent only to $b$.
We then swap the labels on $b$ and $b'$.
The transversal matroid on the ground set $A\cup B$
represented by this bipartite graph is equal to $M[G]$.

\begin{theorem}
\label{yuppie}
The class of fundamental transversal matroids is efficiently
pigeonhole.
\end{theorem}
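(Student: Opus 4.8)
The plan is to verify conditions (i)--(iii) of \Cref{yakuza-II} directly, using the succinct representation $\Delta$ that sends a fundamental transversal matroid to a bipartite graph $G$, together with its bipartition $A\cup B$, for which the matroid equals $M[G]$; since membership in $\mcal{I}(M[G])$ is a bipartite matching question, this is a legitimate succinct representation. The first step is to record the rank function: for $X=X_{A}\cup X_{B}$ with $X_{A}\subseteq A$ and $X_{B}\subseteq B$, the definition of $M[G]$ gives $r_{M[G]}(X)=|X_{B}|+\nu_{G}(X_{A},\,B\setminus X_{B})$, where $\nu_{G}(S,T)$ denotes the maximum size of a matching of $G$ between $S\subseteq A$ and $T\subseteq B$; in particular $r(M)=|B|$. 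Writing $V=E(M)\setminus U$ and using that $U\cap B$ and $V\cap B$ partition $B$, this yields
\[
\lambda_{M}(U)=\nu_{G}(U_{A},V_{B})+\nu_{G}(V_{A},U_{B}),
\]
where subscripts denote intersection with $A$ or $B$. Hence, whenever $\lambda_{M}(U)\le\lambda$, K\"{o}nig's theorem produces minimum vertex covers of the bipartite graphs $G[U_{A},V_{B}]$ and $G[V_{A},U_{B}]$ of total size $\lambda_{M}(U)\le\lambda$; I would assemble these into a \emph{port set} $\partial=\partial U\cup\partial V$, with $\partial U\subseteq U$, $\partial V\subseteq V$, $|\partial|\le\lambda$, having the crucial property that every edge of $G$ with one end in $U$ and the other in $V$ meets $\partial$ --- equivalently, no vertex of $U\setminus\partial U$ has a $G$-neighbour in $V\setminus\partial V$. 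All of this is computable from $(\Delta(M),U)$ in polynomial time (one maximum-matching computation on each side, then read off a minimum cover).

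The heart of the argument is a factorisation of the independence predicate through the ports. Given $X\subseteq U$ and $Z\subseteq V$, a matching of $G$ witnessing $X\cup Z\in\mcal{I}(M)$ splits into a set $N_{c}$ of edges crossing between $U$ and $V$, a matching inside $U$, and a matching inside $V$; moreover $|N_{c}|\le\nu_{G}(U_{A},V_{B})+\nu_{G}(V_{A},U_{B})=\lambda_{M}(U)\le\lambda$, since the two types of crossing edge form matchings in $G[U_{A},V_{B}]$ and $G[V_{A},U_{B}]$. Because every crossing edge meets $\partial$ and the edges of $N_{c}$ are pairwise disjoint, $N_{c}$ is described by a partial matching pairing ports of $\partial$ with (possibly non-port) vertices on the opposite side. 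I would therefore define the \emph{interface configuration} of $N_{c}$ to be the bounded packet of data ``which ports of $\partial U\cap A$ are matched out of $U$; which ports of $\partial U\cap B$ are matched into $U$; and symmetrically which ports of $\partial V$ are matched, and in which direction'' --- all recorded as subsets of the $\le\lambda$-element set $\partial$, so that there are at most a function $f(\lambda)$ of configurations. For $X\subseteq U$, let $g_{X}$ be the set of configurations $c$ such that the $U$-side demand remaining after deleting the ports prescribed by $c$ can be matched, inside $G$ restricted to $U$ together with the $\partial V$-vertices named by $c$, onto the $U$-side supply remaining after $c$. Then $X\cup Z\in\mcal{I}(M)$ holds if and only if there is a configuration $c$ that is \emph{compatible} with $Z$ (the $\partial V$-supply it uses lies in $V_{B}\setminus Z_{B}$, the $\partial V$-demand it imposes lies in $Z_{A}$) with $c\in g_{X}$ and the dual condition holding on the $V$-side for $Z$. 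I would then declare $X\approx_{U}X'$ iff $g_{X}=g_{X'}$.

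It remains to check that $\approx_{U}$ meets \Cref{yakuza-II}. It refines $\sim_{U}$: if $g_{X}=g_{X'}$ then the displayed characterisation gives $X\cup Z\in\mcal{I}(M)\iff X'\cup Z\in\mcal{I}(M)$ for every $Z\subseteq V$. It has at most $2^{f(\lambda)}$ classes. And $g_{X}$ is computed by solving $f(\lambda)$ bipartite matching instances on $G$, so $\approx_{U}$ is decidable in time $O(f(\lambda)\,|E(M)|^{c})$. Finally, \Cref{lounge-II} delivers the model-checking algorithm. The step I expect to be the main obstacle is the bookkeeping in the interface configuration: a crossing edge may join a port of $\partial U$ to a \emph{non-port} vertex deep in $V$ (and conversely), so one cannot afford to record which individual vertices of $X$ get matched across. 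The remedy is to treat the $\le\lambda$ ports as named channels through which the two sides communicate, let each side optimise the matching of its own deep vertices independently, and then prove a gluing lemma: a matching of $G$ of the required form exists exactly when the two one-sided matching problems, glued along a common interface configuration, are simultaneously solvable. Once the interface is set up so that this gluing lemma is correct, the verification of conditions (i)--(iii) is routine.
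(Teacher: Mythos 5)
Your outline matches the paper's: represent $M$ by the bipartite graph $G$ with bipartition $A\cup B$ ($B$ a basis); observe that the $U$\dash to\dash $V$ crossing edges form a bipartite graph whose maximum matching has size exactly $\lambda_{M}(U)$ (this is the paper's Claim inside the proof, and your formula $\lambda_M(U)=\nu_G(U_A,V_B)+\nu_G(V_A,U_B)$ is a correct derivation of it); extract a K\H{o}nig vertex cover of size at most $\lambda$ (your port set $\partial$, the paper's $S$); define signatures at the cover; and declare $X\approx_U X'$ when the achievable signature sets coincide. The computational remarks are all sound.

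The gap is precisely the one you flag, and it is not routine to close. Your configuration $c$ records only which ports of $\partial$ are touched by crossing edges ``and in which direction,'' and your $g_X$ then asks a one-sided matching question. This misses two kinds of information that the paper's signature $(S_1,\mcal{S}_2,S_3,S_4)$ carries. First, you must record which ports of $B\cap U\cap\partial$ are consumed on the $U$\dash side by $X$ itself or by $X$'s internal matching (the paper's $S_1$), since otherwise the $V$\dash side cannot tell whether such a port is free to receive its crossing edge; a bare ``which ports are crossed'' set does not see this. Second, and this is the non-obvious part: when a port $a\in\partial\cap A$ on the $V$\dash side must be matched to a \emph{deep} vertex of $B\cap U$ (not in $\partial$), the $U$\dash side must reserve a deep vertex outside $X$ and outside its own matching to serve $a$. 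Which set of such demands the $U$\dash side can serve depends on the chosen certifying matching for $X$ and cannot be summarised by a single subset of $\partial$; it must be recorded as a \emph{family} of subsets of $A\cap V\cap\partial$ (the paper's $\mcal{S}_2$). The compatibility condition for gluing is then $T_3\in\mcal{S}_2$ and $S_3\in\mcal{T}_2$ together with the two disjointness conditions, and the proof that this characterises independence of $X\cup Z$ (the paper's Claims~\ref{utopia} and~\ref{dialer}) is a careful surgery on matchings. Your proposal does not contain an analogue of $\mcal{S}_2$, and without it the claim ``$g_X=g_{X'}$ implies $X\sim_U X'$'' does not follow; it is exactly the ``gluing lemma'' you defer.
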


\begin{proof}
We consider the succinct representation of fundamental transversal
matroids that involves representing such a matroid with a
bipartite graph.
Let $M=M[G]$ be a fundamental transversal matroid, where $A\cup B$
is a bipartition of the bipartite graph $G$, and $B$ is a basis of $M$.
Let $(U,V)$ be a partition of $A\cup B$, and assume that
$\lambda_{M}(U)\leq \lambda$.
Our goal will be to construct an equivalence relation $\approx$ on the subsets of
$U$ such that $\approx$ satisfies the conditions in \Cref{yakuza-II}.

Let $H$ be the subgraph of $G$ induced by edges that join
vertices in $A\cap U$ to vertices in $B\cap V$, and vertices
in $A\cap V$ to vertices in $B\cap U$.

\begin{claim}
\label{hiatus}
Any matching of $H$ contains at most $\lambda$ edges.
\end{claim}

\begin{proof}
Let $W$ be a matching in $H$.
Let $A_{U}$ and $A_{V}$, respectively, be the set of vertices
in $A\cap U$ (respectively $A\cap V$) that are incident with
an edge in $W$.
Therefore $|A_{U}|+|A_{V}|=|W|$.
If we restrict $W$ to edges incident with vertices in $A\cap U$,
then it certifies that $(B\cap U)\cup A_{U}$ is an
independent subset of $U$.
Similarly, $(B\cap V)\cup A_{V}$ is an
independent subset of $V$.
Therefore
\[
\lambda\geq
r(U)+r(V)-r(M)
\geq
|B\cap U| +|A_{U}|
+|B\cap V| +|A_{V}|
-|B|
=|W|.\qedhere
\]
\end{proof}

We can find a maximum matching of $H$,
using one of a number of polynomial-time algorithms.
It follows from K\H{o}nig's Theorem \cite{Kon31} that $H$
contains a vertex cover, $S$, such that $|S|\leq \lambda$.
Furthermore, K\H{o}nig's Theorem is constructive: given a
maximum matching of $H$, we can find $S$ in polynomial time.
From this point onwards, we regard $S$ as being fixed.

Let $X$ be an independent subset of $U$, and let $W$ be a
matching that certifies its independence.
We will construct a \emph{signature}, $\mcal{C}(X,W)$.
Signatures of subsets of $V$ will be defined symmetrically at the same time,
so in fact we let $\{P,Q\}$ be $\{U,V\}$, and we let $X$ be an
independent subset of $P$, with $W$ a matching certifying the
independence of $X$.
Recall that this means that $|W|=|X\cap A|$ and each edge of $W$
joins a vertex in $X\cap A$ to a vertex in $B-X$.

The signature $\mcal{C}(X,W)$ is a sequence
$(S_{1},\mcal{S}_{2},S_{3},S_{4})$, where
$S_{1}$, $S_{3}$, and $S_{4}$ are subsets of $B\cap P\cap S$,
$A\cap P\cap S$, and $B\cap Q\cap S$, respectively, and
$\mcal{S}_{2}$ is a collection of subsets of $A\cap Q\cap S$.
We define $\mcal{C}(X,W)$ as follows.
\begin{enumerate}[label=\textup{(\roman*)}]
\item $S_{1}$ is the set of vertices in $B\cap P\cap S$ that are
either in $X$ or incident with an edge in $W$.
\item A subset $Z\subseteq A\cap Q\cap S$ is in $\mcal{S}_{2}$
if and only if there is a matching $W'$ satisfying
$W\subseteq W'$ and $|W'-W|=|Z|$, where each edge in $W'-W$
joins a vertex in $Z$ to a vertex in $(B\cap P)-(S\cup X)$.
Note that $\mcal{S}_{2}$ is closed under subset inclusion.
\item $S_{3}$ is the set of vertices in $A\cap P\cap S$ that are
joined by an edge of $W$ to a vertex in $(B\cap Q)-S$.
Note that any vertex in $S_{3}$ is in $X$, since it is in $A$ and incident with
an edge of $W$.
\item $S_{4}$ is the set of vertices in $B\cap Q\cap S$
that are incident with an edge in $W$.
\end{enumerate}
\begin{figure}[htb]
	\centering
	\includegraphics[scale=1.1]{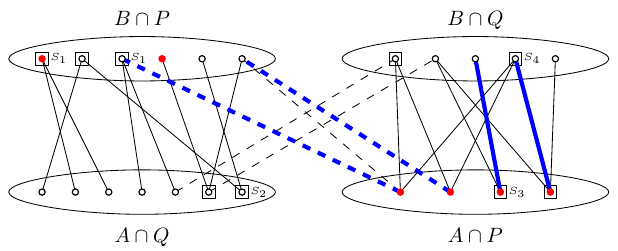}
	\caption{Defining a signature.}
	\label{fig2}
\end{figure}
We illustrate these definitions in \Cref{fig2}.
This shows a graph, $G$, with bipartition $A\cup B$,
and a partition, $(P,Q)$, of $A\cup B$.
The edges not in $H$ cross the diagram diagonally, and are drawn with
dashed lines, while the unbroken edges are the edges of $H$.
In this example the vertex cover, $S$, contains nine
vertices, which are marked with squares.
Observe that every edge of $H$ is incident with a vertex in $S$.
The set $X\subseteq P$ is marked by filled vertices.
Its independence is certified by the matching $W$,
which is drawn with heavy lines.
Vertices in the sets $S_{1}$, $S_{3}$, and $S_{4}$ are marked.
The sets in the family $\mcal{S}_{2}$ are the empty set
and the singleton set containing the vertex marked $S_{2}$.

For any independent subset $X\subseteq U$, let $\mcal{C}(X)$ be the set
\[\{\mcal{C}(X,W)\colon W\ \text{is a matching certifying that}\ X\
\text{is independent}\}.\]
Now we define the equivalence relation $\approx_{U}$.
If $X$ and $X'$ are subsets of $U$, then say that
$X\approx_{U} X'$ if both $X$ and $X'$ are dependent,
or both are independent and $\mcal{C}(X)=\mcal{C}(X')$.
Thus independent sets $X$ and $X'$ are equivalent under $\approx_{U}$
if they have exactly the same signatures.

We can choose a signature by choosing three subsets of $S$, and a family of
subsets of $S$.
Recalling that $|S| \leq \lambda$, we see that the number of signatures is at
most $(2^{\lambda})^{3}\cdot 2^{2^{\lambda}} = 2^{3\lambda+2^{\lambda}}$.
Therefore the number of $(\approx_{U})$\dash classes is no more than
\begin{equation}
\label{pi-bound}
\pi(\lambda) = 2^{2^{3\lambda+2^{\lambda}}}+1.
\end{equation}
Now we have established that condition (ii) in \Cref{yakuza-II} holds, where the
function $\pi(\lambda)$ is defined by \eqref{pi-bound}.
To complete the proof that the class of fundamental transversal matroids
is efficiently pigeonhole, we must establish that we can compute the
equivalence relation in time bounded by $O(\pi(\lambda)|E(M)|^{c})$ and that
$\approx_{U}$ refines $\sim_{U}$.
Let us first consider the task of computing $\approx_{U}$.
Recall that $\{P,Q\} = \{U,V\}$.

\begin{claim}
\label{yabber}
Let $X$ be an independent subset of $P$.
Let $(S_{1},Z,S_{3},S_{4})$ be a sequence of sets from
$B\cap P\cap S$, $A\cap Q\cap S$, $A\cap P\cap S$, and $B\cap Q\cap S$.
We can test in polynomial time whether there is a matching $W$,
certifying the independence of $X$, such that
$\mcal{C}(X,W)=(S_{1},\mcal{S}_{2},S_{3},S_{4})$ where
$Z$ is in $\mcal{S}_{2}$.
\end{claim}

\begin{proof}
To start with, we check that $S_{1}$ contains $X\cap B\cap P\cap S$ and that
$S_{3}$ is contained in $X$.
If this is not the case, then we halt and return the answer NO, so now
we assume that $X\cap B\cap P\cap S\subseteq S_{1}$ and
$S_{3} \subseteq X$.

Our strategy involves constructing an auxiliary graph, $G'$, by deleting
vertices and edges from $G$.
The construction of $G'$ is best described by the diagram in \Cref{fig8}.
Any vertex not shown in this diagram is deleted in the construction of $G'$.
Thus from $B\cap P$ we delete any vertex in $X$ and any vertex in
$S-S_{1}$.
From $A\cap Q$ we delete any vertex not in $Z$.
From $A\cap P$ we delete those vertices not in $X$.
Note that the assumption in the first paragraph of this proof means that
we have not deleted any vertex in $S_{3}$.
In $B\cap Q$, we delete those vertices in $S-S_{4}$.

\begin{figure}[htb]
\centering
\includegraphics[scale=1.1]{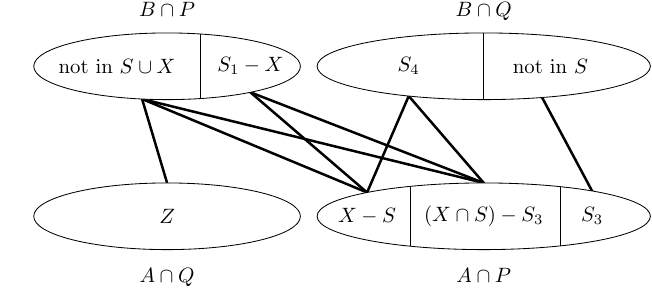}
\caption{The construction of $G'$.}
\label{fig8}
\end{figure}

Next we delete any edge of $G$ that is not represented by an edge in \Cref{fig8}.
For example, we delete any edge joining a vertex in $S_{3}$ to a vertex outside
of $(B\cap Q)-S$.
Obviously $G'$ can be constructed in time bounded by $O(|E(M)|^{c})$,
for some constant $c$.

\begin{subclaim}
\label{beacon}
The following statements are equivalent:
\begin{enumerate}[label=\textup{(\roman*)}]
\item There is a matching, $W$, of $G$, such that $W$ certifies the independence of
$X$ and $\mcal{C}(X,W)=(S_{1},\mcal{S}_{2},S_{3},S_{4})$ with $Z\in \mcal{S}_{2}$.
\item $G'$ has a matching incident with every vertex in $(X\cup Z)\cap A$
and $(S_{1}-X)\cup S_{4}$.
\end{enumerate}
\end{subclaim}

\begin{proof}
Assume (i) holds.
Then $|W|=|X\cap A|$, and every vertex in $X\cap A$ is incident with an edge of $W$.
Furthermore, every edge of $W$ joins a vertex in $X\cap A$ to a vertex in $B-X$.
Since $Z$ is in $\mcal{S}_{2}$, we can let $W'$ be a matching such that
$W\subseteq W'$, $|W'-W|=|Z|$, and each edge of $W'-W$ joins a vertex in $Z$ to
a vertex in $(B\cap P)-(S\cup X)$.
We will show that every edge of $W'$ is in $G'$, and hence $W'$ is a matching of
$G'$.

Let $ab\in W'$ be an edge joining $a\in A$ to $b\in B$.
Certainly every edge of $W'-W$ is an edge of $G'$.
Therefore we will assume that $ab$ is in $W$.
Then $a$ is in $X$, and hence in $P$.
Note that $b$ is not in $X$, for no edge of $W$ joins two vertices in $X$.
Assume that $a$ is in $X-S$.
If $b$ is in $P$, then it is either in $(B\cap P)-(S\cup X)$, or
it is in $S$.
In the latter case, $b$ is in $S_{1}$ by the definition of $S_{1}$.
In either case, $ab$ is an edge of $G'$.
Now assume $b$ is in $Q$.
Then $b$ must be in $S$, or else $ab$ is an edge of
$H$ not incident with a vertex in $S$.
In this case $b$ is in $S_{4}$ by definition, so $ab$ is an edge of $G'$.

Next assume $a$ is in $(X\cap S)-S_{3}$.
Assume $b$ is in $P$.
Then either $b$ is in $(B\cap P)-(S\cup X)$, or it is in $S_{1}-X$.
In either case $ab$ is an edge of $G'$.
Now assume $b$ is in $Q$.
If $b$ is not in $S$, then the definition of $S_{3}$ implies $a\in S_{3}$, since $a$ is in $S$.
But this contradicts the assumption $a\in (X\cap S)-S_{3}$.
Therefore $b$ is in $S$.
This means $b$ is in $S_{4}$, so $ab$ is an edge of $G'$.

Finally, we assume that $a$ is in $S_{3}$.
Then the definition of $S_{3}$ means that $b$ is in $(B\cap Q)-S$, and again $ab$ is an edge of $G'$.
Now we have shown that $W'$ is a matching of $G'$.
Every vertex in $(X\cup Z)\cap A$ is incident with an edge in $W'$, and the same statement
is true for vertices in $(S_{1}-X)\cup S_{4}$, by the definitions of $S_{1}$
and $S_{4}$.
Therefore (ii) holds.

Now assume (ii) holds.
Let $W'$ be a matching of $G'$ such that each vertex in $(X\cup Z)\cap A$ or
$(S_{1}-X)\cup S_{4}$ is incident with an edge of $W'$.
Let $W$ be the set of edges in $W'$ incident with vertices in $X\cap A$.
There is no vertex in $X\cap B$ contained in $G'$, so it immediately follows that
in $G$, the matching $W$ certifies the independence of $X$.
Every vertex in $S_{1}-X$ is incident with an edge of $W$, and no vertex of
$(B\cap P \cap S)-S_{1}$ is (since these vertices are not in $G$).
Therefore in $\mcal{C}(X,W)$, the first entry is $S_{1}$, as desired.
Every edge of $W'-W$ joins a vertex in $Z$ to a vertex in
$(B\cap P)-(S\cup X)$, so $W'$ certifies that $Z$ belongs to the second entry of
$\mcal{C}(X,W)$.
Because $S_{3}$ is a subset of $X$ we see that any vertex in $S_{3}$
is matched by $W$ to a vertex in $(B\cap Q)-S$.
Furthermore no vertex in $(X\cap S\cap A)-S_{3}$ is, by
the construction of $G'$.
Therefore the third entry of $\mcal{C}(X,W)$ is equal to $S_{3}$.
Finally, every vertex in $S_{4}$ is incident with an edge in $W$, and
no vertex of $(B\cap Q\cap S)-S_{4}$ is (since these vertices are not in $G'$).
Therefore $\mcal{C}(X,W)=(S_{1},\mcal{S}_{2},S_{3},S_{4})$, where $Z$ is in
$\mcal{S}_{2}$, so (i) holds.
\end{proof}

Now we complete the proof of \Cref{yabber}.
To test whether $W$ exists, we find a maximum-sized matching of $G'$,
using standard methods.
If this matching is incident with all the vertices in $(X\cup Z)\cap A$
(and is thus complete), then we continue, otherwise we return NO.
It is easy to see that we can use alternating-path methods to test
whether there is a complete matching that matches all the vertices in
$(S_{1}-X)\cup S_{4}$ as well as those in $(X\cup Z)\cap A$.
We return YES if such a complete matching exists, and NO otherwise,
observing that \Cref{beacon} justifies the correctness of this algorithm.
\end{proof}

To test whether $X\approx_{U} X'$, we first test whether $X$ and $X'$
are independent.
We can certainly test this in polynomial-time via a standard matching algorithm.
Assuming both $X$ and $X'$ are independent, we simply go through each
possible certificate, and check that each certificate belongs to
$\mcal{C}(X)$ if and only if it belongs to $\mcal{C}(X')$.
By using \Cref{yabber}, we can accomplish this
in time bounded by $O(\pi(\lambda)|E(M)|^{c})$, for some
constant $c$, where $\pi(\lambda)$ is the function in \eqref{pi-bound}.

Now our final task in the proof of \Cref{yuppie} is to show that
$\approx_{U}$ refines $\sim_{U}$.
To this end, we assume that $X\subseteq U$ and $Y\subseteq V$
are independent subsets of $M$.
Let $S_{X}=(S_{1},\mcal{S}_{2},S_{3},S_{4})$ be a signature in
$\mcal{C}(X)$, and let
$T_{Y}=(T_{1},\mcal{T}_{2},T_{3},T_{4})$ be a member of $\mcal{C}(Y)$.
Note that $S_{1}, T_{4}\subseteq B\cap U$ and $S_{4}, T_{1}\subseteq B\cap V$,
while $\mcal{S}_{2}$ is a family of subsets of $A\cap V$ and $\mcal{T}_{2}$
is a family of subsets of $A\cap U$.
We also have $S_{3}\subseteq A\cap U$ and $T_{3}\subseteq A\cap V$.
We declare $S_{X}$ and $T_{Y}$ to be \emph{compatible} if the
following conditions hold:
\begin{enumerate}[label=\textup{(\roman*)}]
\item $S_{1}\cap T_{4}=\emptyset$,
\item $T_{3}\in \mcal{S}_{2}$,
\item $S_{3}\in \mcal{T}_{2}$, and
\item $S_{4}\cap T_{1}=\emptyset$.
\end{enumerate}

We will prove that $X\cup Y$ is independent in $M$ if and only if
we can find signatures in $\mcal{C}(X)$ and $\mcal{C}(Y)$ that are
compatible.
This task is completed in \Cref{utopia}, and its converse (\Cref{dialer}).
From this it will easily follow that $\approx_{U}$ refines $\sim_{U}$, and
that therefore the class of fundamental transversal matroids is
efficiently pigeonhole.

\begin{claim}
\label{utopia}
Let $X\subseteq U$ and $Y\subseteq V$ be independent subsets of
$M$.
If $X\cup Y$ is independent in $M$ then there are
signatures $S_{X}\in\mcal{C}(X)$ and $T_{Y}\in\mcal{C}(Y)$ such that
$S_{X}$ and $T_{Y}$ are compatible.
\end{claim}

\begin{proof}
Let $W$ be a matching certifying that $X\cup Y$ is independent.
Thus $|W| = |X\cup Y|$, and every edge of $W$ joins a vertex in $(X\cup Y)\cap A$
to a vertex in $B-(X\cup Y)$.
Let $W_{X}$ and $W_{Y}$ be the subsets of $W$ consisting of
edges incident with vertices in $X$ (respectively $Y$).
Then $W_{X}$ certifies the independence of $X$, and $W_{Y}$
certifies the independence of $Y$.
We assert that the signatures $\mcal{C}(X,W_{X})$ and
$\mcal{C}(Y,W_{Y})$ are compatible.

Let $\mcal{C}(X,W_{X})$ be $(S_{1},\mcal{S}_{2},S_{3},S_{4})$
and let $\mcal{C}(Y,W_{Y})$ be $(T_{1},\mcal{T}_{2},T_{3},T_{4})$.
Then $S_{1}$ is the set of vertices in $B\cap U\cap S$ that are
either in $X$ or incident with an edge of $W_{X}$.
On other hand, $T_{4}$ is the set of vertices in $B\cap U\cap S$
incident with an edge of $W_{Y}$.
No edge in $W_{Y}$ is incident with an edge in $W_{X}$, or with a
vertex in $B\cap X$, so it is clear that $S_{1}$ and $T_{4}$ are disjoint.
Similarly, $S_{4}$ is the set of vertices in $B\cap V\cap S$ that are incident
with an edge of $W_{X}$ and $T_{1}$ is the set of vertices in $B\cap V\cap S$
that are either in $Y$, or incident with a vertex in $W_{Y}$.
This implies that $S_{4}\cap T_{1}=\emptyset$.

Note that $T_{3}$ is the set of vertices in $A\cap V\cap S$ that are
joined by an edge of $W_{Y}$ to a vertex in $(B\cap U)-S$.
Let $W'$ be the union of $W_{X}$ along with the set of edges in
$W_{Y}$ that are incident with a vertex in $T_{3}$.
Clearly $W'$ is a matching as it is a subset of $W$.
Also, $W_{X}\subseteq W'$ and $|W'-W_{X}|=|T_{3}|$.
Each edge in $W'-W_{X}$ is incident with a vertex in $T_{3}$,
and with a vertex in $(B\cap U)-S$.
Furthermore, no such edge is incident with a vertex in $X$,
since edges of $W$ join vertices in $(X\cup Y)\cap A$ to vertices
in $B-(X\cup Y)$.
Therefore each edge in $W'-W_{X}$ joins a vertex of $T_{3}$ to one
in $(B\cap U)-(S\cup X)$.
We have established that $T_{3}$ is contained in $\mcal{S}_{2}$.
The symmetrical argument shows that $S_{3}$ is in $\mcal{T}_{2}$.
Therefore $\mcal{C}(X,W_{X})$ and $\mcal{C}(Y,W_{Y})$ are compatible,
as we claimed.
\end{proof}

\begin{claim}
\label{dialer}
Let $X\subseteq U$ and $Y\subseteq V$ be independent subsets of
$M$.
If there are signatures $S_{X}\in\mcal{C}(X)$ and $T_{Y}\in\mcal{C}(Y)$
such that $S_{X}$ and $T_{Y}$ are compatible, then
$X\cup Y$ is independent in $M$.
\end{claim}

\begin{proof}
We assume that
$\mcal{C}(X,W_{X})=(S_{1},\mcal{S}_{2},S_{3},S_{4})$
and
$\mcal{C}(Y,W_{Y})=(T_{1},\mcal{T}_{2},T_{3},T_{4})$
are compatible signatures.
Recall that $S_{1}$ and $T_{4}$ are subsets of $B\cap U$ and
that $S_{4}$ and $T_{1}$ are subsets of $B\cap V$.
Furthermore $T_{3}$ is a subset of $A\cap V$ and $\mcal{S}_{2}$ is a
family of subsets of $A\cap V$.
Finally, $S_{3}$ is a subset of $A\cap U$ and $\mcal{T}_{2}$ is a family of subsets
of $A\cap U$.
We will construct a matching that certifies the independence
of $X\cup Y$.

Recall that $S_{3}$ is the subset of $A\cap U\cap S$ containing
vertices that are joined by edges of $W_{X}$ to vertices in
$(B\cap V)-S$.
Let $W_{X}''$ be the subset of $W_{X}$ containing edges
that are incident with vertices in $S_{3}$.
The compatibility of the signatures means that $S_{3}$ is in $\mcal{T}_{2}$.
Hence there is a matching, $W_{Y}'$, such that $W_{Y}\subseteq W_{Y}'$,
$|W_{Y}'-W_{Y}|=|S_{3}|$, and each edge of $W_{Y}'-W_{Y}$
joins a vertex in $S_{3}$ to one in $(B\cap V)-(S\cup Y)$.

Similarly, we let $W_{Y}''$ be the subset of $W_{Y}$ containing
edges that are incident with vertices in $T_{3}$.
Thus each edge in $W_{Y}''$ joins a vertex in $T_{3}$ to a vertex
in $(B\cap U)-S$.
As $T_{3}$ is in $\mcal{S}_{2}$, we can let $W_{X}'$ be a matching
such that $W_{X}\subseteq W_{X}'$,
$|W_{X}'-W_{X}|=|T_{3}|$, and each edge of $W_{X}'-W_{X}$
joins a vertex in $T_{3}$ to a vertex in $(B\cap U)-(S\cup X)$.
We now make the definition
\[
W=(W_{X}'-W_{X}'')\cup (W_{Y}'-W_{Y}'').
\]
We will prove that $W$ is a matching certifying the independence of
$X\cup Y$.

\begin{subclaim}
\label{noodle}
$W$ is a matching.
\end{subclaim}

\begin{proof}
Note that $W_{X}'-W_{X}''$ and $W_{Y}'-W_{Y}''$ are certainly matchings.
So if $W$ is not matching then there is a vertex $w$, and distinct
edges $wx\in W_{X}'-W_{X}''$ and $wy\in W_{Y}'-W_{Y}''$.

We first assume that $w$ is in $A$.
Assume also that $w$ is in $U$.
Because $Y$ is a subset of $V$, no edge of $W_{Y}$ is incident with a
vertex in $A\cap U$.
Therefore $wy$ is in $W_{Y}'-W_{Y}$.
This means that $wy$ joins a vertex of $S_{3}$
to a vertex in $(B\cap V)-(S\cup Y)$.
In particular this means that $w$ is in $S_{3}$.
No edge in $W_{X}'-W_{X}$ is incident with a vertex in $A\cap U$,
so $wx$ is not in $W_{X}'-W_{X}$.
Therefore it is in $W_{X}$, so
$wx$ is an edge of $W_{X}$ that is incident with a vertex in
$S_{3}$.
But this means that $wx$ is in $W_{X}''$, contradicting $wx\in W_{X}'-W_{X}''$.
If $w$ is in $V$, then we reach the similar
contradiction that $wy$ is in $W_{Y}''$.
Therefore we must now assume that $w$ is in $B$.

We assume that $w$ is in $B\cap U$.
No edge of $W_{Y}'-W_{Y}$  is incident with a vertex in $B\cap U$.
Therefore $wy$ is in $W_{Y}$, and it follows that $y$ belongs to $A\cap V$.

If $w$ is not in $S$, then $y$ is in $S$, for otherwise $wy$ is an edge
of $H$ that is not incident with the vertex cover $S$.
But in this case, $wy$ joins an edge of $A\cap V\cap S$ to a vertex in
$(B\cap U)-S$.
This implies that $y$ is in $T_{3}$.
Now $wy$ belongs to $W_{Y}''$, and we have a contradiction to $wy\in W_{Y}'-W_{Y}''$.
Thus $w$ is in $S$.

If $wx$ is in $W_{X}'-W_{X}$, then $wx$ joins a vertex in
$T_{3}$ to a vertex in $(B\cap U)-(S\cup X)$.
This is impossible, as we have already confirmed that $w$ is in $S$.
Hence $wx$ is in $W_{X}$.
Now $w$ is in $S$ and is incident with an edge of $W_{X}$, meaning that it is in $S_{1}$.
Furthermore, the edge $wy$ is in $W_{Y}$, and this means that $w$ is in $T_{4}$.
Thus $S_{1}\cap T_{4}\ne \emptyset$, and we have a contradiction
to the fact that
$\mcal{C}(X,W_{X})$ and $\mcal{C}(Y,W_{Y})$ are compatible.

If $w$ is in $V$, then we reach the symmetric contradiction that
either $wx$ is in $W_{X}''$, or $w$ is in $S_{4}\cap T_{1}$.
This completes the proof that $W$ is a matching.
\end{proof}

Now we must demonstrate that the matching $W$ certifies the independence
of $X\cup Y$.
First we show that every edge of $W$ joins a vertex in $(X\cup Y)\cap A$
to a vertex in $B-(X\cup Y)$.
To this end, we let $ab$ be an edge of $W$, where $a$ is in $A$ and $b$ is in $B$.
Without loss of generality we can assume that $ab$ is in $W_{X}'-W_{X}''$.
Then $a$ is either in $X$ or in $T_{3}$, which is a subset of $Y$.
Therefore $a$ is in $(X\cup Y)\cap A$.

We demonstrate that $b$ is not in $X\cup Y$.
If $ab$ is in $W_{X}'-W_{X}$, then $ab$ joins a vertex in $T_{3}$
to a vertex in $(B\cap U)-(S\cup X)$.
In this case, $b$ is certainly not in $X$.
Since $b$ is in $B\cap U$, and $Y\subseteq V$, it follows that $b$ is
also not in $Y$.
Therefore we will now assume that $ab$ is not in $W_{X}'-W_{X}$, so it is in $W_{X}$.
Each edge of $W_{X}$ joins a vertex of
$A\cap X$ to a vertex of $B-X$, so $b$ is not in $X$.
Assume that $b$ is in $Y$, so that it belongs to $B\cap V$.
Since $X\subseteq U$, it follows that $a$ is in $A\cap U$.
If $b$ is not in $S$, then $a$ is in $S$, for otherwise $ab$
is an edge of $H$ that is not incident with the vertex cover $S$.
In this case $ab$ is an edge of $W_{X}$ joining a vertex in
$A\cap U\cap S$ to a vertex in $(B\cap V)-S$, so $a$ is in
$S_{3}$, and $ab$ is in $W_{X}''$, a contradiction.
Therefore $b$ is in $S$.
As $b$ is in $B\cap Y\cap S$, it follows that it is in $T_{1}$.
But the edge $ab$ also certifies that $b$ is in $S_{4}$.
Therefore $S_{4}\cap T_{1}\ne \emptyset$, and we have
a contradiction to the fact that
$\mcal{C}(X,W_{X})$ and $\mcal{C}(Y,W_{Y})$ are compatible.
We have shown that $b$ is not in $X\cup Y$.
Hence every edge of $W$ joins a vertex of
$A\cap (X\cup Y)$ to a vertex in $B-(X\cup Y)$.

In the final step we must show that every vertex of $(X\cup Y)\cap A$ is incident
with an edge in $W$.
Let $a$ be a vertex in $(X\cup Y)\cap A$.
Without loss of generality, we will assume that $a$ is in $A\cap X$.
Then $a$ is certainly incident with an edge of $W_{X}$.
If it is not incident with an edge of $W$, then it is not incident with
an edge in $W_{X}'-W_{X}''$.
It follows that in this case, $a$ is incident with an edge of $W_{X}''$.
This implies $a$ is in $S_{3}$.
But in this case $w$ is incident with an edge of
$W_{Y}'-W_{Y}$, and hence with an edge of $W$.
Thus any vertex of $A\cap (X\cup Y)$
is incident with an edge in $W$, so $W$ certifies the
independence of $X\cup Y$, exactly as we desired.
\end{proof}

Let $X$ and $X'$ be two independent subsets of $U$ such that
$X\approx_{U} X'$.
Then $\mcal{C}(X)=\mcal{C}(X')$.
Let $Y\subseteq V$ be an independent set such that
$X\cup Y$ is independent.
\Cref{utopia} shows that there are compatible signatures
$S_{X}\in\mcal{C}(X)$ and $T_{Y}\in\mcal{C}(Y)$.
As $S_{X}$ is also in $\mcal{C}(X')$ it follows from \Cref{dialer} that
$X'\cup Y$ is independent.
This implies that $X\sim_{U}X'$, so $\approx_{U}$ refines $\sim_{U}$,
as desired.
Now the proof of \Cref{yuppie} is complete.
\end{proof}

Case (i) in \Cref{hammer} follows immediately from \Cref{yuppie} and
\Cref{lounge-II}.

\begin{remark}
\label{lubber}
\Cref{yuppie} shows that although a class of matroids may be
strongly pigeonhole, its minor-closure may not even be pigeonhole.
We can deduce this from \Cref{yellow} because the smallest
minor-closed class containing the fundamental transversal
matroids is the class of gammoids.
\end{remark}

\section{Lattice path matroids}

The class of \emph{lattice path matroids} was introduced by
Bonin, de Mier, and Noy \cite{BdMN03}.
It is closed under duality and minors \cite[Theorems 3.1 and 3.5]{BdMN03}.
Every lattice path matroid is transversal.
We describe an algorithm that constructs a parse tree for a
given lattice path matroid (\Cref{copeck}).
When combined with \Cref{nature} this shows that there is a
fixed-parameter tractable algorithm for testing \cmso\ sentences in
lattice path matroids.
It also shows that the class is pigeonhole.

A lattice path matroid is represented by a pair of strings
made from the alphabet $\{E,N\}$.
Let $P$ be such a string, so that $P = p_{1}p_{2}\cdots p_{s}$,
where each $p_{i}$ is equal to either $E$ or $N$.
When $i$ is in $\{1,\ldots, s\}$, we let $n_{i}(P)$ stand for the number of $N$\dash characters
in $\{p_{1},\ldots, p_{i}\}$.
We also let $N(P)$ stand for $\{i\in \{1,\ldots, s\}\colon p_{i}=N\}$.
Thus $n_{s}(P) = |N(P)|$.
If $P$ and $Q$ are strings of $s$ characters from the alphabet $\{E,N\}$ then we write $P\preccurlyeq Q$ to mean that $n_{i}(P) \leq n_{i}(Q)$ for each $i\in\{1,\ldots, s\}$.

Now assume that $P$ and $Q$ each contain $r$ copies of $N$ and $m$ copies of $E$.
Any such string can be identified with a path in the integer lattice
from $(0,0)$ to $(m,r)$ that uses only North and East steps.
If $P\preccurlyeq Q$ then the path $P$ never goes above $Q$.
In this case, an \emph{intermediate string}, $L$, is composed of $r$ copies of $N$
and $m$ copies of $E$ and satisfies $P\preccurlyeq L\preccurlyeq Q$.
Note that $P$ and $Q$ are both intermediate strings.

Let $P$ and $Q$ be strings composed of $r$ copies of $N$ and $m$
copies of $E$ such that $P\preccurlyeq Q$.
The \emph{lattice path matroid} $M[P,Q]$ has $\{1,\ldots, m+r\}$ as its
ground set.
The family of bases of $M[P,Q]$ is $\{N(L)\colon L\ \text{is an intermediate string}\}$.

As we have mentioned, every minor of a lattice path matroid is a lattice path matroid.
We now give an explicit description of such minors, following \cite[p.~707]{BdM06}.
Imagine that $M=M[P,Q]$ is a lattice path matroid, where $P$ and $Q$
contain $r$ copies of $N$ and $m$ copies of $E$.
Let $P$ be $p_{1}p_{2}\cdots p_{m+r}$ and let $Q$ be
$q_{1}q_{2}\cdots q_{m+r}$.
Let $i$ be in $\{1,\ldots, m+r\}$.
Assume that $n_{i}(P) = n_{i}(Q)$.
Then $i$ is a coloop of $M$ if $p_{i}=q_{i}=N$ and $i$ is a loop if
$p_{i}=q_{i}=E$.
In either of these cases, both $M\ba i$ and $M/i$ are equal to
\[M[p_{1}p_{2}\cdots p_{i-1}p_{i+1}\cdots p_{m+r}, q_{1}q_{2}\cdots q_{i-1}q_{i+1}\cdots q_{m+r}]\]
after we relabel the elements $i+1, i+2,\ldots, m+r$ in $M\ba i$ or $M/i$ as $i,i+1,\ldots, m+r-1$.

Now we assume that neither of these scenarios applies.
Let $j$ be the largest integer in $\{1,\ldots, i\}$ such that $p_{j}=E$, and let $k$ be the
least integer in $\{i,\ldots, m+r\}$ such that $q_{k}=E$.
Then 
\[M\ba i = M[p_{1}p_{2}\cdots p_{j-1}p_{j+1}\cdots p_{m+r}, q_{1}q_{2}\cdots q_{k-1}q_{k+1}\cdots q_{m+r}],\]
where we apply exactly the same relabelling as before.
Next let $j$ be the least integer in $\{i,\ldots, m+r\}$ such that $p_{j}=N$, and let $k$ be the
largest integer in $\{1,\ldots, i\}$ such that $q_{k}=N$.
In this case
\[M/ i = M[p_{1}p_{2}\cdots p_{j-1}p_{j+1}\cdots p_{m+r}, q_{1}q_{2}\cdots q_{k-1}q_{k+1}\cdots q_{m+r}].\]

\begin{proposition}
\label{throat}
Let $M=M[P,Q]$ be a lattice path matroid, where $P$ and $Q$ contain $r$ copies of $N$
and $m$ copies of $E$.
If $n_{i}(Q)-n_{i}(P) \geq t$ for some $i\in \{1,\ldots, m+r\}$, then $M$ contains a
minor isomorphic to $U_{t,2t}$.
\end{proposition}

\begin{proof}
Let $P$ be $p_{1}\cdots p_{m+r}$ and let $Q$ be $q_{1}\cdots q_{m+r}$.
Assume that the result fails, so that $M$ has no $U_{t,2t}$\dash minor.
Furthermore assume that $M$ has been chosen subject to this failure
so that $m+r$ is as small as possible.

If $p_{1}=N$, then $q_{1}=N$ because otherwise $P\preccurlyeq Q$ fails.
In this case $M/1$ is isomorphic to $M[p_{2}\cdots p_{m+r},q_{2}\cdots q_{m+r}]$.
But now $q_{2},\ldots, q_{i}$ contains at least $t$ more copies of $N$ than
$p_{2},\ldots, p_{i}$.
Since $M/1$ has no $U_{t,2t}$\dash minor, the minimality of $M$ has been
contradicted.
Therefore $p_{1}=E$.
Symmetrically, if $q_{m+r}=N$, then $p_{m+r}=N$ or else
$n_{m+r-1}(P)>n_{m+r-1}(Q)$.
In this case $M/(m+r)$ is isomorphic to $M[p_{1}\cdots p_{m+r-1},q_{1}\cdots q_{m+r-1}]$.
But if $i < m+r$ then the first $i$ characters of $p_{1}\cdots p_{m+r-1}$ are the
same as the first $i$ characters of $p_{1}\cdots p_{m+r}$, and the same applies to
$q_{1}\cdots q_{m+r-1}$.
If $i=m+r$, then $q_{1}\cdots q_{m+r-1}$ contains at least $t$ more
$N$\dash characters than $p_{1}\cdots p_{m+r-1}$.
In either case we see that $M/(m+r)$ is a smaller counterexample, so
$q_{m+r}=E$.

Let $k\in\{1,\ldots, m+r\}$ be the least integer such that $q_{k}=E$, and let
$Q'=q_{1}\cdots q_{k-1}q_{k+1}\cdots q_{m+r}$.
Let $P'$ be $p_{2}\cdots p_{m+r	}$.
Thus $M\ba 1$ is isomorphic to $M[P',Q']$.
Now $n_{i-1}(P') = n_{i}(P)$.
If $k \leq i$, then $n_{i-1}(Q')=n_{i}(Q)$, and in this case
$n_{i-1}(Q')-n_{i-1}(P') \geq t$, so $M\ba 1$ provides a smaller counterexample.
Therefore $k > i$, which means that $q_{1},\ldots, q_{i}$ are all equal to $N$.

Assume there is at least one $N$\dash character in $p_{1},\ldots, p_{i}$.
Let $j\in\{1,\ldots, i\}$ be the least integer such that $p_{j}=N$.
Let $Q'' = q_{2}\cdots q_{m+r}$ and let $P''$ be
$p_{1}\cdots p_{j-1}p_{j+1}\cdots p_{m+r}$.
Thus $M/1$ is isomorphic to $M[P'',Q'']$.
In this case
\[
n_{i-1}(Q'') - n_{i-1}(P'') = (i-1)-(n_{i}(P)-1) = i-n_{i}(P) = n_{i}(Q)-n_{i}(P) \geq t
\]
so $M/1$ is a smaller counterexample.
Hence $p_{1},\ldots, p_{i}$ are all equal to $E$.

Note that $i \geq t$.
Assume that $i > t$.
Then
\[
n_{i-1}(Q') - n_{i-1}(P') = (i-1)-0\geq t,
\]
which implies that $M\ba 1$ is a smaller counterexample.
We conclude that $i=t$.

Assume that there is an integer $j\in \{i+1,\ldots, m+r\}$ such that
$p_{j}=E$, and let $j$ be the largest such integer.
We have noted that $q_{m+r}=E$.
Now we define $P'$ to be $p_{1}\cdots p_{j-1}p_{j+1}\cdots p_{m+r}$ and
$Q'$ to be $q_{1}\cdots q_{m+r-1}$.
Thus $M\ba (m+r)$ is isomorphic to $M[P',Q']$.
But $n_{i}(Q') = n_{i}(Q)=t$ and  $n_{i}(P') = n_{i}(P)=0$, so we have a
smaller counterexample.
Therefore $p_{i+1},\ldots, p_{m+r}$ are all equal to $N$.

Note that $m+r \geq 2t$.
Assume that $m+r>2t$.
Since $p_{1},\ldots, p_{i}$ are $E$\dash characters and
$p_{i+1},\ldots, p_{m+r}$ are $N$\dash characters, and $i=t$, this
means that $P$ and $Q$ contain more than $t$ copies of $N$.
Let $k$ be the largest integer such that $q_{k}=N$.
Since $Q$ contains more than $i=t$ copies of $N$, we see that $k > i$.
Define $Q''$ to be $q_{1}\cdots q_{k-1}q_{k+1}\cdots q_{m+r}$ and define
$P''$ to be $p_{1}\cdots p_{m+r-1}$.
Then $M/(m+r)$ is isomorphic to $M[P'',Q'']$.
Again we see that $n_{i}(Q'')=n_{i}(Q)=t$ and $n_{i}(P'') = n_{i}(P)=0$.
Now we have a smaller counterexample, so $m+r=2t$.

We have shown that $P$ consists of $t$ copies of $E$ followed by $t$ copies of $N$.
The first $t$ characters in $Q$ are $N$, so the following $t$ characters must
be $E$.
It is now very easy to see that $M$ is isomorphic to $U_{t,2t}$, so we have a final
contradiction.
\end{proof}

\begin{corollary}
\label{jargon}
Let $M=M[P,Q]$ be a lattice path matroid, where $P$ and $Q$ contain $r$ copies of $N$
and $m$ copies of $E$.
Assume that $\bw(M) \leq \lambda$.
Then $n_{i}(Q)-n_{i}(P) \leq \frac{1}{2}(3\lambda - 3)$ for each
$i\in\{1,\ldots, m+r\}$.
\end{corollary}

\begin{proof}
Let $t$ be equal to $n_{i}(Q)-n_{i}(P)$ for an arbitrary choice of
$i$ in $\{1,\ldots, m+r\}$.
Then \Cref{throat} implies that $M$ has an $U_{t,2t}$\dash minor.
The branch-width of this minor is
$\lceil 2t/3\rceil+1$ by \cite[Exercise 14.2.5]{Oxl11}.
Now \cite[Proposition 14.2.3]{Oxl11} implies that
$\lceil 2t/3 \rceil + 1 \leq \bw(M) \leq \lambda$.
The result follows.
\end{proof}

In the next result, we aim to apply \Cref{nature} to the class
of lattice path matroids.
Lattice path matroids are represented succinctly by pairs of strings.

\begin{theorem}
\label{copeck}
For each positive integer $\lambda$ there is a tree automaton $A_{\lambda}$ that
can be constructed in time bounded by $f(\lambda)$, for some function $f$ on the positive integers, so that when we are given $(P,Q,\lambda)$, where $M=M[P,Q]$ is a lattice path matroid with branch-width at most $\lambda$, we can construct a parse tree for $M$ relative to $A_{\lambda}$ in time bounded by $f(\lambda)|E(M)|^{c}$, where $c$ is a constant.
\end{theorem}

\begin{proof}
Let $p$ be $\lfloor \tfrac{1}{2}(3\lambda-3)\rfloor$.
Let $\mathcal{P}$ be the power set of $\{1,2,\ldots,p\}$.
Let $\Sigma$ be the set that contains the empty function $\epsilon$,
as well as all functions from $\mathcal{P}\times \{0,1\}$ or from $\{0,1\}\times\{0,1\}$ into $\mathcal{P}$.
Then $A_{\lambda}$ has $\Sigma\cup \{(\epsilon,0),(\epsilon,1)\}$ as its alphabet.
The state space of $A_{\lambda}$ is $\mathcal{P}$, and all states are accepting except for the empty set.

Assume that $P$ and $Q$
contain $r$ copies of $N$ and $m$ copies of $E$.
Thus the ground set of $M$ is $\{1,\ldots, m+r\}$.
Let $P=p_{1}p_{2}\cdots p_{m+r}$ and let $Q=q_{1}q_{2}\cdots q_{m+r}$.
Assume that $\bw(M)\leq \lambda$.
We construct the tree $T_{M}$ as shown in \Cref{fig9}.
The bijection $\varphi_{M}$ takes the element $i \in\{1,\ldots, m+r\}$
to the leaf $v_{i}$.

\begin{figure}[htb]
\centering
\includegraphics[scale=1.1]{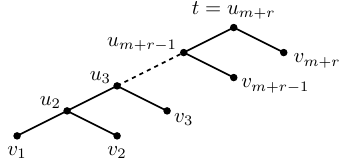}
\caption{The parse tree for a lattice path matroid.}
\label{fig9}
\end{figure}

We imagine that $Y$ is a subset of $\{1,2,\ldots,m+r\}$.
The idea behind the operation of $A_{\lambda}$ is that it will apply a member of $\mathcal{P}$ to each of the internal nodes of $T_{M}$.
The set applied to $u_{i}$ will contain all possible values $|N(L_{i})|-|N(p_{1}p_{2}\cdots p_{i})|$, where $L_{i}$ ranges over all strings of $i$ characters satisfying $p_{1}p_{2}\cdots p_{i}\preccurlyeq L_{i}\preccurlyeq q_{1}q_{2}\cdots q_{i}$ and $Y\cap \{1,2,\ldots, i\}\subseteq N(L_{i})$.
Note that \Cref{jargon} implies that such a value can be at most $p$ and that $Y$ is independent in $M$ if and only if the set applied to the root $t$ is non-empty.

To implement this idea, we describe $\sigma_{M}$, which applies a function to each node of $T_{M}$.
We say that $\sigma_{M}$ applies the empty function $\epsilon$ to any leaf.
Next we let $g\in \Sigma$ be the function applied by $\sigma_{M}$ to $u_{2}$.
The domain of $g$ is $\{0,1\}\times \{0,1\}$.
The output of $g$ depends on the characters in $p_{1}p_{2}$ and $q_{1}q_{2}$.
In the following table, the columns are labelled by the array
\[\begin{matrix}q_{1}q_{2}\\p_{1}p_{2}\end{matrix}\]
and each row shows the output of $g$ on the members of $\{0,1\}\times\{0,1\}$.

\begin{center}
\begin{tabular}{|c||m{15mm}|m{15mm}|m{15mm}|m{15mm}|}\hline
&\hfil $(0,0)$&\hfil $(0,1)$&\hfil $(1,0)$&\hfil $(1,1)$\\ \hhline{|=||=|=|=|=|}
EE&\hfil\multirow{2}{*}{$\{0\}$}&\hfil\multirow{2}{*}{$\emptyset$}&\hfil\multirow{2}{*}{$\emptyset$}&\hfil\multirow{2}{*}{$\emptyset$}\\
EE&&&&\\\hline
EN&\hfil\multirow{2}{*}{$\{0,1\}$}&\hfil\multirow{2}{*}{$\{1\}$}&\hfil\multirow{2}{*}{$\emptyset$}&\hfil\multirow{2}{*}{$\emptyset$}\\
EE&&&&\\\hline
NE&\hfil\multirow{2}{*}{$\{0,1\}$}&\hfil\multirow{2}{*}{$\{1\}$}&\hfil\multirow{2}{*}{$\{1\}$}&\hfil\multirow{2}{*}{$\emptyset$}\\
EE&&&&\\\hline
NN&\hfil\multirow{2}{*}{$\{0,1,2\}$}&\hfil\multirow{2}{*}{$\{1,2\}$}&\hfil\multirow{2}{*}{$\{1,2\}$}&\hfil\multirow{2}{*}{$\{2\}$}\\
EE&&&&\\\hline
EN&\hfil\multirow{2}{*}{$\{0\}$}&\hfil\multirow{2}{*}{$\{0\}$}&\hfil\multirow{2}{*}{$\emptyset$}&\hfil\multirow{2}{*}{$\emptyset$}\\
EN&&&&\\\hline
NE&\hfil\multirow{2}{*}{$\{0\}$}&\hfil\multirow{2}{*}{$\{0\}$}&\hfil\multirow{2}{*}{$\{0\}$}&\hfil\multirow{2}{*}{$\emptyset$}\\
EN&&&&\\\hline
NN&\hfil\multirow{2}{*}{$\{0,1\}$}&\hfil\multirow{2}{*}{$\{0,1\}$}&\hfil\multirow{2}{*}{$\{0,1\}$}&\hfil\multirow{2}{*}{$\{1\}$}\\
EN&&&&\\\hline
NE&\hfil\multirow{2}{*}{$\{0\}$}&\hfil\multirow{2}{*}{$\emptyset$}&\hfil\multirow{2}{*}{$\{0\}$}&\hfil\multirow{2}{*}{$\emptyset$}\\
NE&&&&\\\hline
NN&\hfil\multirow{2}{*}{$\{0,1\}$}&\hfil\multirow{2}{*}{$\{1\}$}&\hfil\multirow{2}{*}{$\{0,1\}$}&\hfil\multirow{2}{*}{$\{1\}$}\\
NE&&&&\\\hline
NN&\hfil\multirow{2}{*}{$\{0\}$}&\hfil\multirow{2}{*}{$\{0\}$}&\hfil\multirow{2}{*}{$\{0\}$}&\hfil\multirow{2}{*}{$\{0\}$}\\
NN&&&&\\\hline
\end{tabular}
\end{center}

Next we let $g$ be the function applied to $u_{i}$ by $\sigma_{M}$, where $i$ is in
\[\{3,\ldots, m+r\}.\]
The domain of $g$ will be $\mathcal{P}\times \{0,1\}$ and the output of $g$ will be in $\mathcal{P}$.
Assume that the input to $g$ is $(A,s)$.
First consider the case that $p_{i}=N$.
Then
\[g(A,0)=(A \cup \{a-1\colon a\in A\})\cap \{0,1,\ldots, n_{i}(Q)-n_{i}(P)\}\]
and
\[g(A,1)=A\cap \{0,1,\ldots, n_{i}(Q)-n_{i}(P)\}.\]
On the other hand, if $p_{i}=E$ then
\[g(A,0)=(A\cup \{a+1\colon a\in A\})\cap \{0,1,\ldots, n_{i}(Q)-n_{i}(P)\}\]
and
\[g(A,1)=\{a+1\colon a\in A\}\cap \{0,1,\ldots, n_{i}(Q)-n_{i}(P)\}.\]

Next we must describe the transition functions $\delta_{0}$ and $\delta_{2}$.
The former takes each pair $(\epsilon,s)$ to $\{s\}$, where $s$ is $0$ or $1$.
If $g$ is a function from $\{0,1\}\times \{0,1\}$ or $\mathcal{P}\times\{0,1\}$ to $\mathcal{P}$
then $\delta_{2}(g,j,k)$ is $g(j,k)$ whenever $(j,k)$ is in the domain of $g$.
This completes the description of the automaton \[A_{\lambda}=(\Sigma\cup \{(\epsilon,0),(\epsilon,1)\}, \mathcal{P}, \mathcal{P}-\{\emptyset\},\delta_{0},\delta_{2}).\]
Clearly $A_{\lambda}$ can be constructed in time bounded by $f(\lambda)$, where $f$ is
some function from the positive integers to the positive integers.

To complete the proof of \Cref{copeck}, we must show that $(T_{M},\sigma_{M},\varphi_{M})$
is a parse tree relative to the automaton $A_{\lambda}$.
We can accomplish this by proving our claim that the set applied to $u_{i}$ by $A_{\lambda}$ during its run on $\enc(T_{M}, \sigma_{M}, \varphi_{M}, Y)$ is the set $\{|N(L_{i})|-|N(p_{1}p_{2}\cdots p_{i})|\}$, where $L_{i}$ ranges over $\{E,N\}^{i}$ subject to $p_{1}p_{2}\cdots p_{i}\preccurlyeq L_{i}\preccurlyeq q_{1}q_{2}\cdots q_{i}$ and $Y\cap \{1,2,\ldots, i\}\subseteq N(L_{i})$.
When $i=2$, this is simply a matter of checking the table above, so we inductively assume the claim is true for $i-1$.
Let $A$ be the set applied to $u_{i-1}$.
If $p_{i}=N$ and $i$ is in $Y$, then the final character in $L_{i}$ must be $N$, so the set applied to $u_{i}$ should be exactly the numbers in $A$, as long as those numbers are not greater than $n_{i}(Q)-n_{i}(P)$.
But this set is exactly $g(A,1)$, where $g$ is the function applied to $u_{i}$ by the labelling $\sigma_{M}$.
So in this case, the set applied to $u_{i}$ is exactly as we claimed.

If $i$ is not in $Y$, then the final character in $L_{i}$ could be either $E$ or $N$, as long as the constraint $p_{1}p_{2}\cdots p_{i}\preccurlyeq L_{i}\preccurlyeq q_{1}q_{2}\cdots q_{i}$ is satisfied.
Thus the set applied to $u_{i}$ should contain all numbers of the form $a-1$ or $a$, where $a$ ranges over the members of $A$, as long as these numbers are in $\{0,1,\ldots, n_{i}(Q)-n_{i}(P)\}$.
But this is exactly the output $g(A,0)$.
The case when $p_{i}=E$ yields to exactly the same sort of analysis, so $(T_{M},\sigma_{M},\varphi_{M})$ is a parse tree relative to $A_{\lambda}$.
It is clear that $(T_{M},\sigma_{M},\varphi_{M})$ can be constructed in time bounded by $f(\lambda)(m+r)^{c}$ for some constant $c$, so the proof is complete.
\end{proof}

By applying \Cref{nature} to the previous result, we see that case (ii)
in \Cref{hammer} is proved.
Furthermore, we can deduce the following result.

\begin{corollary}
\label{dinner}
The class of lattice path matroids is pigeonhole.
\end{corollary}

\section{Frame matroids}
\label{frames}

Let $G$ be a graph with edge set $E$.
We allow $G$ to contain loops and parallel edges.
If $X$ is a subset of $E$, we use $G[X]$ to denote the subgraph
with edge set $X$ containing exactly those vertices that
are incident with an edge in $X$.
Similarly, if $N$ is a set of vertices, then $G[N]$ is the induced subgraph
of $G$ with $N$ as its vertex set.
A \emph{theta subgraph} consists of two distinct vertices joined by
three internally-disjoint paths.
A \emph{linear class} of cycles in $G$ is a family, \mcal{B}, of cycles such
that no theta subgraph of $G$ contains exactly two cycles in \mcal{B}.
Let \mcal{B} be a linear class of cycles in $G$.
A cycle in \mcal{B} is \emph{balanced}, and a cycle not in \mcal{B}
is \emph{unbalanced}.
A subgraph of $G$ is \emph{unbalanced} if it contains an
unbalanced cycle, and is otherwise \emph{balanced}.

\emph{Frame matroids} were introduced by Zaslavsky \cite{Zas91}.
The frame matroid, $M(G,\mcal{B})$, has $E$ as its ground set.
The circuits of $M(G,\mcal{B})$ are the edge sets of balanced cycles,
and the edge sets of minimal connected subgraphs
containing at least two unbalanced cycles, and no balanced cycles.
Such a subgraph is either a theta subgraph or a \emph{handcuff}.
A \emph{tight handcuff} contains two edge-disjoint cycles that have
exactly one vertex in common.
A \emph{loose handcuff} consists of two vertex-disjoint cycles
and a minimal path that joins the two cycles.
Note that if \mcal{B} contains every cycle, then $M(G,\mcal{B})$ is a graphic
matroid.
The set $X\subseteq E$ is independent in $M(G,\mcal{B})$
if and only if $G[X]$ contains no balanced cycle, and each
connected component of $G[X]$ contains at most one cycle.
The rank of $X$ in $M(G,\mcal{B})$ is the number of vertices in
$G[X]$, minus the number of balanced components of $G[X]$.

\begin{proposition}
\label{rouble}
Let $M=M(G,\mcal{B})$ be a $3$\dash connected frame matroid,
and let $(U,V)$ be a partition of the edge set of $G$ such that
$\lambda_{M}(U)\leq \lambda$.
There are at most $14\lambda-12$ vertices that are incident
with edges in both $U$ and $V$.
\end{proposition}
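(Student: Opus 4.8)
The plan is to argue directly on the graph $G$, using the rank formula $r_M(X)=|V(G[X])|-b(G[X])$, where $b(G')$ denotes the number of balanced components of a subgraph $G'$. First I would clear away degenerate cases. If $|E(M)|\le 3$ then $M$, hence $G$ (after deleting isolated vertices), is small and the bound is immediate. Otherwise $M$ is simple and cosimple, so $G$ has no balanced loops, no balanced $2$\dash cycles, no pendant edges, and no vertex of degree two meeting two links with no unbalanced loop present; moreover $G$ is connected, since a frame matroid of a disconnected biased graph is a direct sum. Finally, if $\lambda_M(U)\le 1$ then $\{U,V\}$ is not a $2$\dash separation of $M$, so $|U|\le 1$ or $|V|\le 1$ and the set $W$ of vertices meeting both $U$ and $V$ has size at most $2$. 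So from now on assume $G$ is connected, $U,V\ne\emptyset$, and $\lambda:=\lambda_M(U)\ge 2$.

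Write $N=V(G)$, $A=V(G[U])\setminus W$, $B=V(G[V])\setminus W$, so $N=A\sqcup W\sqcup B$. The rank formula then gives
\[
\lambda_M(U)=r_M(U)+r_M(V)-r(M)=|W|-\bigl(b(G[U])+b(G[V])-b(G)\bigr),
\]
so $|W|=\lambda+b(G[U])+b(G[V])-b(G)\le \lambda+b(G[U])+b(G[V])$. Thus the whole problem reduces to controlling the number of balanced components of $G[U]$ and of $G[V]$.

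Next I would classify the balanced components. Let $C$ be a balanced component of $G[U]$; each vertex of $C$ outside $W$ meets only edges of $C$, so $S:=V(C)\cap W$ contains the interface of the partition $(E(C),E\setminus E(C))$. If $V(C)\ne N$ and $|S|\le 2$ then the displayed identity applied to $E(C)$, together with $b(G[E(C)])=1$ and $b(G)\le 1$, forces $\lambda_M(E(C))\le 2$, contradicting $3$\dash connectivity unless $E(C)$ or $E\setminus E(C)$ is tiny; in the tiny cases one checks directly that $|W|\le 6\le 14\lambda-12$. Hence each balanced component of $G[U]$ is all of $G$ on its vertices (impossible here, since $V\ne\emptyset$), or has at least three vertices in $W$, or satisfies $V(C)\subseteq W$ with $|V(C)|\le 2$ — and in this last case the absence of loops, $2$\dash cycles, pendants and series pairs forces $C$ to be a single link edge with both ends in $W$. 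Let $p_U$ (resp.\ $p_V$) be the number of such single-edge components of $G[U]$ (resp.\ $G[V]$). Since each vertex of $W$ lies in at most one balanced component of $G[U]$ and at most one of $G[V]$, counting incidences gives $3\bigl(b(G[U])+b(G[V])-p_U-p_V\bigr)+2(p_U+p_V)\le 2|W|$, so $b(G[U])+b(G[V])\le\tfrac13(2|W|+p_U+p_V)$, and substituting into the identity above yields $|W|\le 3\lambda+p_U+p_V$.

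The remaining, and hardest, step is to bound $p_U+p_V$ in terms of $\lambda$ alone; this is where the constants $14$ and $-12$ are produced. I would split each single-edge component $e=uv$ of $G[U]$ into \emph{internal} ones (with $u,v$ in a common component of $G[V]$) and \emph{bridging} ones. An internal edge lies in $\cl_M(V)$ while still being a coloop of $M|U$, so, using $\lambda_M(U)=r_M(U)-r_{M/V}(U)$ and peeling off such edges one at a time, the internal single-edge components of $G[U]$ number at most $\lambda$; symmetrically for $G[V]$, since $\lambda_M(V)=\lambda_M(U)$. The bridging edges are then controlled by a more delicate argument: contract each component of $G[V]$ (respectively $G[U]$) to a point, note that the bridging edges become links of the resulting multigraph, apply $3$\dash connectivity a second time, and keep careful track of which merges of balanced components can occur. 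Folding this count back into $|W|\le 3\lambda+p_U+p_V$ is expected to give $|W|\le 14\lambda-12$. I anticipate that this final accounting of bridging (and other small exceptional) components is the principal obstacle, and the source of the slack in the stated constant.
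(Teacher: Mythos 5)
Your opening framework parallels the paper closely: the rank formula $r_M(X)=|V(G[X])|-b(G[X])$, the identity $|W|=\lambda_M(U)+b(G[U])+b(G[V])-b(G)$, and the classification of balanced components into single-link ones (both ends in $W$) and larger ones (at least three vertices in $W$, by $3$\dash connectivity). Your counting step giving $|W|\le 3\lambda+p_U+p_V$ is correct and is essentially the paper's inequality \eqref{eqn1} packaged differently. But the part you explicitly flag as the ``principal obstacle'' -- bounding $p_U+p_V$ by a linear function of $\lambda$ alone -- is precisely the content of the proposition, and you have not done it. A proof proposal cannot rest on ``expected to give'' and ``keep careful track''; as written, this is a genuine gap, not a minor omission.

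There is also a substantive error in the one piece of the final step you do attempt. You assert that an ``internal'' single-edge component $e=uv$ of $G[U]$, meaning one with $u,v$ in a common component $D$ of $G[V]$, lies in $\cl_M(V)$. That fails when $D$ is balanced and the cycle formed by $e$ together with a $u$--$v$ path in $D$ is unbalanced: then $r_M(D\cup e)=|V(D)|>|V(D)|-1=r_M(D)$, so $e\notin\cl_M(V)$. The correct dichotomy to use is membership in $\cl_M(V)$ itself, not your graph-theoretic internal/bridging split. The paper's route is: the single-edge components of $G[U]$ form a matching and hence an independent set, and by submodularity $\cl_M(U)\cap\cl_M(V)$ has rank at most $\lambda$, so at most $\lambda$ of these edges lie in $\cl_M(V)$. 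Each remaining such edge cannot be incident with a vertex of a single-edge component of $G[V]$ (that would create a degree-two vertex and hence a series pair, contradicting $3$\dash connectivity), and cannot join two vertices lying in unbalanced components of $G[V]$ (that would place it inside a handcuff and hence in $\cl_M(V)$). So each of these at least $p_U-\lambda$ edges meets a vertex of a \emph{balanced multi-edge} component of $G[V]$, and since they are a matching these hit distinct vertices. That is the structural lemma your sketch is missing, and it is the lever that turns your $|W|\le 3\lambda+p_U+p_V$ into the stated $14\lambda-12$. Contracting components of $G[V]$ and re-invoking $3$\dash connectivity, as you propose, is not obviously legitimate (the contracted biased graph need not represent a $3$\dash connected frame matroid), and you give no reason to expect the bookkeeping to close.
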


\begin{proof}
Let $n$ be the number of vertices in $G$.
We can assume that $G$ has no isolated vertices.
It then follows from the $3$\dash connectivity of $M$ that $G$ is connected.
Let $n_{U}$ and $n_{V}$ be the number of vertices in $G[U]$ and
$G[V]$, respectively.
Let $N$ be the set of vertices that are in both $G[U]$ and $G[V]$, so
$n+|N|=n_{U}+n_{V}$.
Each vertex in $N$ is incident with a connected component
of $G[U]$ and with a connected component of $G[V]$.
Since $G$ is connected, each component of
$G[U]$ or $G[V]$ contains at least one vertex of $N$.
Thus the connected components of $G[U]$ induce a partition of $N$.
There are no coloops in $M$, and it follows that if a component of $G[U]$ contains
only a single, non-loop, edge, then that edge joins two vertices of $N$.
Let $a$ be the number of such components.
Next we claim that if $X$ is a connected component of $G[U]$
such that $X$ is balanced and contains at least two edges, then
$X$ contains at least three vertices of $N$.
If this is not true, then we can easily verify that
$M$ has a $1$\dash\ or $2$\dash separation,
contradicting the hypotheses of the \namecref{nibble}.
Assume that there are $b$ balanced components of $G[U]$
with more than one edge, and let $\alpha_{i},\ldots, \alpha_{b}$
be the numbers of vertices these components share with $N$.
Our claim shows that $\alpha_{i}\geq 3$ for each $i$.
Finally, assume there are $c$ unbalanced components in $G[U]$,
and these components intersect $N$ in $\beta_{1},\ldots, \beta_{c}$ vertices,
respectively.
Thus $|N|=2a+\sum \alpha_{i}+\sum \beta_{i}$, and
$r_{M}(U) = n_{U}-(a+b)$.

Let $x$ be the number of components of $G[V]$
consisting of a single non-loop edge.
Assume there are $y$ balanced components of $G[V]$
with more than one edge, and that these intersect $N$ in
$\gamma_{1},\ldots, \gamma_{y}$ vertices.
Let $z$ be the number of unbalanced components of
$G[V]$, and assume that they intersect $N$ in
$\delta_{1},\ldots, \delta_{z}$ vertices, respectively.
So we have $|\gamma_{i}|\geq 3$,
$|N|=2x+\sum \gamma_{i}+\sum \delta_{i}$, and
$r_{M}(V) = n_{V}-(x+y)$.
Because $G$ is connected, $r(M)\geq n-1$,
and $r(M)=n-1$ if and only if $G$ is balanced.
Now we observe that
\begin{linenomath*}
\begin{multline*}
\lambda\geq r_{M}(U)+r_{M}(V)-r(M)
\geq n_{U}+n_{V}-(a+b+x+y)-(n-1)\\
=|N|-(a+b+x+y)+1.
\end{multline*}
\end{linenomath*}
This last quantity is equal to
$a+\sum \alpha_{i} + \sum \beta_{i} -(b+x+y)+1$, and also to
$x+\sum \gamma_{i} +\sum \delta_{i} - (a+b+y)+1$,
so both are at most $\lambda$.
By adding the two inequalities together, we obtain
\[
2\lambda\geq \sum \alpha_{i}+\sum \beta_{i}+\sum \gamma_{i}+\sum \delta_{i}-2(b+y)+2.
\]
But because each $\alpha_{i}$ is at least three, we also have
$b\leq \frac{1}{3}\sum \alpha_{i}$, and symmetrically
$y\leq \frac{1}{3}\sum \gamma_{i}$.
Therefore
\begin{equation}
\label{eqn1}
6(\lambda-1) \geq \sum \alpha_{i}+3\sum \beta_{i}+\sum \gamma_{i}+3\sum \delta_{i}.
\end{equation}

The edges counted by $a$ form a matching.
Therefore they are an independent set in $M$.
As $r_{M}(U)+r_{M}(V)-r(M)\leq \lambda$,
submodularity tells us that the intersection of
$\cl_{M}(U)$ and $\cl_{M}(V)$ has rank at most $\lambda$.
Thus there are at least $a-\lambda$ components of $G[U]$ that
consist of a single, non-loop, edge that is not in $\cl_{M}(V)$.
No such edge can be incident with one of the components of
$G[V]$ counted by $x$, for this would mean that a vertex of $G$
has degree equal to two, implying that $M$ contains a series pair.
This is impossible, since $M$ is $3$\dash connected (and we can obviously assume that
it has more than three elements).
Nor can such an edge join two vertices counted by the variables
$\delta_{1},\ldots,\delta_{z}$, for then the edge joins two
components of $G[V]$ that contain unbalanced cycles.
This means that the edge is in a handcuff, and hence in
$\cl_{M}(V)$.
Now we conclude that each of the (at least) $a-\lambda$
edges is incident with at least one vertex counted
by the variables $\gamma_{1},\ldots, \gamma_{y}$.
As the edges counted by $a$ form a matching, we now see that
$a-\lambda \leq \sum \gamma_{i}$.
We conclude that
\begin{multline*}
|N|=2a+\sum \alpha_{i} +\sum \beta_{i}
\leq 2\sum \gamma_{i} + 2\lambda +\sum \alpha_{i}+\sum \beta_{i}\\
\leq 2\sum \alpha_{i}+6\sum\beta_{i} +2\sum\gamma_{i} +2\lambda.
\end{multline*}
But \eqref{eqn1} implies that
$2\sum \alpha_{i}+6\sum\beta_{i} +2\sum\gamma_{i}\leq 12\lambda -12$, and the result follows.
\end{proof}

\begin{remark}
\label{adagio}
If we remove the constraint of $3$\dash connectivity from
\Cref{rouble}, then no bound on the number of vertices in both
$G[U]$ and $G[V]$ is possible.
To see this, let $c_{0},\ldots, c_{2n-1}$ be vertices in a cycle
of the $2$\dash connected graph $G$.
Assume that $G-\{c_{i},c_{j}\}$ is disconnected for any $i\ne j$.
Define \mcal{B} to be the family of cycles that contain
all of the vertices $c_{0},\ldots, c_{2n-1}$.
It is easy to verify that \mcal{B} is a linear class.
For any $i$ (modulo $2n$) let $C_{i}$ be the set of
edges contained in a path from $c_{i}$ to $c_{i+1}$ containing
no other vertex in $c_{0},\ldots, c_{2n-1}$.
Then
\[(C_{0}\cup C_{2}\cup\cdots\cup C_{2n-2},
C_{1}\cup C_{3}\cup\cdots\cup C_{2n-1})\]
is a $1$\dash separation of $M(G,\mcal{B})$, but obviously there
is no bound on the number of vertices incident with edges in both
sides of this separation.
\end{remark}

We will concentrate on two subclasses of frame matroids.
\emph{Bicircular} matroids are those frame matroids
arising from linear classes that contain only loops.
Thus every cycle with more than one edge is unbalanced.
For any graph, $G$, we define $B(G)$ to be the
bicircular matroid $M(G,\emptyset)$.
Thus every bicircular matroid is equal to
$B(G)\oplus U_{0,t}$ for some graph $G$ and some integer $t$.
Bicircular matroids can also be characterised as the
transversal matroids represented by systems of the form
$(A_{1},\ldots, A_{r})$, where each element of the ground set is in
at most two of the sets $A_{1},\ldots, A_{r}$.

Next we define gain-graphic matroids.
Again, we let $G$ be an undirected graph with edge set $E$
and (possibly) loops and multiple edges.
Define $A(G)$ to be
\begin{linenomath}
\begin{multline*}
\{(e,u,v)\colon e\ \text{is a non-loop edge joining vertices}\ u\ \text{and}\ v\}\\
\cup \{(e,u,u)\colon e\ \text{is a loop incident with the vertex}\ u\}.
\end{multline*}
\end{linenomath}
A \emph{gain function}, $\sigma$, takes $A(G)$ to a group $H$
and satisfies $\sigma(e,u,v)=\sigma(e,v,u)^{-1}$
for any non-loop edge $e$ with end-vertices $u$ and $v$.
If $W=v_{0}e_{0}v_{1}e_{1}\cdots e_{t}v_{t+1}$ is a walk of $G$, then the
\emph{gain-value} of $W$ is
$\sigma(W)=\sigma(e_{0},v_{0},v_{1})\cdots\sigma(e_{t},v_{t},v_{t+1})$.
Now let $C=v_{0}e_{0}v_{1}e_{1}\cdots e_{t}v_{t+1}$ be a cycle of $G$,
where $v_{0}=v_{t+1}$, and the other vertices are pairwise distinct.
Note that $\sigma(C)$ may depend on the choice of orientation of $C$ and if $H$ is nonabelian, it may also depend on the choice of starting
vertex.
However, if $\sigma(C)$ is equal to the identity, then this equality will
hold no matter which starting vertex and orientation we choose.
We declare a cycle to be balanced exactly when $\sigma(C)$ is
equal to the identity, and this gives rise to a linear class.
If \mcal{B} is such a linear class, then $M(G,\mcal{B})$ is
an \emph{\hgg} matroid.
Gain-graphic matroids play an important role in the works by
Kahn and Kung \cite{KK82}, and Geelen, Gerards, and
Whittle \cite{GGW13}.

Let $u$ be a vertex of $G$, and let $\alpha$ be an element of $H$.
The gain function $\sigma_{u,\alpha}$ is defined to be identical to
$\sigma$ on any loop and on any edge not incident with $u$.
Furthermore $\sigma_{u,\alpha}(e,u,v)=\alpha\sigma(e,u,v)$
when $e$ is a non-loop edge joining $u$ to a vertex $v$, and
in this case $\sigma_{u,\alpha}(e,v,u)$ is defined to be
$\sigma(e,v,u)\alpha^{-1}$.
The operation that produces $\sigma_{u,\alpha}$ from $\sigma$ is
called \emph{switching}.
Two gain functions that are related by switching have exactly the same
balanced cycles \cite[Lemma 5.2]{Zas89}.

The next \namecref{nibble} treats bicircular matroids
and gain-graphic matroids
simultaneously, since the arguments are essentially
identical.

\begin{theorem}
\label{nibble}
The class of $3$\dash connected bicircular matroids is efficiently
pigeonhole.
If $H$ is a finite group,
then the class of $3$\dash connected \hgg\ matroids
is efficiently pigeonhole.
\end{theorem}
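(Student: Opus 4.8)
The plan is to build, for each $3$\dash connected matroid $M$ in the class and each $U\subseteq E(M)$ with $\lambda_{M}(U)\leq \lambda$, an efficiently computable equivalence relation $\approx_{U}$ that refines $\sim_{U}$ and has boundedly many classes. Write $M=M(G,\mcal{B})$, where for bicircular matroids $\mcal{B}$ contains no non-loop cycle, and for \hgg\ matroids $\mcal{B}$ is the linear class of $\sigma$\dash balanced cycles; take $\Delta(M)$ to be $G$, together with $\sigma$ in the gain-graphic case, so that the independence oracle and minor operations are clearly polynomial-time. Since $M$ is $3$\dash connected, \Cref{rouble} supplies a set $N$ of at most $14\lambda-12$ vertices, namely those incident with edges of both $U$ and $V=E(M)\ba U$; note that any vertex of $G[X]$ (with $X\subseteq U$) that is shared with $G[Z]$ (with $Z\subseteq V$) lies in $N$.

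Next I would attach a \emph{signature} to each independent $X\subseteq U$. As $X$ is independent in a frame matroid, every component of $G[X]$ is a tree or is unicyclic with an unbalanced cycle. The signature records, for each component $C$ of $G[X]$ that meets $N$: the block $N_{C}=V(C)\cap N$; whether $C$ is a tree or is unbalanced-unicyclic; and, when $C$ is a tree, the function sending each $v\in N_{C}$ to the gain of the unique path in $C$ from the least element of $N_{C}$ to $v$, which is well defined and, in the balanced setting, insensitive to path choice by \Cref{quahog}. (When $H$ is trivial there are no unbalanced cycles, so the second type of component never arises; otherwise $H$ has a non-identity element, and an unbalanced loop is available in $G$, which we use below. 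For bicircular matroids unbalanced loops are likewise available.) Declare $X\approx_{U}X'$ when both are dependent, or both are independent with equal signatures. Because $|N|\leq 14\lambda-12$, the number of signatures is bounded by a function $\pi(\lambda)$ of $\lambda$ alone in the bicircular case, and by a function of $\lambda$ and $|H|$ in the gain-graphic case; and a signature is computable in time polynomial in $|E(M)|$ by finding components, spanning trees, and path gains, so $X\approx_{U}X'$ is decidable within the bound required by \Cref{yakuza-II}.

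The substance of the proof is that $\approx_{U}$ refines $\sim_{U}$: if $X,X'$ are independent with equal signatures, then $X\cup Z$ is independent exactly when $X'\cup Z$ is, for all $Z\subseteq V$. I would establish the stronger claim that the independence of $X\cup Z$ is a function of the signature of $X$ and of $Z$ alone. From the signature, form a small reduced gain graph $G_{X}$ by replacing each component $C$ of $G[X]$ that meets $N$ by a tree on $N_{C}$ realising the recorded path-gains (if $C$ is a tree), or by a tree on $N_{C}\cup\{p_{C}\}$ with $p_{C}$ new, together with an unbalanced loop at $p_{C}$ (if $C$ is unbalanced-unicyclic). This preserves, for each such component, the quantity $|E|-|V|$ and the intersection with $N$; since distinct components of $G[X]$ are vertex-disjoint, distinct components of $G[Z]$ are vertex-disjoint, and all overlaps between the two families lie in $N$, the component structure and the first Betti number of every component of $G[X\cup Z]$ are exactly reproduced by $G_{X}\cup G[V]$ restricted to $E(G_{X})\cup Z$. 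A short case analysis --- a cycle lying inside one component of $G[X]$, a cycle running through several such components and through $Z$, or a cycle lying inside $G[Z]$ --- together with \Cref{quahog} (a cycle meeting a tree component enters and leaves along a path between two vertices of $N$, whose gain the reduction preserves), shows that a unicyclic component has a balanced cycle before the reduction if and only if it does afterwards. As a connected subgraph is independent precisely when its Betti number is $0$, or is $1$ with the cycle unbalanced, independence of $X\cup Z$ depends only on $G_{X}$ and $Z$, as needed; the same argument covers bicircular and \hgg\ matroids, the bicircular case being the one in which the recorded gains are trivial but unbalanced non-loop cycles still occur.

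I expect the reduction in the last paragraph to be the main obstacle: arranging the gadgets so that Betti numbers, the merging pattern of components, and --- crucially --- the balanced-versus-unbalanced status of the surviving cycles are all preserved simultaneously, and then dispatching the handful of cases for how a cycle of $G[X\cup Z]$ can be situated. Bounding the number of signatures and checking polynomial-time computability are routine once \Cref{rouble} and \Cref{quahog} are in hand.
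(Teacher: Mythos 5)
Your signature is the same as the one the paper uses: for each component of $G[X]$ meeting $N$ you record the block of $N$, the balanced/unbalanced status, and (for balanced components in the gain-graphic case) the relative path gains between the $N$-vertices --- recording gains from a fixed basepoint is interchangeable with recording all pairwise gains, which is what the paper does. Where you diverge is in proving that $\approx_{U}$ refines $\sim_{U}$. The paper's argument is ``surgical'': it takes a circuit $C'\subseteq X'\cup Z$, excises the pieces $C'\cap X'$ component by component, substitutes pieces of $X$ lying in the matching components of $G[X]$, and then tracks $\nu=|E|-|V|$ and cycle balance through the substitution to show the result is still dependent. Your argument is ``model-theoretic'': you build from the signature alone a canonical reduced gain graph $G_{X}$ and claim that the dependence of $X\cup Z$ can be read off $G_{X}\cup Z$. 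Both are legitimate, and your version has the virtue of making it transparent that independence of $X\cup Z$ is literally a function of the signature and $Z$, rather than only proving the equivalence of $X$ with $X'$; in exchange it pushes the work into verifying that the gadget reproduces the component structure, the $\nu$-values, and the balance of each surviving cycle.

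The one place you should be careful is the last of these. Your gadget for an unbalanced-unicyclic component $C$ (a star on $N_{C}\cup\{p_{C}\}$ with an unbalanced loop at $p_{C}$) carries no gain data for the tree edges, since the signature records none, so the gain of a path in the gadget between two distinct vertices of $N_{C}$ is not determined by the signature. This is harmless, but you need to say why: if a cycle $O$ of $G[X\cup Z]$ uses an edge of $Z$ and also passes through an unbalanced component of $G[X]$, then the component $K$ of $G[X\cup Z]$ containing $O$ also contains the unbalanced cycle of $C$, which is distinct from $O$, so $\nu(K)\geq 1$ and $K$ is dependent regardless of whether $O$ is balanced --- and $\nu$ is preserved by your reduction, so both sides are dependent. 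Likewise, if $O$ enters and leaves a single tree component $C$ of $G[X]$ twice, the (unused) path of $C$ linking the two segments creates a second cycle in $K$, again forcing $\nu(K)\geq 1$. So the only case in which the balance of $O$ actually decides dependence is when $O$ crosses each tree component of $G[X]$ at most once and touches no unbalanced component; there the recorded gains pin down the gain of every $X$-segment and the reduction preserves balance exactly, by \Cref{quahog}. Once you make that case split explicit, your reduction closes the argument and yields the theorem; the remaining points you flag as routine (counting signatures, polynomial computability, minor-compatibility of the graph/gain-graph representation) are indeed routine.
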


\begin{proof}
Let $M$ be a $3$\dash connected matroid that is either bicircular or \hgg.
If $M$ is bicircular then it is succinctly represented by a description of a graph and a list of the balanced loops.
An \hgg\ matroid is described via a graph and a labelling
that assigns an element of $H$ to each orientation of an edge.
Let $G$ be the graph that represents $M$, so that $G$ is unlabelled
if $M$ is bicircular, and labelled if $M$ is \hgg.
We can assume that $G$ has no isolated vertices.

Let $(U,V)$ be a partition of $E$ such that $\lambda_{M}(U)\leq \lambda$
for some positive integer $\lambda$.
Let $N$ be the set of vertices that are in both $G[U]$ and $G[V]$,
so that $|N|\leq 14\lambda-12$ by \Cref{rouble}.
We will describe an equivalence relation $\approx_{U}$ on the subsets of $U$ and then show that $\approx_{U}$ can be computed in time bounded by $O(\pi(\lambda)|E(M)|^{c})$ for some function $\pi$ and some constant $c$.
Moreover, we will prove that $\approx_{U}$ satisfies conditions (i) and (ii) of \Cref{yakuza-II}.

Let $X$ be a subset of $U$.
If $M$ is \hgg\ then we can choose a maximal forest of $G[X]$ and then perform switching operations so that every edge in the forest is labelled with the identity of $H$ \cite[Lemma 5.3]{Zas89}.
If $M$ is bicircular, we choose the maximal forest but we do not need to perform any switchings.
Now $X$ is dependent in $M$ if and only if there is an edge not in the spanning forest that receives an identity label, or if there are two distinct edges not in the maximal forest that are in the same component of $G[X]$.
Thus we can test the independence of $X$ in polynomial time.

Let $X$ and $X'$ be subsets of $U$.
We consider the circumstances under which we declare $X$ and $X'$ to be equivalent under $\approx_{U}$.
Firstly, if $X$ and $X'$ are both dependent, then $X \approx_{U} X'$.
If exactly one of $X$ and $X'$ is dependent, then $X \not\approx_{U} X'$.
Now assume that both $X$ and $X'$ are independent.
In this case, any component of $G[X]$ or $G[X']$ contains at most one cycle.
Let $u$ and $v$ be vertices in a component of $G[X]$ or $G[X']$.
Let $\Gamma$ be this component.
We claim there are at most two paths of $\Gamma$ that join $u$ to $v$.
This is clear if $\Gamma$ is a tree, so assume that $\Gamma$ contains exactly one cycle.
Let $e$ be an edge of $\Gamma$ such that $\Gamma\backslash e$ is a tree.
If there are three distinct paths from $u$ to $v$ in $\Gamma$, then two of them use $e$.
Now there must be two distinct paths of $\Gamma\backslash e$ from either $u$ or $v$ to an end-vertex of $e$.
This is impossible so our claim is proved.

Let $\mathcal{X}$ and $\mathcal{X}'$ be the sets of connected components in $G[X]$ and $G[X']$, respectively, that have non-empty intersection with $N$.
In the case that $X$ and $X'$ are both independent, we declare that $X\approx_{U} X'$ if there is a bijection $\theta\colon \mathcal{X}\to\mathcal{X'}$ such that the following statements hold for every $\Gamma\in\mathcal{X}$,
\begin{enumerate}[label = \textup{(\roman*)}]
\item $\Gamma\cap N= \theta(\Gamma)\cap N$,
\item $\Gamma$ contains a cycle if and only if $\theta(\Gamma)$ contains a cycle, and
\item in the case that $M$ is \hgg\ and $u$ and $v$ are vertices of $\Gamma\cap N$, there is a path of $\Gamma$ from $u$ to $v$ with gain-value $h$ if and only if there is a path of $\theta(\Gamma)$ from $u$ to $v$ with gain-value $h$.
\end{enumerate}

It is clear that $\approx_{U}$ is an equivalence relation.
Next we count the equivalence classes.
Let $\tau(|N|)$ be the number of partitions of $N$ where at most one block of the partition is allowed to be empty.
We will think of the non-empty blocks in this partition as being the intersections of components of $G[X]$ with $N$.
Thus $\tau(|N|)$ counts the number of possible such intersections.
Note that $\tau$ is a non-decreasing function on the integers.

To choose an equivalence class of $\approx_{U}$, we first choose a non-empty collection of pairwise disjoint subsets of $N$.
Since the size of $N$ is at most $14\lambda-12$, we can do this in at most $\tau(14\lambda-12)$ ways.
Next we choose whether each of these components has a cycle or not.
The number of components that intersect $N$ is at most $14\lambda-12$, so we can make this choice in at most $2^{14\lambda-12}$ ways.
Finally, for each pair $(u,v)\in N\times N$, we choose at most two gain-values in $H$ for paths from $u$ to $v$.
The number of ways we can make this choice is at most $|H|^{2(14\lambda-12)^{2}}$.
Thus the number of equivalence classes under $\approx$ is at most
\[
\tau(14\lambda-12)2^{14\lambda-12}|H|^{2(14\lambda-12)^{2}}.
\]
Let this number be denoted by $\pi(\lambda)$.
Since $H$ is fixed, $\pi(\lambda)$ depends only on $\lambda$.
It is clear that we can test the equivalence $X\approx_{U} X'$ in time bounded by $\pi(\lambda)|E(M)|^{c}$ for some constant $c$.
Now we can complete the proof of \Cref{nibble} by showing that $\approx_{U}$ refines $\sim_{U}$.

To this end, assume that $X$ and $X'$ are independent subsets of $U$ and that $X\approx_{U} X'$.
Assume that $X\cup Z$ is dependent for
some $Z\subseteq V$, and let $C$ be a circuit of $M$ contained in $X\cup Z$.
We will prove that $X'\cup Z$ is also dependent in $M$ and this will complete the proof.

First assume that $C$ is a balanced cycle.
If $C$ is a balanced loop, then it is contained in $Z$, since $X$ is independent.
In this case $X'\cup Z$ is dependent and we have nothing left to prove.
Therefore we assume that $C$ is a balanced cycle with more than one edge, so $M$ is an \hgg\ matroid.
Now each component of $G[X\cap C]$ is a path, $P$, between two vertices of $N$.
We can replace each such $P$ with a path $P'$ of $G[X']$ running between the same vertices.
Moreover, since  $X\approx X'$ holds, we can choose $P'$ so that $\sigma(P')=\sigma(P)$.
When we perform all these substitutions on $C$ we obtain a walk $W$ in $G[X'\cup Z]$, where $\sigma(W)$ is the identity of $H$.
It is now easy to prove that $G[W]$ contains either a balanced cycle, or two distinct cycles.
In either case $G[X'\cup Z]$ is dependent in $M$ so we are done.

Now we assume that $C$ is a theta subgraph or a handcuff.
Let $\Gamma_{1},\ldots, \Gamma_{n}$ be the connected components of $G[X]$ that
contain edges of $C$.
For each $i$ let $D_{i}$ be the set of edges of $C$ contained in $\Gamma_{i}$.
Note that $D_{1},\ldots, D_{n}$ are pairwise disjoint sets of edges.
Because $X\approx X'$ we can make a choice of $D_{i}'$, a minimal set of edges in $X'$
for each $i$ such that the following conditions hold.
\begin{enumerate}[label = \textup{(\roman*)}]
\item $G[D_{i}']$ is connected,
\item every vertex of $G[D_{i}]\cap N$ is in $G[D_{i}']$, and
\item $G[D_{i}']$ contains a cycle if and only if $G[D_{i}]$ contains a cycle.
\end{enumerate}
Let $C_{0}$ be $C$.
For each $i$ let $C_{i}$ be the subgraph obtained from
$C_{i-1}$ by replacing the edges of $D_{i}$ with $D_{i}'$.
Thus $C_{n}$ is a subgraph of $G[X'\cup Z]$.
It is clear that each $C_{i}$ is connected.
We will show that $C_{n}$ contains at least two cycles, and then we will be done.

For any graph, $\Gamma$, let $\nu(\Gamma)$ be $|E(\Gamma)|-|V(\Gamma)|$.
If $\Gamma$ is connected, then $\nu(\Gamma)\geq-1$.
If $\Gamma$ is connected and contains exactly one cycle, then
$\nu(\Gamma)=0$.
Let $(L,R)$ be a partition of $E(\Gamma)$,
and assume that $\gamma$ vertices are incident with edges in both
$L$ and $R$.
It is easy to confirm that
\begin{equation}
\label{eqn2}
\nu(\Gamma)=\nu(\Gamma[L])+\nu(\Gamma[R])+\gamma.
\end{equation}

Note that $\nu(C_{0})=1$.
We assume inductively that $\nu(C_{i-1})\geq 1$.
Note that $G[D_{i}]$ may not be connected, but $G[D_{i}']$ is connected.
Our choice of $D_{i}'$ means that $\nu(D_{i}') \geq \nu(D_{i})$.
Furthermore, $G[D_{i}']$ has at least as many vertices in common with
$G[C_{i-1}-D_{i}]$ as $G[D_{i}]$ does.
It now follows from \eqref{eqn2} that $\nu(C_{i})\geq \nu(C_{i-1})\geq 1$.
Thus $\nu(C_{n})\geq 1$, and since $C_{n}$ is connected, it follows that
$C_{n}$ contains at least two cycles, as required.
\end{proof}

\begin{corollary}
\label{lichen}
Let \mcal{M} be the class of bicircular or
\hgg\ matroids (with $H$ a finite group).
Let $\psi$ be any sentence in \cmso.
We can test whether matroids in \mcal{M} satisfy $\psi$ using an 
algorithm that is fixed-parameter tractable with respect to branch-width.
\end{corollary}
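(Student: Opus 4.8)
The plan is to deduce \Cref{lichen} from \Cref{nibble} together with \cite[Theorem 6.11]{FMN-I}. Recall that \cite[Theorem 6.11]{FMN-I} is the strengthening of \Cref{lounge-II} which only requires that the $3$\dash connected members of the class form an efficiently pigeonhole class, at the cost of an additional hypothesis: that we can efficiently construct descriptions of minors. For both the class of bicircular matroids and the class of \hgg\ matroids with $H$ a finite group, \Cref{nibble} supplies the first hypothesis verbatim, so the only work is to verify the minor-construction requirement and to confirm that the graph-based descriptions are genuine succinct representations.

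Concretely, I would take the representations used in the proof of \Cref{nibble}: a bicircular matroid is recorded by its graph $G$ together with the list of balanced loops, and an \hgg\ matroid by its graph $G$ together with the gain labelling $\sigma\colon A(G)\to H$. First I would check that these satisfy \Cref{umpire-II}: given $X\subseteq E(G)$, one tests independence in $M(G,\mcal{B})$ by verifying that each connected component of $G[X]$ contains at most one cycle and no balanced cycle. In the gain-graphic case, testing whether $G[X]$ has a balanced cycle is done by computing a spanning forest, switching so that its edges all carry the identity of $H$, and checking whether every remaining edge carries the identity, exactly as in \Cref{nibble} via \cite[Lemma 5.3]{Zas89}; this runs in polynomial time, and in particular the independence oracle is polynomial. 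Then I would verify the minor-construction hypothesis: deletion of an edge is literally edge deletion in $G$ (with the obvious bookkeeping of the balanced-loop list or the gain value), and contraction is the standard operation of Zaslavsky's biased-graph calculus --- for a non-loop edge $e$ one switches so that $\sigma(e)$ is the identity, identifies the endpoints of $e$, and deletes $e$ (with $\mcal{B}/e$ computed from $\mcal{B}$), while loops are handled by the usual case analysis. Each such operation is performable in polynomial time from the given description.

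The step I expect to require the most care is precisely this contraction bookkeeping, together with matching it to the exact technical form of the minor-construction hypothesis in \cite[Theorem 6.11]{FMN-I} (the excerpt stresses that this is what makes that theorem formally independent of \Cref{lounge-II}). One must confirm that the descriptions produced for the minors that actually arise in the parsing algorithm remain of the required kind --- bicircular, respectively \hgg\ --- rather than drifting into the larger class of frame matroids, since it is only for the former that \Cref{nibble} gives an efficiently pigeonhole class of $3$\dash connected members. In the gain-graphic case one also needs the switching steps to be carried out consistently so that gain values along paths are preserved under contraction, which is the content of \Cref{quahog}. Once these points are settled, \cite[Theorem 6.11]{FMN-I} applied with the input class $\mcal{M}$ and the sentence $\psi\in\cmso$ yields the desired fixed-parameter tractable algorithm with respect to branch-width.
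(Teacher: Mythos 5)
Your proposal follows essentially the same route as the paper: deduce the corollary from \Cref{nibble} together with the FMN-I theorem for classes whose $3$\dash connected members are efficiently pigeonhole, and reduce the remaining work to checking that the graph-based representations of bicircular and \hgg\ matroids are minor-compatible in the sense of \Cref{toffee-II}, which the paper also does via Zaslavsky's biased-graph calculus exactly as you sketch. A minor discrepancy: the paper's proof actually invokes \cite[Theorem 6.7]{FMN-I} rather than \cite[Theorem 6.11]{FMN-I}, and it spells out the unbalanced-loop contraction explicitly (delete the incident vertex, replace each non-loop edge at that vertex with a loop bearing a non-identity gain, and add the remaining loops back as balanced loops) where you defer to ``the usual case analysis''---but the substance of the argument is the same.
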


\begin{proof}
This will follow immediately from \cite[Theorem 6.7]{FMN-I} and \Cref{nibble}
%kibble
if we show that the succinct representations of bicircular
and \hgg\ matroids are minor-compatible.
We rely on \cite[Corollary 5.5]{Zas89} and \cite[Theorem 2.5]{Zas91}.
Let $M$ be a bicircular or \hgg\ matroid corresponding to the
graph $G$, and let $e$ be an edge of $G$.
Then $M\ba e$ is bicircular or \hgg, and corresponds to
$G\ba e$.
(In the case that $M$ is \hgg, the edge-labels in $G\ba e$ are
inherited from $G$.)

Contraction is somewhat more technical.
If $e$ is a non-loop, then
we first perform a switching (in the \hgg\ case)
so that the gain-value on $e$ is the identity.
We then simply contract $e$ from $G$.
The resulting labelled graph represents $M/e$.
Now assume $e$ is a loop of $G$ incident with the vertex $u$.
If $e$ is a balanced loop, we simply delete $e$, so now assume that $e$
is an unbalanced loop.
In the \hgg\ case, this implies that $H$ is non-trivial.
We obtain the graph $G'$ by deleting $u$ and
replacing each non-loop edge, $e'$, incident with $u$
with a loop incident with the other end-vertex of $e'$.
In the \hgg\ case, the loop $e'$ is labelled with any non-identity element.
Any other loops of $G$ that are incident with $u$ are added as balanced loops after contracting $e$.

It is clear that the operations of deletion and contraction can be
performed in polynomial time, so the classes of
bicircular  and \hgg\ matroids have
minor-compatible succinct representations as desired.
\end{proof}

\Cref{lichen} completes the proof of \Cref{hammer}.

\begin{remark}
\label{fodder}
Hlin\v{e}n\'{y} has shown \cite[p.~348]{Hli06c} that his work provides an
alternative proof of Courcelle's Theorem.
We can provide a simple new proof by relying on \Cref{lichen} and using
bicircular matroids as models for graphs.
We now briefly explain this strategy.

Let $\psi$ be a sentence in the counting monadic
second-order logic, \cmstwo\ of graphs.
This means that we can quantify over variables representing vertices,
edges,  sets of vertices and set of edges.
We have binary predicates for set membership, and also
an incidence predicate, which
allows us to express that an edge is incident with a vertex.
Furthermore, we have predicates which allow us to assert that a
set has cardinality $p$ modulo $q$, for any appropriate choice of
$p$ and $q$.
We need to show that there is a fixed-parameter tractable
algorithm for testing $\psi$ in graphs, with respect to the
parameter of tree-width.

Let $G$ be a graph, and let $G^{\circ}$ be the
graph obtained from $G$ by adding two loops at every vertex.
We need to interpret $\psi$ as a sentence about bicircular
matroids of the form $B(G^{\circ})$.
We let $\vertex(X_{i})$ be the \cmso\ formula stating that
$X_{i}$ is a $2$\dash element circuit.
Similarly, we let $\edge(X_{i})$ be a formula expressing that
$X_{i}$ is a singleton set not contained in a $2$\dash element circuit.
Now we make the following interpretations in $\psi$:
if $v$ is a vertex variable, we replace $\exists v$ with
$\exists X_{v} \vertex(X_{v})\ \land$, and we replace
$\forall v$ with $\forall X_{v} \vertex(X_{v})\ \to$.
We perform a similar replacement for variables representing
edges.
If $V$ is a variable representing a set of vertices, we replace $\exists V$ with
\[
\exists X \forall X_{1} (\sing(X_{1}) \land X_{1}\subseteq X)\to\\
\exists X_{2} (X_{1}\subseteq X_{2}\land X_{2}\subseteq X\land \vertex(X_{2})) \land
\]
where $\sing(X_{1})$ is a predicate expressing that $X_{1}$ contains exactly
one element.
There are similar replacements for variables representing sets of edges and for universal quantifiers.
Finally, we replace any occurrence of the predicate stating that
$e$ is incident with $v$ with a \cmso\ formula saying that
there is a $3$\dash element circuit that contains $X_{e}$ and one
of the elements in $X_{v}$.
We let $\psi'$ be the sentence we obtain by making these
substitutions.
It is clear that a graph, $G$, satisfies $\psi$ if and only if
$B(G^{\circ})$ satisfies $\psi'$.
Therefore \Cref{lichen} implies that there is a
fixed-parameter tractable algorithm for testing whether
$\psi'$ holds in matroids of the form $B(G^{\circ})$,
with respect to the parameter of branch-width.

To find the branch-width of a graph with edge set $E$, we consider a subcubic
tree, $T$, and a bijection from $E$ to the leaves of $T$.
If $(U,V)$ is a partition of $E$ displayed by an edge, $e$, of $T$, then we count
the vertices incident with edges in both $U$ and $V$.
This gives us the \emph{width} of $e$, and the maximum width of an
edge of $T$ is the width of the decomposition.
The lowest width across all such decompositions is the branch-width of the graph.
It is not difficult to see that the branch-width of the matroid $B(G^{\circ})$
is bounded by a function of the branch-width of the graph $G$,
and similarly the branch-width of $G$ is bounded by a function of the
branch-width of $B(G^{\circ})$.
But exactly the same relation holds between the branch-width and the
tree-width of $G$ \cite[(5.1)]{RS91}.
Now it follows that there is a fixed-parameter tractable algorithm for
testing whether $\psi$ holds in graphs, where the parameter is tree-width.
This proves Courcelle's Theorem \cite{Cou90}.
\end{remark}

When $H$ is not finite, the class of \hgg\ matroids is
not even pigeonhole, as we now show.
First we require the following \namecref{raunch}.

\begin{proposition}
\label{raunch}
Let $H$ be an infinite group, and let $m$ and $n$
be positive integers.
There are disjoint subsets $A,B\subseteq H$ such that
$|A|=m$, $|B|=n$, and
$\{ab\colon a\in A, b\in B\}$ is disjoint from $A\cup B$
and has cardinality $mn$.
\end{proposition}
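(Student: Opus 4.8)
The plan is to build $A$ and $B$ greedily, one element at a time, using the fact that an infinite group cannot be covered by finitely many left or right cosets of the trivial subgroup — more simply, that removing finitely many elements from $H$ leaves infinitely many choices. I would construct $A=\{a_{1},\dots,a_{m}\}$ and $B=\{b_{1},\dots,b_{n}\}$ so that at each stage the new element avoids a finite ``forbidden'' set determined by the elements chosen so far, where the forbidden set is engineered so that the three required properties (disjointness of $A$ and $B$, disjointness of the product set $AB$ from $A\cup B$, and $|AB|=mn$) hold automatically at the end.

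Concretely, first pick any element $a_{1}\in H$. Having chosen $a_{1},\dots,a_{k}$ (with $k<m$), note that each of the conditions we must preserve rules out only finitely many values for $a_{k+1}$: requiring $a_{k+1}\notin\{a_{1},\dots,a_{k}\}$ forbids $k$ elements; and since $B$ has not yet been chosen we defer the product conditions. So simply choose $a_{1},\dots,a_{m}$ distinct — possible since $H$ is infinite. Now choose $b_{1},\dots,b_{n}$ in turn. Suppose $b_{1},\dots,b_{\ell}$ have been selected; I must pick $b_{\ell+1}\in H$ avoiding the following finitely many values: (i) the elements of $A$ and of $\{b_{1},\dots,b_{\ell}\}$, to keep $A,B$ disjoint; (ii) any value $b$ with $a_{i}b=a_{j}$ for some $i,j$, i.e. $b=a_{i}^{-1}a_{j}$ — at most $m^{2}$ values — to keep $AB\cap A=\emptyset$; (iii) any $b$ with $a_{i}b=b_{j}$ for some $i\le m$, $j\le\ell$, i.e. $b=a_{i}^{-1}b_{j}$ — at most $m\ell$ values — to keep $a_{i}b_{\ell+1}\notin B$; (iv) any $b$ with $a_{i}b_{\ell+1}=a_{i'}b_{j}$ for some $j\le\ell$, i.e. $b=a_{i}^{-1}a_{i'}b_{j}$ — at most $m^{2}\ell$ values — to keep the new products distinct from the old ones; and (v) any $b$ with $a_{i}b=a_{i'}b$ for $i\neq i'$, which is impossible anyway, and any $b$ with $b_{j}=a_{i'}b_{\ell+1}$, handled symmetrically under (iii). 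Each of these is a finite set, so their union is finite, and since $H$ is infinite a valid $b_{\ell+1}$ exists.

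At the end of this process $A$ and $B$ are disjoint sets of sizes $m$ and $n$. Condition (ii) at every stage ensures $a_{i}b_{\ell}\neq a_{j}$ for all $i,j,\ell$, so $AB\cap A=\emptyset$; condition (iii) ensures $AB\cap B=\emptyset$; hence $AB$ is disjoint from $A\cup B$. Finally, to see $|AB|=mn$ I check that the $mn$ products $a_{i}b_{j}$ are pairwise distinct: if $a_{i}b_{j}=a_{i'}b_{j'}$ with $j<j'$ (or $j=j'$, $i\ne i'$), then at the stage where $b_{j'}$ was chosen this equality was forbidden by (iv) (respectively, it forces $a_{i}=a_{i'}$, contradicting distinctness of $A$); and the case $j=j'$, $i=i'$ is trivial. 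Thus all three conclusions hold.

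The only mildly delicate point — and the one I'd present carefully — is the bookkeeping in (iv): one must make sure that when introducing $b_{\ell+1}$ the new products $\{a_{i}b_{\ell+1}\}_{i}$ are kept disjoint not only from earlier products but, where $i\ne i'$, that no accidental coincidence $a_{i}b_{\ell+1}=a_{i'}b_{\ell+1}$ can arise; the latter cannot, since it would give $a_{i}=a_{i'}$. So the genuinely substantive forbidden set is the $O(m^{2}\ell)$ values of the form $a_{i}^{-1}a_{i'}b_{j}$, and since this is finite at each step the greedy construction never gets stuck. No step presents a real obstacle beyond keeping this list of finitely many exclusions complete.
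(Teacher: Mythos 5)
Your greedy construction is essentially the same idea as the paper's proof: the paper settles the base cases $m=1$ and $n=1$ directly, then inducts on $m+n$ by adjoining one element to $A$ at a time, each time chosen outside a finite union of product/quotient sets. You instead fix $A$ at the outset and extend $B$ one element at a time, which is the symmetric variant of the same argument.

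That said, your forbidden set is not complete, and two of the omissions actually break the construction as written. First, you choose $A$ to be ``any $m$ distinct elements.'' If the identity $e$ happens to be chosen into $A$, then $eb_{j}=b_{j}\in B$ for every $j$, so $AB\cap B\neq\emptyset$ no matter how $B$ is subsequently built; since none of your conditions (i)--(v) constrain $A$, this cannot be repaired later and must be imposed at the moment $A$ is fixed. (The paper gets this for free by requiring its candidate element of $A$ to avoid $\{b_{1}b_{2}^{-1}\colon b_{1},b_{2}\in B\}$, a set that contains $e$.) Second, at stage $\ell+1$ you never forbid $b_{\ell+1}$ from coinciding with an already-created product $a_{i}b_{j}$ with $j\le\ell$: if $b_{\ell+1}=a_{i}b_{j}$, then that product now lies in $B$, again violating $AB\cap B=\emptyset$. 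Your condition (iii) keeps the \emph{new} products $a_{i}b_{\ell+1}$ out of the \emph{old} $B$, but not the \emph{old} products out of the newly enlarged $B$, and (iv) does not cover this case either. Both gaps are cheap to close --- demand $e\notin A$ when $A$ is chosen, and add the at most $m\ell$ values $a_{i}b_{j}$ ($i\le m$, $j\le\ell$) to the forbidden set at stage $\ell+1$ --- after which the argument goes through.
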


\begin{proof}
Assume that $m=1$.
Choose $B$, an arbitrary subset of $n$ elements
that does not include the identity.
The cancellation rule implies the result if we let $A$ be a
singleton set containing an element
not in $B\cup \{b_{1}b_{2}^{-1}\colon b_{1},b_{2}\in B\}$.
The result similarly holds if $n=1$.
Now we let $m$ and $n$ be chosen so that
$m+n$ is as small as possible
with respect to the \namecref{raunch} failing.
Let $A'$ and $B$ be disjoint subsets such that
$|A'|=m-1$, $|B|=n$, and
$\{ab\colon (a,b)\in A'\times B\}$ has cardinality $(m-1)n$
and is disjoint from $A'$ and $B$.
We choose an element $x$
not in $A'\cup B$ that does not belong to
$\{ab^{-1}\colon a\in A, b\in B\}$, nor to
$\{b_{1}b_{2}^{-1}\colon b_{1},b_{2}\in B\}$, nor to
$\{ab_{1}b_{2}^{-1}\colon a\in A,b_{1},b_{2}\in B\}$.
Now we simply let $A$ be $A\cup \{x\}$.
\end{proof}

\begin{proposition}
\label{object}
Let $H$ be an infinite group.
There are rank\dash $3$ \hgg\ matroids with arbitrarily high
decomposition-width.
Hence the class of \hgg\ matroids is not pigeonhole.
\end{proposition}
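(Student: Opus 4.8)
The plan is to realise, for each $n$, a rank\dash $3$ \hgg\ matroid $M_{n}$ with $\Theta(n^{2})$ elements and show directly that $\dw(M_{n})\to\infty$. One should first note that the matroids $m(K_{N})$ of \Cref{apogee} are unavailable: a rank\dash $3$ \hgg\ matroid, being a restriction of a rank\dash $3$ Dowling geometry, has only boundedly many elements no three of which are collinear, whereas $m(K_{N})$ needs $N$ elements in general position, so $m(K_{N})$ is not \hgg\ for $N$ large. Instead I would apply \Cref{raunch} with $m=n$ to get $A,B\subseteq H$ with $|A|=|B|=n$ and $|AB|=n^{2}$ (the feature used being that $(a,b)\mapsto ab$ is injective on $A\times B$), and let $\Gamma_{n}$ be the gain graph on vertices $\{1,2,3\}$ with parallel edges between $1$ and $2$ whose gains read $1\to2$ are the elements of $A$, parallel edges between $2$ and $3$ with gains read $2\to3$ the elements of $B$, and parallel edges between $1$ and $3$ with gains read $1\to3$ the elements of $AB$; set $M_{n}=M(\Gamma_{n},\mcal{B}_{n})$ for the resulting linear class. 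Since $\Gamma_{n}$ is connected and unbalanced (distinct parallel edges between $1$ and $2$ form an unbalanced digon), $M_{n}$ is \hgg\ of rank $3$, so $\bw(M_{n})\le4$ (as in the proof of \Cref{apogee}), and it then suffices to show $\sup_{n}\dw(M_{n})=\infty$. Writing $a,b,ab$ for the corresponding elements, the rank\dash $2$ flats of $M_{n}$ are exactly $A$, $B$, $AB$, and the ``transversals'' $\{a,b,ab\}$; each transversal is a balanced triangle, hence a circuit, while $\{a',b,ab\}$ (for $a'\neq a$), $\{a,b',ab\}$ (for $b'\neq b$) and $\{a,b,c\}$ (for $c\in AB\setminus\{ab\}$) are bases, by injectivity of the product map.

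To bound the decomposition\dash width below I would argue by contradiction: suppose $\dw(M_{n})\le K$ for all $n$, fix $n$ large and a decomposition $(T,\varphi)$ with $\dw(M_{n};T,\varphi)\le K$, and use \cite[Lemma 14.2.2]{Oxl11} to choose an edge $e$ with $|U_{e}|,|V_{e}|\ge\frac13(n^{2}+2n)$. Put $A_{U}=A\cap U_{e}$, $A_{V}=A\cap V_{e}$, likewise for $B$, and for $(X,Y,Z)\in\{U,V\}^{3}$ let $c_{XYZ}$ count the pairs $(a,b)$ with $a,b,ab$ on sides $X,Y,Z$. The key observations are: (i) if $c_{UUV}\ge K+1$ or $c_{VVU}\ge K+1$, there are $K+1$ distinct $ab\in V_{e}$ with $a,b\in U_{e}$, and the singletons $\{ab\}\subseteq V_{e}$ are pairwise inequivalent under $\sim_{V_{e}}$ because $Z=\{a,b\}\subseteq U_{e}$ makes $\{a,b,ab\}$ dependent and $\{a,b,a'b'\}$ independent; (ii) if one of $c_{UVV},c_{VUU},c_{UVU},c_{VUV}$ is at least $(K+1)n$, say $c_{UVV}$, then since each $a$ occurs in at most $n$ of those pairs there are $K+1$ distinct $a\in A_{U}$ each with some $b\in B_{V}$, $ab\in V_{e}$, and the singletons $\{a\}\subseteq U_{e}$ are pairwise inequivalent under $\sim_{U_{e}}$ via $Z_{a}=\{b,ab\}\subseteq V_{e}$. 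Each alternative gives more than $K$ classes on one side of $e$ for $n$ large, a contradiction; and the hypotheses of (i) and (ii) are symmetric under exchanging $U$ and $V$.

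It then remains to rule out the case where none of these bounds holds. There $n^{2}=\sum_{X,Y,Z}c_{XYZ}\le c_{UUU}+c_{VVV}+2(K+1)+4(K+1)n$, and using $c_{UUU}\le|A_{U}|\,|B_{U}|$, $c_{VVV}\le|A_{V}|\,|B_{V}|$ with $\alpha=|A_{U}|$, $\beta=|B_{U}|$ this rearranges to $\alpha(n-\beta)+\beta(n-\alpha)\le4(K+1)n+2(K+1)$; this quantity is invariant under $(\alpha,\beta)\mapsto(n-\alpha,n-\beta)$, so for $n$ large it forces either $\alpha,\beta$ or $n-\alpha,n-\beta$ to be at most some constant $c_{0}=c_{0}(K)$. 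After exchanging $U$ and $V$ if necessary, $|A_{U}|,|B_{U}|\le c_{0}$, hence $|AB\cap U_{e}|=|U_{e}|-|A_{U}|-|B_{U}|\ge\frac13n^{2}-2c_{0}$; but at most $2c_{0}n$ of these points $ab$ can have $a\in A_{U}$ or $b\in B_{U}$, so $c_{VVU}\ge\frac13n^{2}-2c_{0}n-2c_{0}>K+1$ for $n$ large, contradicting (i). Thus $\sup_{n}\dw(M_{n})=\infty$, and since the $M_{n}$ are rank\dash $3$ matroids of bounded branch\dash width, the class of \hgg\ matroids is not pigeonhole. The main obstacle is the first step — recognising that $m(K_{N})$ is unavailable and isolating the right substitute $M_{n}$ (the ``three long lines plus a transversal grid'' matroid); once $M_{n}$ is in hand the rest is routine bookkeeping, the only wrinkle being that a transversal witness determines a singleton only after pigeonholing over one coordinate, which is why the thresholds in (ii) carry the factor $n$.
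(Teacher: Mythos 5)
Your proposal is correct and arrives at the same matroids $M_{n}$ as the paper's proof (three long lines on three vertices, with gains supplied by \Cref{raunch}), but the lower bound on decomposition-width is argued genuinely differently. The paper follows the template of \Cref{apogee}: it applies Zn\'{a}m's Zarankiewicz-type theorem to extract a monochromatic $K_{d,d}$ in a bipartite graph on $A\cup B$, and then exhibits $d^{2}$ (or $d$) pairwise-inequivalent two-element subsets. You instead classify each ``transversal'' triple $\{a,b,ab\}$ by which side of the displayed separation each of its three elements falls, producing eight counts $c_{XYZ}$, and then show that (i) a large count of type $c_{UUV}$ or $c_{VVU}$ directly gives many pairwise-inequivalent singletons from $AB$; (ii) a large mixed count gives, after pigeonholing over one coordinate, many pairwise-inequivalent singletons from $A$ or $B$; and (iii) if neither bound holds, a short inequality manipulation in $\alpha=|A_{U}|,\ \beta=|B_{U}|$ forces one side of the separation to be almost entirely $AB$, contradicting the bound on $c_{VVU}$ from (i). Your route is more elementary and self-contained than the paper's, dispensing with the extremal-graph-theoretic input and with the careful up-front choice of parameters $d,p,q$; what you lose is the parallelism with the Erd\H{o}s--Rado argument in \Cref{apogee}. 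Both approaches need \Cref{raunch} to guarantee that $(a,b)\mapsto ab$ is injective on $A\times B$ and that $A,B,AB$ are pairwise disjoint, which is what makes the transversals the only non-spanning circuits meeting all three lines, and your verifications of independence (e.g.\ $\{a',b,ab\}$ a basis for $a'\neq a$) all rest on this correctly.

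One small remark: in (i) you phrase both $c_{UUV}\ge K+1$ and $c_{VVU}\ge K+1$ in terms of singletons in $V_{e}$ inequivalent under $\sim_{V_{e}}$, but of course the $c_{VVU}$ case produces singletons in $U_{e}$ inequivalent under $\sim_{U_{e}}$; this is clearly what you intend and is the version you later invoke to close the argument, so it is only a slip of the pen.
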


\begin{proof}
Assume otherwise, and let $K$ be an
integer such that $\dw(M)\leq K$ whenever $M$ is
a rank\dash $3$ \hgg\ matroid.

Zn\'{a}m proved that if a bipartite graph with
$n$ vertices in each side of its bipartition has more than
$(d-1)^{1/d}n^{2-1/d}+n(d-1)/2$ edges, then it has
a subgraph isomorphic to $K_{d,d}$ \cite{Zna63}.
Choose an integer $d$ such that $d^{2}>K$.
Choose the integer $p$ so that
\[
\frac{1}{2}p^{2}>(d-1)^{1/d}p^{2-1/d}+\frac{1}{2}p(d-1).
\]
Finally, choose the integer $q$ such that $q-p\geq q/2\geq p$
and
\[
\frac{1}{3}(q^{2}+2q)-p(2q - p + 2)
>(d-1)^{1/d}(q-p)^{2-1/d}+\frac{1}{2}(q-p)(d-1).
\]

Using \Cref{raunch}, we choose disjoint subsets
$A=\{a_{1},\ldots, a_{q}\}$ and
$B=\{b_{1},\ldots, b_{q}\}$ of $H$
such that $a_{i}b_{j}\ne a_{p}b_{q}$ whenever
$(i,j)\ne (p,q)$.
Let $AB$ be $\{a_{i}b_{j}\colon 1\leq i,j\leq q\}$.
We can also assume that $AB$ is disjoint from $A\cup B$.
Let $G$ be a graph on vertex set $\{v_{1},v_{2},v_{3}\}$,
where there are $q$ parallel edges between
$v_{1}$ and $v_{2}$ and between $v_{2}$ and $v_{3}$,
and $q^{2}$ parallel edges between $v_{1}$ and $v_{3}$.
We let $\sigma$ be the gain function applying the
elements in $A$ to the $q$ arcs from
$v_{1}$ to $v_{2}$,
the elements in $B$ to the arcs from
$v_{2}$ to $v_{3}$, and the elements in $AB$
to those arcs from $v_{1}$ to $v_{3}$.
We identify these group elements with the ground set
of the \hgg\ matroid
$M=M(G,\sigma)$.
Therefore $M$ is a rank\dash $3$ matroid with ground set
$A\cup B\cup AB$.
Its non-spanning circuits are the $3$\dash element
subsets of $A$, $B$, or $AB$,
along with any set of the form
$\{a_{i},b_{j},a_{i}b_{j}\}$.

Let $(T,\varphi)$ be a decomposition of $M$ with the
property that if $U$ is any displayed set, then
$\sim_{U}$ has at most $K$ equivalence classes.
As in the proof of \Cref{apogee}, we let $e$ be an edge
of $T$ such that each of the displayed sets, $U_{e}$ and $V_{e}$,
contains at least $|E(M)|/3=(q^{2}+2q)/3$ elements.
We construct a complete bipartite graph with vertex set
$A\cup B$ and edge set $AB$,
where $a_{i}b_{j}$ joins $a_{i}$ to $b_{j}$.
We colour a vertex or edge red if it belongs to $U_{e}$,
and blue otherwise.
Without loss of generality, we will assume that
at least $q/2\geq p$ vertices in $A$ are red.

Assume that $B$ contains at least $p$ blue vertices.
We choose $p$ such vertices, and $p$ red vertices from $A$,
and let $G'$ be the graph induced by these $2p$ vertices.
There are $p^{2}$ edges in $G'$.
Assume that at least $p^{2}/2$ of them are red
(the case that at least $p^{2}/2$ of them are blue is
almost identical).
Our choice of $p$ means that $G'$ contains a
subgraph isomorphic to $K_{d,d}$ consisting of red edges.
Thus there are elements
$a_{i_{1}},\ldots, a_{i_{d}}\in A\cap U_{e}$ and
$b_{j_{1}},\ldots, b_{j_{d}}\in B\cap V_{e}$ such that
every element $a_{i_{p}}b_{j_{q}}$ is in $U_{e}$.
For $(l,k)\ne (p,q)$, we see that
$\{a_{i_{l}},a_{i_{l}}b_{j_{k}}\}$ is not equivalent
to
$\{a_{i_{p}},a_{i_{p}}b_{j_{q}}\}$, since
$\{a_{i_{l}},a_{i_{l}}b_{j_{k}},b_{j_{k}}\}$ is a circuit of $M$,
and $\{a_{i_{p}},a_{i_{p}}b_{j_{q}},b_{j_{k}}\}$ is a basis.
Therefore $\sim_{U_{e}}$ has at least
$d^{2}>K$ equivalence classes, and we have a contradiction.
We must now assume that $B$ contains fewer than
$p$ blue vertices, and hence at least $q-p\geq q/2$ red vertices.
Thus a symmetrical argument shows that $A$ contains fewer than
$p$ blue vertices.

We choose $q-p$ red vertices from each of $A$ and $B$, and
let $G''$ be the subgraph induced by these vertices.
Let $g$ stand for the number of blue edges in $G''$.
The number of edges not in $G''$ is equal to
$q^{2}-(q-p)^{2}=2pq-p^{2}$.
As there are $g$ blue edges in $G''$,
at most $2pq-p^{2}$ blue edges not in $G''$, and
fewer than $2p$ blue vertices, it follows that
$|V_{e}| < g+2pq - p^{2}+ 2p$.
Since
$(q^{2}+2q)/3\leq |V_{e}|$, we deduce that
\[
\frac{1}{3}(q^{2}+2q) - p(2q - p + 2) <g.
\]
Our choice of $q$ now means that $G''$ has a
subgraph isomorphic to $K_{d,d}$ consisting of
blue edges.
Thus we have elements
$a_{i_{1}},\ldots, a_{i_{d}}\in A\cap U_{e}$ and
$b_{j_{1}},\ldots, b_{j_{d}}\in B\cap U_{e}$ such that
$a_{i_{p}}b_{j_{q}}$ is in $V_{e}$ for each $p$ and $q$.
For $(l,k)\ne (p,q)$, we see that
$\{a_{i_{l}},b_{j_{k}}, a_{i_{l}}b_{j_{k}}\}$ is a circuit of $M$,
while $\{a_{i_{p}},b_{j_{q}},a_{i_{l}}b_{j_{k}}\}$ is a basis.
This implies there are at least $d^{2}$ equivalence classes under
$\sim_{U_{e}}$, so we again have a contradiction.
\end{proof}

\section{Open problems}

We have proved that the class of lattice path matroids is
pigeonhole, but we have not yet proved that it is strongly pigeonhole.
Nevertheless, we believe this to be the case.

\begin{conjecture}
\label{ulster}
The class of lattice path matroids is efficiently pigeonhole.
\end{conjecture}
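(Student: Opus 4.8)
The plan is to verify the three conditions of \Cref{yakuza-II} directly, combining the lattice path machinery of \Cref{zodiac,copeck} with the signature/compatibility strategy used for \Cref{rhesus,yuppie}. Fix $M=M[P,Q]$ and a set $U\subseteq E(M)$ with $\lambda_{M}(U)\leq\lambda$, and write $V=E(M)-U$. Identify $\{1,\ldots,m+r\}$ with the staircases of $G[P,Q]$ as in \Cref{copeck}, and partition this set into the maximal \emph{runs} of consecutive staircases lying entirely in $U$ and those lying entirely in $V$. Between consecutive runs we obtain staircases $R_{1},\ldots,R_{q}$ of $G[P,Q]$ (with one-sided ``ends'' $R_{0}$ and $R_{q+1}$). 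For a maximal $U$-run $S$ flanked by $R_{t-1}$ and $R_{t}$, and an independent set $X\subseteq U$, define the relation $\mathrm{Rel}_{S}(X)\subseteq\vertex(R_{t-1})\times\vertex(R_{t})$ to consist of those pairs $(v,w)$ for which some segment of an intermediate path runs from $v$ across the staircases of $S$ to $w$ and traverses by a North step every staircase of $S$ that belongs to $X$ (for the two outer $U$-runs, use the one-sided versions, exactly as in \Cref{zodiac}). The proposed signature $\mcal{C}(X)$ is the tuple of relations $\mathrm{Rel}_{S}(X)$ over all $U$-runs $S$, and we set $X\approx_{U}X'$ when both are dependent, or both are independent with $\mcal{C}(X)=\mcal{C}(X')$.

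Two of the three required properties should then be routine. Each relation $\mathrm{Rel}_{S}(X)$ is computable in polynomial time by the reachability and alternating-path methods already employed for \Cref{yabber,copeck}, so $\mcal{C}(X)$, and hence the test ``$X\approx_{U}X'$'', run in polynomial time. The refinement $\approx_{U}\,\subseteq\,\sim_{U}$ follows by a run-by-run concatenation argument parallel to the proof of \Cref{zodiac}: if $X\cup Z$ is independent for some $Z\subseteq V$, pick an intermediate path $L$ with $X\cup Z\subseteq N(L)$ and read off the state of $L$ on each boundary staircase $R_{t}$; since $Z\subseteq V$, the restriction of $L$ to each $U$-run $S$ is a segment witnessing membership of its endpoint pair in $\mathrm{Rel}_{S}(X)=\mathrm{Rel}_{S}(X')$, so we may replace that segment by one compatible with $X'$ having the same two boundary states, leaving every $V$-run of $L$ untouched. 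Splicing the pieces produces an intermediate path certifying that $X'\cup Z$ is independent, and symmetry gives $X\sim_{U}X'$.

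The remaining point — and the one I expect to be the genuine obstacle, explaining why the statement is still a conjecture — is to bound the number of distinct signatures by a function of $\lambda$ alone. The naive estimate $\prod_{S}2^{|\vertex(R_{t-1})|\cdot|\vertex(R_{t})|}$ is useless, because the widths of the boundary staircases are \emph{not} controlled by $\lambda_{M}(U)$: already for $M=U_{n,2n}$ and $U=\{n,n+1\}$ one has $\lambda_{M}(U)=2$ while the two flanking staircases each have about $n$ vertices. What must be exploited is that, even when $R_{t}$ is wide, the relation $\mathrm{Rel}_{S}(X)$ on an adjacent run is cheaply describable: on the wide interior of $R_{t}$ it is a fixed shift depending only on $|X\cap S|$ and the geometry of $M$, and it can vary with $X$, or differ between runs, only within a portion of $R_{t}$ of bounded width. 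Concretely, the plan is to extract from the rank function of $M$ near $U$ a bounded-complexity ``interface profile'' — the lattice path analogue of the rank-$\leq\lambda$ flat $\closure{U}\cap\closure{V}$ used in the proof of \Cref{rhesus} — to prove, using submodularity together with the interval presentation of lattice path matroids, that the total size of this profile is bounded by a function $g$ of $\lambda$, and to show that each $\mathrm{Rel}_{S}(X)$ factors through it. This would give $\pi(\lambda)\leq h(g(\lambda))$ for an explicit $h$, completing the verification of \Cref{yakuza-II}; \Cref{lounge-II} then upgrades the conclusion to the full fixed-parameter tractability statement, strengthening \Cref{hubcap} and establishing \Cref{ulster}.
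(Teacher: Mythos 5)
The statement you are attempting to prove is Conjecture~\ref{ulster}, which the paper explicitly leaves open. The authors prove only that lattice path matroids are pigeonhole (\Cref{zodiac}) and that parse trees can be constructed in fixed-parameter tractable time (\Cref{copeck}), and they obtain the model-checking result \Cref{hubcap} by routing through \cite[Proposition 6.1]{FMN-I} rather than through efficient pigeonholing. There is therefore no proof in the paper to compare against, and you should not expect to close this with the tools on display.

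Your proposal is an outline rather than a proof, and to your credit it locates the genuine obstruction precisely. The run-decomposition signature $\mcal{C}(X)=(\mathrm{Rel}_{S}(X))_{S}$ and the splicing argument that $\approx_{U}$ refines $\sim_{U}$ both look sound and faithfully extend the argument of \Cref{zodiac} from intervals to arbitrary $U$; computability per pair by reachability in the lattice graph is unproblematic. The gap is exactly where you flag it: \Cref{yakuza-II} demands that the number of $\approx_{U}$\dash classes be bounded by $\pi(\lambda)$, a function of $\lambda_{M}(U)$ alone. Your own $U_{n,2n}$ example shows the boundary staircases need not have width bounded in $\lambda_{M}(U)$; the paper's staircase bound, \Cref{jargon}, controls widths only via the global branch-width of $M$, which is precisely the crutch the strongly pigeonhole definition forbids. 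Worse, for arbitrary $U$ the number $q$ of maximal $U$\dash runs is also uncontrolled by $\lambda_{M}(U)$ --- already for $U_{1,n}$ with $U$ the odd-indexed elements one has $\lambda_{M}(U)=1$ while $q\approx n/2$, and your naive count then blows up like $k^{q}$ even though $\sim_{U}$ itself has at most three classes by \Cref{tabard}. So your proposed signature is provably too fine. The ``interface profile'' you sketch as a remedy --- a bounded-size object, analogous to the flat $\closure{U}\cap\closure{V}$ in the proof of \Cref{rhesus}, through which each $\mathrm{Rel}_{S}(X)$ should factor --- is the missing lemma, and nothing in the proposal establishes it. That single unproved bound is exactly why the statement is a conjecture rather than a theorem.
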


The classes of fundamental transversal matroids and lattice path
matroids are both closed under duality
(\cite[Proposition 11.2.28]{Oxl11} and \cite[Theorem 3.5]{BdMN03}).
Thus they belong to the intersection of transversal and
cotransversal matroids.
We suspect that \Cref{yuppie}
(and \Cref{ulster}) exemplify a more general result.

\begin{conjecture}
\label{iguana}
The class of matroids that are both transversal and
cotransversal is strongly pigeonhole.
\end{conjecture}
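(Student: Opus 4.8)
The plan is to generalize the proof of \Cref{yuppie}. That argument rested on three features of a fundamental transversal matroid $M[G]$: (i) a bipartite presentation $G$ on a vertex set $A\cup B$ with $B$ a basis, so that independence of a set is witnessed by a matching; (ii) the fact that $\lambda_{M}(U)\le\lambda$ forces the crossing subgraph $H$ (edges between $B\cap U$ and $A\cap V$, and between $B\cap V$ and $A\cap U$) to have no matching larger than $\lambda$, hence, by K\H{o}nig's Theorem, a vertex cover $S$ with $|S|\le\lambda$; and (iii) a \emph{signature} attached to each independent $X\subseteq U$ recording only the bounded information about how a certifying matching meets $S$, together with the gluing argument of \Cref{utopia} and \Cref{dialer} showing that the set of signatures of $X$ determines its $\sim_{U}$-class. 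For an arbitrary transversal matroid this breaks down at (ii): the class is not even pigeonhole (\Cref{hiccup}), because the crossing structure of a presentation across a small separation can be arbitrarily complicated. The role of the extra hypothesis ``$M^{*}$ is transversal'' is to restore a bounded crossing structure. Note that the class in question is self-dual, since duality interchanges transversal and cotransversal matroids, so a self-dual conclusion like strong pigeonholeness (recall pigeonholeness itself is self-dual, by \Cref{dynamo-II}) is at least plausible.

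Concretely, I would proceed as follows. First fix a maximal presentation of $M$ as a transversal matroid and a maximal presentation of $M^{*}$ as a transversal matroid; by the uniqueness theory of maximal presentations these are canonical, and the defining sets of their slots are governed by the lattices of cyclic flats of $M$ and of $M^{*}$. Fix $U\subseteq E(M)$ with $\lambda_{M}(U)\le\lambda$, and observe that $\lambda_{M^{*}}(U)=\lambda_{M}(U)\le\lambda$, so both presentations see a small separation. The crux is a structural lemma of the following shape: \emph{there is a function $f$ so that, for every matroid $M$ that is both transversal and cotransversal and every $U\subseteq E(M)$ with $\lambda_{M}(U)\le\lambda$, the active part of the separation --- made precise as a crossing bipartite graph built from the two maximal presentations, the correct definition of which is itself part of what must be worked out --- has a vertex cover of size at most $f(\lambda)$.} The intuition is that each piece of this crossing structure corresponds to a cyclic flat of $M$, or to the complement of a cyclic flat of $M$ (equivalently a cyclic flat of $M^{*}$), that genuinely straddles the partition $(U,E(M)-U)$, and that the two connectivity bounds --- on $r_{M}(U)+r_{M}(E(M)-U)-r(M)$ and on the dual quantity --- simultaneously limit how the cyclic flats of $M$ and those of $M^{*}$ can interleave across the separation. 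For the known subclasses this bound is already visible: it is a cover of size $\lambda$ for fundamental transversal matroids (\Cref{yuppie}), and it is essentially \Cref{jargon} for lattice path matroids.

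Granting the structural lemma, the rest should follow the template of \Cref{yuppie}, with the cover $S$ there replaced by the $f(\lambda)$-element cover supplied by the lemma. For an independent $X\subseteq U$ and a certifying matching one defines a signature recording the trace of the matching on this bounded cover together with which partial extensions across it into $E(M)-U$ are realisable; because $M$ is also cotransversal one records the analogous data for the maximal presentation of $M^{*}$ as well, so that membership of $X\cup Z$ in $\mcal{I}(M)$ --- which may be tested either as ``$X\cup Z$ is a partial transversal of the presentation of $M$'' or as ``the complement of $X\cup Z$ contains a transversal of the presentation of $M^{*}$'' --- is controlled from both sides. Letting $\mcal{C}(X)$ be the set of signatures over all certifying matchings, the number of possible $\mcal{C}(X)$ is bounded in terms of $f(\lambda)$ alone; and a gluing argument as in \Cref{utopia} and \Cref{dialer} --- splice a certifying matching for $X$ on the $U$-side with one for $Z$ on the $V$-side, matching up the traces on the cover --- shows that $\mcal{C}(X)=\mcal{C}(X')$ implies $X\sim_{U}X'$. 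This produces an equivalence relation refining $\sim_{U}$ with boundedly many classes, which is exactly the strong pigeonhole property.

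The main obstacle is the structural lemma: converting ``$M$ and $M^{*}$ are both transversal'' into a uniform bound on the complexity of a separation of bounded connectivity. I expect this to require a real understanding of how the lattices of cyclic flats of $M$ and of $M^{*}$ (related by complementation) sit relative to such a partition --- in effect a transversal-matroid analogue of the guts of a separation, controlled by the self-duality of the connectivity function. A secondary subtlety is that transversal presentations are not unique, so one must either work with canonical maximal presentations throughout or argue that the relevant bound is presentation-independent, in order for the signature machinery to be well defined.
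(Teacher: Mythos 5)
The statement you are addressing, \Cref{iguana}, is left as an open conjecture in the paper; there is no proof of it to compare against, so the question is only whether your proposal closes the gap. It does not, and you acknowledge this yourself, but it is worth being precise about how much is missing. The general program is sensible and almost certainly the intended line of attack: \Cref{iguana} is motivated exactly by \Cref{yuppie} and \Cref{zodiac}, which both live in the transversal-and-cotransversal class and are both proved by attaching a bounded signature to each independent set across a low-order separation. You correctly identify why the general transversal case fails (\Cref{hiccup}), why dual transversality might restore a bounded ``crossing structure,'' and that $\lambda_{M^*}(U)=\lambda_M(U)$ lets both presentations see a small separation. That intuition is sound.

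However, the heart of the argument is missing, and it is not merely unproved but not even formulated. The structural lemma you pose --- that ``the crossing bipartite graph built from the two maximal presentations'' has a vertex cover of size $f(\lambda)$ --- comes with your own caveat that ``the correct definition of which is itself part of what must be worked out.'' Without that definition there is nothing to feed into a K\H{o}nig-type argument, no finite object to cover, and no signature to define. The analogy to \Cref{yuppie} is also thinner than it looks at this key point: the matching bound in \Cref{hiatus} rests entirely on $B$ being a \emph{basis} of $M[G]$, so that $(B\cap U)\cup A_U$ and $(B\cap V)\cup A_V$ are immediately independent subsets of $U$ and $V$ of known ranks, and the connectivity inequality bounds the matching in one line. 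A maximal presentation of a general transversal matroid has no distinguished basis sitting on the $B$-side, so that inequality does not carry over, and the step where dual transversality is supposed to compensate is precisely the step left blank. The same applies to the gluing of \Cref{utopia} and \Cref{dialer}: splicing two matchings across the cover is literal in the $M[G]$ setting, but it is far from clear what the analogous splice is when the available structures are a maximal presentation of $M$ and a separate maximal presentation of $M^*$, whose interaction across the separation is the very thing that has not been analysed. In short, this is a reasonable research program with a conjecture-inside-a-conjecture where the proof should be; as a proof it has a genuine, central gap.
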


Despite the existence of examples as in \Cref{adagio},
we firmly believe the next conjecture.

\begin{conjecture}
\label{hubbub}
The class of bicircular matroids is efficiently pigeonhole.
Let $H$ be a finite group.
The class of \hgg\ matroids is efficiently pigeonhole.
\end{conjecture}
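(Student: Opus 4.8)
The plan is to verify \Cref{yakuza-II} directly, treating bicircular and \hgg\ matroids in parallel since the graph-theoretic arguments are essentially the same. For the succinct representation $\Delta$ I would use the underlying graph $G$ together with the list of balanced loops (bicircular case) or a table of gains $\sigma(e,u,v)\in H$ for each oriented edge (\hgg\ case). As $M$ is $3$\dash connected we may assume $G$ is connected with no isolated vertices, so $|V(G)|\le|E(G)|+1$ and this is genuinely succinct; an independence oracle is polynomial-time via the rank formula ($r_M(X)$ equals the number of vertices of $G[X]$ minus the number of balanced components of $G[X]$).

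Now fix a partition $(U,V)$ of $E(G)$ with $\lambda_M(U)\le\lambda$. The key structural input is \Cref{rouble}: the set $N$ of vertices lying in both $G[U]$ and $G[V]$ satisfies $|N|\le 14\lambda-12$. This is where $3$\dash connectivity is used, and \Cref{adagio} shows it genuinely fails without that hypothesis. With $N$ bounded, I would attach to each independent set $X\subseteq U$ a bounded-size \emph{signature}: the partition of $N\cap V(G[X])$ into the traces of the connected components of $G[X]$; for each such component a flag recording whether it is balanced or unbalanced; and, in the \hgg\ case, for each balanced component and each pair of its $N$\dash vertices, the common gain value of any path between them within that component, which is well defined by \Cref{quahog}. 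Balance of a component (and hence the whole signature) is computable in polynomial time by fixing a spanning tree, switching so that its edges carry the identity of $H$, and reading off the remaining labels, using \cite[Lemma 5.3]{Zas89}. Declare $X\approx_U X'$ when both are dependent, or both are independent with equal signatures. Since $|H|$ is treated as a constant and $|N|$ is bounded by a function of $\lambda$, the number of possible signatures --- hence the number of $(\approx_U)$\dash classes --- is bounded by some $\pi(\lambda)$, and the running-time condition of \Cref{yakuza-II} is immediate.

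The real content is that $\approx_U$ refines $\sim_U$. Suppose $X,X'\subseteq U$ are independent with equal signatures and $X'\cup Z$ is dependent for some $Z\subseteq V$; the goal is that $X\cup Z$ is dependent. Choose a circuit $C'\subseteq X'\cup Z$ meeting both $X'$ and $Z$. Equality of signatures matches the components of $G[X]$ and $G[X']$ by their $N$\dash traces, preserves the balanced/unbalanced flags, and preserves the gain data on balanced components. For each component $D'$ of $G[X']$ meeting $C'$ I would surgically replace the pieces of $C'\cap X'$ lying in $D'$ by connected subgraphs of the matched component $D$ of $G[X]$ spanning the same $N$\dash vertices --- choosing an unbalanced subgraph whenever the replaced piece contained an unbalanced cycle, which connectivity of $D$ allows. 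This produces $C\subseteq X\cup Z$ with $G[C]$ connected, and dependence of $C$ would be established by tracking $\nu(\Gamma):=|E(\Gamma)|-|V(\Gamma)|$, using the identity $\nu(\Gamma)=\nu(\Gamma[L])+\nu(\Gamma[R])+\gamma$ for an edge partition $(L,R)$ with $\gamma$ vertices meeting both sides, to conclude $\nu(G[C])\ge\nu(G[C'])$. If this quantity is at least $1$ then $G[C]$ is a connected subgraph containing two cycles, so $C$ is dependent; otherwise $\nu(G[C'])=0$ forces $G[C']$ to be a balanced cycle, and the preserved gain values then guarantee that $G[C]$ again contains a balanced cycle.

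The step I expect to be the obstacle is exactly this component-by-component surgery: one must simultaneously keep the new edge set inside $X\cup Z$, keep precisely the right $N$\dash vertices shared with $G[Z]$, ensure the nullity does not drop, and carefully dispose of the degenerate ``$\nu=0$'' case --- essentially vacuous for bicircular matroids, where no balanced cycle has more than one edge, but requiring the gain-value bookkeeping of \Cref{quahog} in the \hgg\ case. Everything preceding that --- the succinct representation, polynomial-time computability of signatures, and the bound on the number of $(\approx_U)$\dash classes --- is routine once \Cref{rouble} is in hand.
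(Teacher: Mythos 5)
The statement you have been asked to prove is \Cref{hubbub}, which is a \emph{conjecture} in the paper, not a theorem: it asserts the efficiently pigeonhole property for the \emph{entire} class of bicircular matroids (and of \hgg\ matroids, $H$ finite), with no connectivity restriction. The paper explicitly leaves this open. What the paper \emph{does} prove is \Cref{nibble}, which restricts to the $3$\dash connected members of those classes, and your proposal is in substance a faithful reconstruction of that proof. But you silently insert the hypothesis ``As $M$ is $3$\dash connected\dots'' near the start, and the rest of your argument leans on it irrevocably.

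The gap is not cosmetic. Your entire bound on the number of $(\approx_U)$\dash classes comes from \Cref{rouble}, which bounds $|N|$ (the set of boundary vertices) in terms of $\lambda_M(U)$ \emph{only under $3$\dash connectivity}. You even cite \Cref{adagio} yourself, which exhibits $1$\dash separations $(U,V)$ of frame matroids for which $|N|$ is unbounded. Without a bound on $|N|$, the signature you define is not of bounded size, so the number of classes is not controlled by any $\pi(\lambda)$, and the core requirement of \Cref{yakuza-II} fails. So what you have is a correct proof of \Cref{nibble}, not of \Cref{hubbub}. To prove the conjecture one would need a genuinely new idea for the non\dash$3$\dash connected case --- for instance, a different invariant that distinguishes $\sim_U$\dash classes without passing through a bound on boundary vertices, or a reduction showing that low-connectivity separations in bicircular/\hgg\ matroids can be handled directly. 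Neither appears in your proposal, and neither is present in the paper; that is precisely why the authors state \Cref{hubbub} as a conjecture and instead route around it by invoking \cite[Theorem 6.11]{FMN-I} together with \Cref{nibble} and minor-compatibility (\Cref{lichen}) to obtain the algorithmic consequence they actually need.
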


\section{Acknowledgements}
We thank Geoff Whittle for several important conversations, and
Yves de Cornulier for suggesting a proof of \Cref{raunch}.
Funk and Mayhew were supported by a Rutherford Discovery Fellowship,
managed by Royal Society Te Ap\={a}rangi.
We also thank the referee for their helpful comments.

%\bibliographystyle{/Users/mayhew/Documents/LaTeX/Bibliography/MRStyle}
%\bibliography{/Users/mayhew/Documents/LaTeX/Bibliography/References}

%\bibliographystyle{/Users/mayhew/Maths/LaTeX/Bibliography/MRStyle}
%\bibliography{/Users/mayhew/Maths/LaTeX/Bibliography/References}

\end{document}